\theoremstyle{plain} \numberwithin{equation}{section}
\newtheorem{theorem}{Theorem}[section]
\newtheorem{lemma}[theorem]{Lemma}
\newtheorem{proposition}[theorem]{Proposition}
\theoremstyle{definition}
\newtheorem{definition}[theorem]{Definition}
\newtheorem{remark}[theorem]{Remark}
\newtheorem{example}[theorem]{Example}
\newtheorem{notation}[theorem]{Notation}
\DeclareMathOperator{\lcm}{lcm}
\DeclareMathOperator{\pd}{pd}
\DeclareMathOperator{\reg}{reg}
\DeclareMathOperator{\tor}{Tor }
\title{Projective dimension of hypergraphs }
\author[Lin]{Kuei-Nuan Lin}
\address{Department of Mathematics, The Penn State University, Greater Allegheny Campus,  McKeesport, PA}
\email{kul20@psu.edu}
\author[Mapes]{Sonja Mapes}
\address{Department of Mathematics, University of Notre Dame,  Notre Dame, IN}
\email{smapes1@nd.edu}
\keywords{hypergraphs, projective dimension, monomial ideals}
\subjclass[2010]{13D02, 05E40}
\date{\today}
\begin{document}
\maketitle

\begin{abstract}
Given a square-free monomial ideal $I$, satisfying certain hypotheses, in a polynomial ring $R$ over a field $\mathbb{K}$, we compute the projective dimension of $I$. Specifically, we focus on the cases where the 1-skeleton of an associated hypergraph is either a string or a cycle. We investigate the impact on the projective dimension  when higher dimensional edges are removed. We prove that the higher dimensional edge either has no effect on the projective dimension or the projective dimension only goes up by one with the extra higher dimensional edge. 
\end{abstract}

\section{Introduction}

Let $R = \mathbb{K}[x_1, \dots, x_n]$ be a polynomial ring over a field $\mathbb{K}$.  The minimal free resolution of $R/I$ for an ideal $I\subset R$ is an exact sequence of the form 
	\[
	0  \rightarrow  \bigoplus_j S(-j)^{\beta_{p,j}(R/I)}  \rightarrow \dots  \rightarrow  \bigoplus_j S(-j)^{\beta_{1,j}(R/I)}  \rightarrow  R \rightarrow  R/I  \rightarrow  0
		\]
The exponents $\beta_{i,j}(R/I)$ are invariants of $R/I$, called the Betti numbers of $R/I$. In general, finding Betti numbers is still a wide open question. Two other invariants that one can associate to a minimal resolution are the projective dimension of $R/I$, denoted $\pd(R/I)$, which is defined as follows
	\begin{align*}
		\pd(R/I) &= \max\{ i \mid \beta_{i,j}(R/I) \neq 0\},
	\end{align*}
and the (Castelnuovo-Mumford) regularity of $R/I$, denoted reg($R/I$), which is defined as follows
	\begin{align*}
		\mbox{reg}\,(R/I) &= \max\{ j-i \mid \beta_{i,j}(R/I) \neq 0\}.
	\end{align*}
 
 Those two invariants play important roles in algebraic geometry, commutative
algebra, and combinatorial algebra. In general, one finds the graded
minimal free resolution of an ideal to obtain those invariants, but
the computation can be difficult and computationally
expensive.  

Kimura, Terai and Yoshida define the dual hypergraph of a square-free monomial ideal in order to compute
its arithmetical rank \cite{KTY}(see \Cref{HGDef} for definition of a hypergraph). Since then, there have been a couple of papers using this combinatorial object to
study various properties, for example, \cite{HaLin} and \cite{LMc}. In
particular, Lin and Mantero use it to show that ideals with the same dual hypergraph
have the same Betti numbers and projective dimension \cite{LMa1} (\Cref{LaundryList1} \ref{bettiNumbersSame}), which has found use in
other papers, such as in \cite{KMa}. 

The focus of this work is to use the properties of a hypergraph to compute the projective dimension of the associted square-free monomial ideal without finding the minimal free resolution of the ideal. This is a different focus than various work by others which produce the full resolution, for example, the recent work of Eagon, Miller, and Ordog \cite{EMO}. More precisely, we find the projective dimension of hypergraphs when their 1-skeleton is a string or cycle. This extends the work of Lin and Mantero in \cite{LMa1}. A key development is the result of Lin and Mapes in \cite{LMap}, which allows us to remove a large class of higher dimensional edges of a hypergraph without impacting its projective dimension (Corollary 4.4 \cite{LMap}).  Given this previous result we can restrict our study in this paper to specific cases which are not previously dealt with.  Our main results are summarized by the following theorem. 

\begin{theorem}
Let $\mathcal{H}$ be a hypergraph and $F$ be an edge of $\mathcal{H}$ such that $\mathrm{dim}F>0$.
\begin{enumerate}
\item Let $\mathcal{H}_S$ be an open string of length $\mu$. If $\mathcal{H}$ is the union of $\mathcal{H}_S$ and $F$ such that all vertices of $F$ are on $\mathcal{H}_S$, then $\pd(\mathcal{H})=\pd(\mathcal{H}_S)=\mu-\left\lfloor \frac{\mu}{3}\right\rfloor$ or $\pd(\mathcal{H})=\pd(\mathcal{H}_S)+1$ depending on the position of vertices of $F$.
\item Let $\mathcal{H}_{C_\mu}$ be an open cycle of length $\mu$. If $\mathcal{H}$ is the union of $\mathcal{H}_{C_\mu}$ and $F$ such that all vertices of $F$ are on $\mathcal{H}_{C_\mu}$, then $\pd(\mathcal{H})=\pd(\mathcal{H}_{C_\mu})=\mu-1-\left\lfloor \frac{\mu-2}{3}\right\rfloor$.
\end{enumerate}
\end{theorem}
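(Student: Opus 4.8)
The plan is to translate the statement into one about the squarefree monomial ideal $I_\mathcal{H}$ associated to $\mathcal{H}$ and to argue by induction on the length $\mu$, running in parallel with the computations of $\pd(\mathcal{H}_S)$ and $\pd(\mathcal{H}_{C_\mu})$ already carried out in \cite{LMa1}. Throughout I would freely replace ideals by ones with the same dual hypergraph, as permitted by \Cref{LaundryList1}\ref{bettiNumbersSame}, so as to keep generators convenient, and I would exploit that adding or deleting the single higher-dimensional edge $F$ changes $I_\mathcal{H}$ in one controlled minimal generator.

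First I would use Corollary 4.4 of \cite{LMap}: for a higher-dimensional edge $F$ whose vertices lie in the positions covered by that result, $F$ may be deleted from $\mathcal{H}$ without changing the projective dimension, so $\pd(\mathcal{H})=\pd(\mathcal{H}_S)=\mu-\lfloor \mu/3\rfloor$ (resp. the cycle value) follows at once from \cite{LMa1}. It then remains to treat the finitely many ``types'' of placement of the vertices of $F$ on $\mathcal{H}_S$ (resp. $\mathcal{H}_{C_\mu}$) not covered by that corollary. For such an $F$, writing $I_\mathcal{H}$ and $I_{\mathcal{H}_S}$ so that they differ in one distinguished minimal generator $m$ coming from $F$ produces the short exact sequence
$$0 \longrightarrow \bigl(R/(I_{\mathcal{H}_S}:m)\bigr)(-\deg m) \longrightarrow R/I_{\mathcal{H}_S} \longrightarrow R/I_{\mathcal{H}} \longrightarrow 0,$$
whence $\pd(\mathcal{H}) \le \max\{\pd(\mathcal{H}_S),\ \pd(R/(I_{\mathcal{H}_S}:m))+1\}$, with equality to the maximum whenever the two quantities on the right differ. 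I would identify the colon ideal $(I_{\mathcal{H}_S}:m)$ with the ideal of an explicit sub-hypergraph, essentially $\mathcal{H}_S$ with the vertices of $F$ and the edges they meet removed, which is a disjoint union of shorter open strings; its projective dimension is then read off from the string formula by the inductive hypothesis.

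The heart of the argument is comparing $\pd(R/(I_{\mathcal{H}_S}:m))$ with $\pd(\mathcal{H}_S)$ through the explicit bookkeeping of the triples that produce $\lfloor \mu/3\rfloor$. In the placements where the deleted sub-hypergraph still has projective dimension $\pd(\mathcal{H}_S)$, the exact sequence only gives $\pd(\mathcal{H}) \le \pd(\mathcal{H}_S)+1$, and I would obtain the matching lower bound by exhibiting a nonzero multigraded Betti number $\beta_{\pd(\mathcal{H}_S)+1,\,\mathbf{a}}(R/I_\mathcal{H})\neq 0$ for a suitable multidegree $\mathbf{a}$, via a Hochster/Lyubeznik-type computation on the associated LCM lattice in which $F$ is precisely what creates the extra homology; this yields the ``$+1$'' conclusion. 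In the remaining placements the deleted sub-hypergraph has strictly smaller projective dimension than $\mathcal{H}_S$, so $F$ contributes nothing beyond $\pd(\mathcal{H}_S)$ and we get $\pd(\mathcal{H})=\pd(\mathcal{H}_S)$; the position-dependence in part (1) is exactly this dichotomy.

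For the cycle in part (2), the claim is that no exceptional branch occurs: an open cycle has no free ends, so after deleting the vertices of $F$ the colon hypergraph is always an open string short enough that its projective dimension stays strictly below $\pd(\mathcal{H}_{C_\mu})=\mu-1-\lfloor(\mu-2)/3\rfloor$, which forces $\pd(\mathcal{H})=\pd(\mathcal{H}_{C_\mu})$ for every admissible $F$; combined with the value from \cite{LMa1} this finishes (2). The main obstacle is the case analysis in part (1): one must enumerate, up to symmetry, how the vertices of $F$ can sit relative to the ends of the string and relative to the triples governing the floor function, compute the projective dimension of the colon hypergraph in each case, and, in the cases where a jump occurs, construct the explicit nonvanishing top Betti number certifying that the increase is exactly one. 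This is a finite but delicate piece of bookkeeping, and getting the ``boundary'' configurations right is where the real work lies.
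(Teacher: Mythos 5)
Your argument rests on a misreading of the Kimura--Terai--Yoshida construction used here: the higher dimensional edge $F$ corresponds to a \emph{variable} $x_F$, not to a minimal generator. Passing from $\mathcal{H}_S$ to $\mathcal{H}=\mathcal{H}_S\cup F$ keeps the same $\mu$ generators and simply multiplies the $k$ generators indexed by the vertices of $F$ by $x_F$; with this natural choice of variables one has $I_{\mathcal{H}}\subsetneq I_{\mathcal{H}_S}$, and $I_{\mathcal{H}}$ is never of the form $I_{\mathcal{H}_S}+(m)$ for a single monomial $m$. Consequently your short exact sequence, with $R/I_{\mathcal{H}_S}$ in the middle and $R/I_{\mathcal{H}}$ as cokernel, does not exist. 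The sequence that is actually available is $0\to R/(I_{\mathcal{H}}:x_F)(-1)\to R/I_{\mathcal{H}}\to R/(I_{\mathcal{H}},x_F)\to 0$, where $I_{\mathcal{H}}:x_F$ recovers the string (or cycle) and $(I_{\mathcal{H}},x_F)$ corresponds to deleting the vertices of $F$; this is the sequence used in \Cref{String2k-1} and \Cref{cycleF}, and it only settles the cases where the deleted hypergraph has strictly smaller projective dimension.

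Even after correcting the sequence, your proposed dichotomy is false, and the hard lower bound is not supplied. Writing $n_i=3l_i+r_i$ for the gaps between vertices of $F$, the deleted hypergraph is a disjoint union of strings with $\pd=\sum_i\bigl(n_i-\left\lfloor \frac{n_i}{3}\right\rfloor\bigr)$, and this equals $\pd(\mathcal{H}_S)$ whenever $\sum_i r_i\geq 2k$; yet the jump occurs only when $\sum_i r_i=2k$ and every $r_i\neq 0$ (\Cref{StringedgeAll}). For instance $k=2$ with $n_1=n_2=n_3=2$ gives equality of the two projective dimensions, but $\pd(\mathcal{H})=\pd(\mathcal{H}_S)$ by \Cref{String2k+1}, contradicting the ``$+1$'' your criterion predicts; these equality cases cannot be decided by the $x_F$-sequence at all, and the paper handles them with vertex colon sequences and induction on $k$ (\Cref{String2k+1}, \Cref{String2k}). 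For the genuine jump case the needed nonvanishing Betti number is not a routine Hochster/LCM-lattice remark: it is obtained through \Cref{Stringedge}--\Cref{SubStinky} via the partial Betti splitting \Cref{Split}, which is precisely the content your sketch omits. The same issue undermines part (2): deleting $V_F$ from the cycle yields $k$ strings whose projective dimension is \emph{not} always strictly below $\pd(\mathcal{H}_{C_\mu})$ (again, all $r_i=2$ gives equality), which is why \Cref{CycleLarge} must be proved separately, using the string results, before the colon-by-$x_F$ argument of \Cref{cycleF} can close the remaining cases.
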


In section 2 of the paper we begin by giving the necessary background to understand the results of this paper.  We also establish a new technique for computing the projective dimension using bounds on sub-ideals, which is inspired by methods
from \cite{fatabbi}, namely Betti splittings (\Cref{Split}).  We then proceed with our
results concerning higher dimensional edges on strings and  cycles in \Cref{StringsSec}, and \Cref{CyclesSec}. Through out this paper, ideals are monomial ideals in a polynomial ring $R$ over the field $\mathbb{K}$.

\section{Preliminaries}\label{prelimSec}

\subsection{Hypergraph of a square-free monomial ideal}

Kimura, Terai, and Yoshida associate a square-free monomial ideal with
a hypergraph in \cite{KTY}, see Definition \ref{HGDef}.  Note that
this construction is different from the constructions associating ideals
to hypergraphs coming from the study of edge ideals.  In particular
relative to edge ideals, the hypergraph of Kimura, Terai, and Yoshida
might be more aptly named the ``dual hypergraph''.   The construction
of the dual hypergraph is first introduced by Berge in \cite{Berge}. In
the edge ideal case, one associates a square-free monomial with a
hypergraph by setting variables as vertices and each monomial
corresponds to an edge of the hypergraph (see for example
\cite{Ha}). In the following definition, we actually associate
variables with edges of the hypergraph and vertices with the monomial generators
of the ideal, and in practice this is the dual hypergraph of the
hypergraph in the edge ideal construction.

\begin{definition}\label{HGDef}
Let $I$ be a square-free monomial ideal in a polynomial ring with $n$ variables with minimal monomial
generating set $\{m_1, \dots, m_\mu\}$.  Let $V$ be the set
$\{1,\dots, \mu\}$.  We define $\mathcal{H}(I)$ (or $\mathcal{H}$ when
$I$ is understood) to
be the hypergraph associated to $I$ which is defined as $\{\{j \in V :
x_i|m_j\} : i = 1,2,\dots, n\}$.  We call the sets $\{j \in V :
x_i|m_j\}$ the \emph{edges} of the hypergraph. 
\end{definition}

Note that if you start with a hypergraph you can create a monomial ideal by assigning a variable to each edge, then each vertex (or element in $V$) would be assigned the monomial product of the variables corresponding the the edges using that vertex.  The issue however that doing this will not always produce a minimal generating set. To obtain a minimal generating set the hypergraph needs to be separated, where $\mathcal{H}$ is
\emph{separated} if in addition for every $1 \leq j_1 < j_2 \leq
\mu$, there exist edges $F_1$ and $F_2$ in $\mathcal{H}$ so that $j_1
\in F_1 \cap (V-F_2)$ and  $j_2
\in F_2 \cap (V-F_1)$.  All hypergraphs in this paper will be separated unless otherwise stated. 

\begin{example} \label{bigStringEx}
Let \begin{align*}
    I=&(m_1=abk,m_2=bcl,m_3=cdklm,m_4=dekn,\\& m_5=efgn,m_6=ghmn,m_7=hikl,m_8=ijk)
\end{align*} the \Cref{FirstHG} is the hypergraph associated to $I$ via the \Cref{HGDef} where
\begin{align*}&\mathcal{H}(I)=\{a=\{1\},b=\{1,2\},c=\{2,3\},d=\{3,4\},e=\{4,5\},f=\{5\},g=\{5,6\},h=\{6,7\},\\
&i=\{7,8\},j=\{8\},k=\{1,3,4,7,8\},l=\{2,3,7\},m=\{3,6\},n=\{4,5,6\}\}.
\end{align*}

\begin{figure}[h] 
\caption{}\label{FirstHG}
\begin{center}
\begin{tikzpicture}[thick, scale=0.8]

\shade [shading=ball, ball color=black]  (3,-1.5) circle (.1) node [left] {$\textcolor{blue}8$} node [below] {$j$};
\draw  [shape=circle] (4,-1) circle (.1) node [below] {$\textcolor{blue}7$};
\draw  [shape=circle] (5,-0.5) circle (.1) node [below] {$\textcolor{blue}6$};
\shade [shading=ball, ball color=black] (6,0) circle (.1) node [above] {$\textcolor{blue}5$} node [right] {$f$};
\draw  [shape=circle] (5,0.5) circle (.1) node [above] {$\textcolor{blue}4$};
\draw  [shape=circle] (4,1) circle (.1)  node [above] {$\textcolor{blue}3$};
\draw  [shape=circle] (3,1.5) circle (.1) node [above] {$\textcolor{blue}2$};
\shade [shading=ball, ball color=black]  (2,1) circle (.1)  node [left] {$a$}  node [above] {$\textcolor{blue}1$};

\draw [line width=1.2pt] (3,-1.5)--(4,-1) node [pos=.5, below] {$i$};
\draw [line width=1.2pt ] (5,-0.5)--(4,1) node [pos=.3, below] {$m$};
\draw [line width=1.2pt ] (4,-1)--(5,-0.5) node [pos=.5, below] {$h$};
\draw [line width=1.2pt] (5,-0.5)--(6,0) node [pos=.5, below] {$g$};
\draw [line width=1.2pt] (6,0)--(5,0.5) node [pos=.5, above] {$e$};
\draw [line width=1.2pt] (5,0.5)--(4,1) node [pos=.5, above] {$d$};
\draw [line width=1.2pt] (4,1)--(3,1.5) node [pos=.5, above] {$c$};
\draw [line width=1.2pt] (3,1.5)--(2,1) node [pos=.5, above] {$b$};
\path [pattern=north east lines, pattern  color=blue]   (3,-1.5)--(4,-1)--(5,0.5)--(4,1)--(2,1)--cycle;
\path [pattern=north west lines, pattern  color=green]   (6,0)--(5,-0.5)--(5,0.5)--cycle;
\path [pattern=north west lines, pattern  color=purple]   (4,-1)--(3,1.5)--(4,1)--cycle;

\path (3,0.5)--(3,0.5) node [pos=.5, below ] {$k$ };
\path (3.2,1.35)--(3.5,1.35) node [pos=.5, below ] {$l$ };
\path (5,0)--(6,0) node [pos=.5 ] {$n$ };

\end{tikzpicture}
\end{center}
\end{figure}
\end{example}

Some important terminology regarding these hypergraphs is the
following.  We say a vertex $i \in V$ of $\mathcal{H}$ is an
\emph{open vertex} if $\{i\}$ is not in $\mathcal{H}$, and otherwise
$i$ is \emph{closed}.  In Figure \ref{FirstHG}, we can see that the
vertices labeled by $a, f$ and $j$ are all closed, and the rest
are open.   Moreover, a hypergraph $\mathcal{H}$ with $V=[\mu]$ is a
\emph{string} if $\{i, i+1\}$ is in $\mathcal{H}$ for all $i = 1,
\dots, \mu-1$, and the only edges containing $i$ are $\{i-1, i\}$,
$\{i, i+1\}$ and possibly $\{i\}$.  We say that a string is an
  \emph{open string} if all vertices other than $1$ and $\mu$ are
  open (Note that for $\mathcal{H}$ to be separated $1$ and $\mu$ must be closed).  Also,
  $\mathcal{H}$ is a \emph{$\mu$-cycle} if $\mathcal{H} = \tilde{\mathcal{H}}
  \cup \{\mu,1\}$ where $\tilde{\mathcal{H}}$ is a string.  We say a
  cycle is an \emph{open cycle} if all the vertices are open. In Figure \ref{FirstHG} if we were to remove the edges corresponding to the variables $k, l, m,$ and $n$ then we would be left with a string.  If we further removed the edge corresponding to the variable $f$, then it would be an open string. Let  $\mathcal{H}^{i} = \{ F \in H : |F| \leq i +1\}$ denote the $i$-th dimensional subhypergraph of $\mathcal{H}$ where $|F|$ is the cardinality of the $F$. We call $\mathcal{H}^{1}$ the 1-skeleton of $\mathcal{H}$.

  \begin{example}
  Some simple examples where $V = \{1,2,3\}$ of open strings and open cycles are as follows.  Let $\mathcal{H}_1 = \{a = \{1\}, b= \{1,2\}, c=\{2,3\}, d=\{3\}\}$, then $\mathcal{H}_1$ is an open string and the corresponding monomial ideal is $I_1 = (ab, bc, cd)$. If we let $\mathcal{H}_2 = \{a=\{1,2\}, b=\{2,3\}, c=\{1,3\}\}$, then this is an open cycle and the corresponding monomial ideal is $I_2 = (ac,ab,bc) $.   
  \end{example}

Recently there have been a number of results determining both the
projective dimension and the regularity of certain square-free monomial ideals
from the associated hypergraph.  As this paper focuses more on the projective dimension, we include the statements of some of results that are useful for the rest of the paper here
(these appear separately in the literature but we list them all
here as part of one statement). We write $\pd(\mathcal{H})=\pd(\mathcal{H}(I))=\pd(R/I)$ and $\reg(\mathcal{H})=\reg(\mathcal{H}(I))=\reg(R/I)$ where $\mathcal{H}(I)$ is the hypergraph obtained from a square-free monomial ideal $I$. We do this for the Betti numbers as well. Notice that for the content of this work, we assume there is a unique variable associated to each face of $\mathcal{H}$.

\begin{theorem}\label{LaundryList1}
\hangindent\leftmargini
\hskip\labelsep
\begin{enumerate}[label={(\arabic*)},ref={(\arabic*)}]
\item \label{bettiNumbersSame} \cite[Proposition 2.2]{LMa1} If $I_1$
  and $I_2$ are square-free monomial ideals associated to the
same separated hypergraph $\mathcal{H}$, then the total Betti numbers of the two ideals coincide.
\item \label{removeMeets} \cite[Corollary 4.4]{LMap} Let $F$ be an edge on the hypergraph $\mathcal{H}$. If $F$ is an union of other edges of $\mathcal{H}$, then $\pd(\mathcal{H})=\pd(\mathcal{H}\backslash F)$. 
\item  \label{LMa2-2.9c} \cite[Theorem 2.9 (c)]{LMa2}) If
  $\mathcal{H}' \subseteq \mathcal{H}$ are hypergraphs with
  $\mu(\mathcal{H}') = \mu(\mathcal{H})$, then $\pd(\mathcal{H}')
  \leq \pd(\mathcal{H})$ where $\mu(\mathcal{*})$ denotes the number of vertices of $*$.
 \item  \label{pdFormulaString} \cite[Theorem 7.7.34, Corollary 7.7.35]{Jacques} An open string hypergraph with $\mu$ vertices has
projective dimension $\mu-\left\lfloor \frac{\mu}{3}\right\rfloor $, regularity $\left\lceil \frac{\mu}{3}\right\rceil $, and $\beta_{\mu-\left\lfloor \frac{\mu}{3}\right\rfloor ,\mu}(\mathcal{H})\neq0$ when ${\mu}(\mathcal{H})\neq0$.
% \item \label{BettiString}\cite[Theorem 7.7.34]{Jacques} An open string hypergraph $\mathcal{H}$ with $\mu$ vertices has and $\beta_{\mu-\left\lfloor \frac{\mu}{3}\right\rfloor ,\mu}(\mathcal{H})\neq0$ when ${\mu}(\mathcal{H})\neq0$.
\item \label{BettiString2}\cite[Corollary 2.2]{JK} A hypergraph $\mathcal{H}$ with $n_1+n_2$ vertices that is a disjoint union of open string hypergraphs, $\mathcal{H}_1$ and $\mathcal{H}_2$ with $n_1$ and $n_2$ vertices has $\pd(\mathcal{H})=n_1+n_2-\left\lfloor \frac{n_1}{3}\right\rfloor -\left\lfloor \frac{n_2}{3}\right\rfloor$ and $\beta_{n_1+n_2-\left\lfloor \frac{n_1}{3}\right\rfloor -\left\lfloor \frac{n_2}{3}\right\rfloor,n_1+n_2}(\mathcal{H})\neq0$.
\item \label{pdFormulaCycle} \cite[Corollary 7.6.30]{Jacques} If $\mathcal{H}$ is an open cycle with $\mu$ vertices then
$\pd({\mathcal{H}}) = \mu - 1- \lfloor \frac{\mu -2}{3} \rfloor$. 
\item \label{RegFormula}\cite[Corollary 20.19]{Eisenbud} If $I=mI'$ where $m$ is a monomial of degree
$r$ then $\pd(R/I)=\pd(R/I')$, $\reg(R/I)=r+\reg(R/I')$.
\item \label{RegFormula2} \cite[Corollary 20.19]{Eisenbud} Let $m$ be a monomial of degree
$r$, then  
$$\reg(R/(I,m))\leq\max\{\reg(R/I),\reg(R/(I:m)+r-1\}.$$
\end{enumerate}
\end{theorem}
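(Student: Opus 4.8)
The statement is a compilation of results already established in the cited literature, so the plan is to verify each item against its reference and, where a self-contained argument is wanted, to organize the eight assertions around the two homological principles that drive them: the invariance of total Betti numbers under the combinatorial data of the hypergraph, and the behavior of $\pd$ and $\reg$ under short exact sequences and tensor products. I would group the parts accordingly rather than treat them in isolation.

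For parts (1)--(3) the common engine is that the total Betti numbers of $R/I$ are determined by combinatorial data attached to $\mathcal{H}$ rather than by the particular variables chosen. Part (1) follows because two square-free monomial ideals with the same separated hypergraph have isomorphic lcm-lattices, and by the Gasharov--Peeva--Welker theory (equivalently, by Hochster's formula on the Stanley--Reisner complex) the total Betti numbers depend only on this lattice; I would reproduce the bijection between edges of $\mathcal{H}$ and the relevant order complexes. Part (2) is a deletion statement: when the generator-support of an edge $F$ is a union of the supports of other edges, the corresponding variable is redundant at the level of the lcm-lattice, so removing it leaves the lattice, and hence $\pd$, unchanged; I would make this precise through the mapping-cone comparison isolated in \cite{LMap}. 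Part (3) is monotonicity of $\pd$ under passing to a subhypergraph on the same vertex set, which I would derive from the same lattice comparison together with the observation that deleting an edge can only simplify, never enlarge, the homology computing the top nonzero Betti number.

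Parts (4)--(6) form the genuinely computational core. By part (1) I may replace an open string (resp.\ open cycle) with any convenient realizing ideal; the natural choice makes $R/I$ the quotient by the edge ideal of a path $P_\mu$ (resp.\ a cycle $C_\mu$), after which the formulas $\pd = \mu - \lfloor \mu/3 \rfloor$ and $\pd = \mu - 1 - \lfloor (\mu-2)/3 \rfloor$ are exactly the path and cycle computations of \cite{Jacques}. I would establish these by induction on $\mu$, peeling off a leaf vertex and using a short exact sequence (a Betti splitting) relating $P_\mu$ to $P_{\mu-1}$, $P_{\mu-2}$, and $P_{\mu-3}$; the floor functions arise from the three-step periodicity of this recursion, and I would track the top-degree Betti number alongside to certify nonvanishing. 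Part (5) then follows formally: a disjoint union of open strings corresponds to a sum of ideals in disjoint variable sets, so $R/I$ is the tensor product of the two quotients, $\pd$ is additive, and the two individual formulas add, with nonvanishing of the top Betti number inherited through the K\"unneth formula.

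Finally, parts (7) and (8) are the standard homological bookkeeping for monomials. For (7), the graded isomorphism $I \cong I'(-r)$ given by multiplication by $m$ shows that minimal free resolutions of $R/I$ and $R/I'$ agree up to an internal shift by $r = \deg m$, which fixes $\pd$ and raises $\reg$ by $r$. For (8), I apply the standard estimate $\reg C \le \max\{\reg B,\ \reg A - 1\}$ for a short exact sequence $0 \to A \to B \to C \to 0$ to $0 \to (R/(I:m))(-r) \to R/I \to R/(I,m) \to 0$, where the shift by $r$ in the first term produces the $r-1$ in the displayed bound. The main obstacle is plainly parts (4) and (6): extracting the \emph{exact} floor-function formulas, rather than mere inequalities, from the recursive short exact sequences demands careful control of the three-step periodicity and of the top nonzero Betti number, and this is where I would concentrate the effort; everything else reduces to a lattice-invariance argument or to routine long-exact-sequence manipulation.
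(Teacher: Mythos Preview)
The paper does not supply a proof of this theorem: it is explicitly presented as a compilation of results quoted from the cited references, with no argument given beyond the citations, so there is no ``paper's proof'' against which to compare your sketch.

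Your outline is sound and correctly identifies the mechanism behind each item: lcm-lattice invariance (and its refinements in \cite{LMa1,LMap,LMa2}) for (1)--(3), the Jacques path and cycle edge-ideal computations for (4) and (6), the tensor-product/K\"unneth argument for (5), and the graded shift and the short exact sequence $0\to (R/(I:m))(-r)\to R/I\to R/(I,m)\to 0$ for (7)--(8). One small bookkeeping point: an open string hypergraph on $\mu$ vertices has $\mu+1$ edges (namely $\{1\},\{1,2\},\dots,\{\mu-1,\mu\},\{\mu\}$), so the associated ideal is the edge ideal of the path $P_{\mu+1}$ on $\mu+1$ vertices rather than $P_\mu$; this does not affect your strategy, only the indexing when you match against Jacques's formulas. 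Your plan for (3) via ``deleting an edge can only simplify the top homology'' is the least developed step and would need to be made precise (in \cite{LMa2} the argument is more direct, via an explicit specialization of variables), but the direction of the inequality is correctly identified.
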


\begin{remark}Note that Theorem \ref{LaundryList1} part \ref{bettiNumbersSame}
allows us to talk about the projective dimension of a
hypergraph rather than an ideal. We will use $\pd(\mathcal{H}(I)) $ in
the place of $ \pd(R/I)$ throughout the paper. Moreover if an edge is an union of other edges in a hypergraph, and we are considering the projective dimension of the hypergraph, we can just ignore or remove the edge using Theorem \ref{LaundryList1} part \ref{removeMeets}. For example, in Figure \ref{FirstHG}, we can remove the edge $k=\{1,3,4,7,8\}=\{1\}\cup\{3,4\}\cup\{7,8\}$ and $n=\{4,5,6\}=\{4,5\}\cup\{5,6\}$. If a hypergraph
$\mathcal{H}=\mathcal{H}(I)$ is an union of two disconnected hypergraphs $G_1=\mathcal{H}(I_1)$ and
$G_2=\mathcal{H}(I_2)$, we have $\pd(\mathcal{H})=\pd(G_1)+\pd(G_2)$ and $\reg(\mathcal{H})=\reg(G_1)+\reg(G_2)$ as one can construct the minimal resolution of $R/I$ using the tensor of minimal resolutions of $R/I_1$ and $R/I_2$.

%by Proposition 2.2.8 of \cite{Jacques} and $\reg(\mathcal{H})=\reg(G_1)+\reg(G_2)$ by
%Lemma 3.2 of \cite{HT}. Moreover, one can compute the Betti numbers
%using the Betti numbers of $G_1$ and $G_2$ if both $G_1$ and $G_2$ are
%open strings by  \cite[Corollary~2.2]{JK}.
\end{remark}

\subsection{Colon Ideals - Key tool}
One technique that is used in \cite{LMa1} and \cite{LMa2} which we will need
here, is using the short exact sequences obtained by looking at colon
ideals.  Specifically there are two types of colon ideals that we are
interested in, and we explain below what each operation looks like
on the associated hypergraphs.

\begin{definition}
Let $\mathcal{H}$ be a hypergraph, and $I = I(\mathcal{H})$ be the
standard square-free monomial ideal associated to it in the polynomial
ring $R$. Let $\mathcal{G}(I)=\{m_1,\dots, m_{\mu}\}$ be the minimal generating set of $I$. Let $F$ be an
edge in $\mathcal{H}$ and let $x_F \in R$ be the variable associated
to $F$.   Also let
$v$ be a vertex in $\mathcal{H}$ and $m_v \in I$ be the monomial
generator associated to it.
\begin{itemize}
\item The hypergraph $\mathcal{H}_v : v = \mathcal{Q}_v$ is the hypergraph associated to the ideal
$I_v : m_v$ where $I_v =(\mathcal{G}(I)\backslash m_{v}$), and
$\mathcal{H}_v = \mathcal{H}(I_v)$ is the hypergraph associated to the ideal $I_v$.  
\item The hypergraph $\mathcal{H} : F$, obtained by removing $F$
  in $\mathcal{H}$, is the hypergraph associated to the ideal $I : x_F$.
  \item The hypergraph $(\mathcal{H},x_F)$, obtained by adding a vertex corresponding to the variable $x_F$ in $\mathcal{H}$, is the hypergraph associated to the ideal $(I,x_F)$.

\end{itemize}
\end{definition}
% {\bf add in example here}

\begin{remark}\label{shortexact}
Given a hypergraph $\mathcal{H}$ and a vertex $v$ of $\mathcal{H}$, we consider a short exact sequence 
\[
0\rightarrow \mathcal{H}_v : v = \mathcal{Q}_v\rightarrow\mathcal{H}_{v}\rightarrow\mathcal{H}\rightarrow0.
\]
If $\pd(\mathcal{H}_{v}) >\pd(\mathcal{Q}_{v})$,
then with the projective dimension on the short exact sequence that $\pd(\mathcal{H}_{v})\leq\text{max}\{\pd(\mathcal{Q}_{v}),\pd(\mathcal{H})\}$
and \[\pd(\mathcal{H})\leq\text{max}\{\pd(\mathcal{H}_{v}),\pd(\mathcal{Q}_{v})+1\},\]
we can conclude that $\pd(\mathcal{H})=\pd(\mathcal{H}_{v})$. We will use this strategy in later sections. 
\end{remark}
The following result appearing in \cite{LMa2}
will be very useful to us in this paper. We put it here for the self-containment of this work and for the reader's convenience. We need some terminology first. The degree of a vertex is the number of faces containing the vertex. Let $v$ be a vertex in a hypergraph $\mathcal{H}$, and let $\mathcal{H}_1, . . . , \mathcal{H}_r$ be the connected components of $\mathcal{H}_v$; if one of them, say $\mathcal{H}_1$, is
a string hypergraph, we call $\mathcal{H}_1$ a branch of $\mathcal{H}$ (from $v$).

\begin{theorem} \cite{LMa2} \label{LMa247} 
  Let $\mathcal{H}$ be a 1-dimensional hypergraph, $w$ a vertex with
  degree at least $3$ in $\mathcal{H}$, and $S$ be a branch departing
  from $w$ with $v_1,...,v_{n}$ vertices. Suppose vertices of $S$, $v_1,...,v_{n-1}$, are open and the end vertex $v_n$, i.e., the leaf of $S$, is the only closed vertex of $S$. Let $E$ be the unique edge connecting $w$ to $v_1$. Then $\pd(\mathcal{H}) = \pd(\mathcal{H}')$, where $\mathcal{H}'$ is the following hypergraph: (a) if $n\equiv1$ mod $3$, then $\mathcal{H}' =\mathcal{H}:E$; (b) if $n\equiv2$ mod $3$, then $\mathcal{H}'=\mathcal{H}_w$
\end{theorem}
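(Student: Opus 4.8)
The plan is to use the colon--ideal short exact sequences: case (b) of \Cref{LMa247} will succumb to a single such sequence, while case (a) will reduce to one projective--dimension inequality that is the real obstacle. Set $E=E_{0}=\{w,v_{1}\}$ and $E_{i}=\{v_{i},v_{i+1}\}$ for the remaining edges of the branch $S$, with $\{v_{n}\}$ the loop at the closed leaf, and let $\mathcal{H}_{0}$ denote $\mathcal{H}$ with the vertices $v_{1},\dots,v_{n}$ and the edge $E$ deleted, so that $w$ survives in $\mathcal{H}_{0}$ with its degree dropped by one (hence still at least two). For case (a), $n\equiv1\pmod 3$, I would use the edge--colon sequence $0\to R/(I:x_{E})(-1)\to R/I\to R/(I,x_{E})\to 0$, whose subobject is $\mathcal{H}:E$ and whose quotient is $(\mathcal{H},x_{E})$. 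For case (b), $n\equiv 2\pmod 3$, I would use the vertex--colon sequence $0\to \mathcal{Q}_{w}=\mathcal{H}_{w}:w\to\mathcal{H}_{w}\to\mathcal{H}\to 0$ of \Cref{shortexact}.

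The first real step is a bookkeeping computation that identifies the hypergraphs occurring above, taking care that a colon can render generators redundant and hence amputate vertices, producing isolated closed vertices together with shortened open strings. Coloning by $x_{E}$ sends $m_{v_{1}}=x_{E}x_{E_{1}}$ to the variable $x_{E_{1}}$, which (when $n\ge 2$) makes $m_{v_{2}}$ redundant, and one obtains $\mathcal{H}:E=\mathcal{H}_{0}\ \sqcup\ \{v_{1}\text{ isolated closed}\}\ \sqcup\ S'$ with $S'$ an open string on $n-2$ vertices, while $(\mathcal{H},x_{E})=(\mathcal{H}_{0})_{w}\ \sqcup\ S''$ with $S''$ an open string on $n-1$ vertices, where $(\mathcal{H}_{0})_{w}$ is $\mathcal{H}_{0}$ with the generator at $w$ removed. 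Likewise $\mathcal{H}_{w}$ deletes $w$, disconnecting $S$ into an open string on $n$ vertices, so $\mathcal{H}_{w}=(\mathcal{H}_{0})_{w}\ \sqcup\ (\text{open string on }n)$; and $\mathcal{Q}_{w}=\mathcal{H}_{w}:w$ collapses the top of $S$ as above and removes every edge at $w$, giving $\mathcal{Q}_{w}=\mathcal{H}_{0}^{\circ}\ \sqcup\ \{v_{1}\text{ isolated closed}\}\ \sqcup\ (\text{open string on }n-2)$, where $\mathcal{H}_{0}^{\circ}$ is the subhypergraph of $(\mathcal{H}_{0})_{w}$ on the same vertex set obtained by deleting the former edges at $w$. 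Additivity of $\pd$ over disjoint unions together with the open--string formula \Cref{LaundryList1}\ref{pdFormulaString} then makes every term explicit in $\pd(\mathcal{H}_{0})$, $\pd((\mathcal{H}_{0})_{w})$, $\pd(\mathcal{H}_{0}^{\circ})$ and $n$.

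Plugging these in: for $n\equiv 2\pmod 3$ the string arithmetic collapses to $\pd(\mathcal{H}_{w})-\pd(\mathcal{Q}_{w})=\pd((\mathcal{H}_{0})_{w})-\pd(\mathcal{H}_{0}^{\circ})+1$, which is strictly positive since $\mathcal{H}_{0}^{\circ}\subseteq(\mathcal{H}_{0})_{w}$ share a vertex set and so $\pd(\mathcal{H}_{0}^{\circ})\le\pd((\mathcal{H}_{0})_{w})$ by \Cref{LaundryList1}\ref{LMa2-2.9c}; then \Cref{shortexact} gives $\pd(\mathcal{H})=\pd(\mathcal{H}_{w})$, which is the claim in (b). For $n\equiv1\pmod 3$, because the subobject of the sequence is $R/(I:x_{E})$ (so the degree shift works in our favour), the $\mathrm{Tor}$ long exact sequence shows that $\pd(\mathcal{H})=\pd(\mathcal{H}:E)$ follows already from the non-strict inequality $\pd(\mathcal{H}:E)\ge\pd(\mathcal{H},x_{E})$; and the arithmetic above rewrites that inequality as $\pd((\mathcal{H}_{0})_{w})\le\pd(\mathcal{H}_{0})+1$.

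The hard part, then, is exactly this last inequality: that removing a single minimal generator of $\mathcal{H}_{0}$ cannot raise the projective dimension by more than one. I would try to establish it directly from the deletion short exact sequence, using that $\mathcal{H}_{0}$ is $1$-dimensional and that $w$ lies in at least two of its edges; alternatively, and probably more robustly, I would recast the entire proof as an induction on $n$ in which one peels three vertices off the leaf end of $S$ at a time (via the vertex--colon sequence at an interior vertex of $S$, a move that leaves $\mathcal{H}_{0}$ untouched and only shortens the branch while spinning off disjoint isolated closed vertices and open strings), so that the inductive hypothesis supplies the exact projective dimensions needed to check the hypothesis of \Cref{shortexact}, with the two direct computations above serving as the base cases $n=1$ and $n=2$. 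The remaining ingredients --- the string--length arithmetic and the disjoint--union reductions --- are routine given \Cref{LaundryList1}.
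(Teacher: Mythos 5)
First, a remark on the comparison itself: the paper does not prove \Cref{LMa247} --- it is imported from \cite{LMa2} ``for the self-containment of this work'' --- so your proposal can only be judged on its own merits. Your case (b) is essentially right: the identifications of $\mathcal{H}_w$ and $\mathcal{Q}_w$, the string arithmetic giving $\pd(\mathcal{H}_w)-\pd(\mathcal{Q}_w)=\pd((\mathcal{H}_0)_w)-\pd(\mathcal{H}_0^{\circ})+1>0$, and the appeal to \Cref{shortexact} all work. (Two small caveats: $\mathcal{H}_0^{\circ}$ need not be separated, so quoting \Cref{LaundryList1} \ref{LMa2-2.9c} for $\pd(\mathcal{H}_0^{\circ})\le\pd((\mathcal{H}_0)_w)$ is not literal --- it is safer to note that $I(\mathcal{H}_0^{\circ})$ is a colon of $I((\mathcal{H}_0)_w)$ by a monomial; and in case (a) the equality $\pd(\mathcal{H})=\pd(\mathcal{H}:E)$ when $\pd(\mathcal{H}:E)=\pd(\mathcal{H},x_E)$ also needs $\pd(\mathcal{H}:E)\le\pd(\mathcal{H})$, which you should cite from \Cref{LaundryList1} \ref{LMa2-2.9c}.)

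Case (a), however, contains a genuine gap, compounded by a bookkeeping error. The hypergraph $(\mathcal{H},x_E)$ is \emph{not} $(\mathcal{H}_0)_w\sqcup S''$: the new generator $x_E$ is itself a minimal generator of $(I,x_E)$ and appears as an isolated closed vertex contributing $+1$ to the projective dimension (this is exactly how the paper uses $(\mathcal{H},x_F)$ in the proof of \Cref{cycleF}, where $\pd(\mathcal{H},x_F)=\pd(\mathcal{H}_{V_F})+1$). With the correct count the inequality your argument requires is $\pd((\mathcal{H}_0)_w)\le\pd(\mathcal{H}_0)$, not the weaker $\pd((\mathcal{H}_0)_w)\le\pd(\mathcal{H}_0)+1$, and neither is proved in the proposal. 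The assertion that removing one minimal generator raises projective dimension by at most one is not a standard fact and does not follow from the deletion short exact sequence, which only yields $\pd((\mathcal{H}_0)_w)\le\max\{\pd(\mathcal{H}_0^{\circ}),\pd(\mathcal{H}_0)\}$ and, since $\pd(\mathcal{H}_0^{\circ})\le\pd((\mathcal{H}_0)_w)$, gives nothing. The fallback induction on $n$ does not repair this: peeling vertices off the leaf shortens $S$ but leaves $\mathcal{H}_0$ untouched, so the base case is still the unproved inequality $\pd((\mathcal{H}_0)_w)\le\pd(\mathcal{H}_0)$ for an arbitrary $1$-dimensional hypergraph in which $w$ has degree at least $2$; moreover, verifying the hypothesis of \Cref{shortexact} at each inductive step requires exact values of the intermediate projective dimensions, which an inductive statement asserting only an equality of two unknown quantities cannot supply. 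That inequality is where the actual content of case (a) lives, and it is precisely what is missing.
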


\begin{example}
In \Cref{fig:H:E}, there are 4 hypergraphs, $\mathcal{H}$, $\mathcal{H}_1=\mathcal{H}:E$, $\mathcal{H}_2=\mathcal{H}_{w_2}$, $\mathcal{H}_4=(\mathcal{H},x_F)$. $S_1$ is a branch of length 1 departing from $w_1$ and $S_2$ is a branch of length 2 departing from $w_2$. By \Cref{LMa247}, $\pd(\mathcal{H}) = \pd(\mathcal{H}_1)=\pd(\mathcal{H}_2)$. Notice that in $\mathcal{H}_2$, the edge $F$ becomes an 1-dimensional edge. 

\begin{figure}[h] 
\caption{}\label{fig:H:E}
\begin{center}
\begin{tikzpicture}[thick, scale=0.7, every node/.style={scale=0.8}]
\shade [shading=ball, ball color=black]  (0,0) circle (.1);
\draw  [shape=circle] (1,-1.2) circle (.1)  node[left]{$S_2$};
\draw  [shape=circle] (2,-2) circle (.1)  node[left]{$w_2$};
\draw  [shape=circle] (3.7,-2.5) circle (.1);
\draw  [shape=circle] (4.7,-1.5) circle (.1);
\draw  [shape=circle] (5,0) circle (.1);
\draw  [shape=circle] (4,1.2) circle (.1) ;
\draw  [shape=circle] (2.5,1.7) circle (.1) node[above]{$w_1$};
\draw  [shading=ball, ball color=black] (1,1.2) circle (.1) node[left]{$S_1$} node[above]{$a$};

\draw  (3.7,-3.2) node[below]{$\mathcal{H}$} ;
\draw  (1.5,1.2) node[right]{$E$};
\draw  (1.5,1.3) node[above]{$b$} ;
\draw  (2.5,-1.2) node[right]{$F$} ;
\draw [line width=1.2pt] (1,-1.2)--(0,0);
\draw [line width=1.2pt] (1,-1.2)--(2,-2);
\draw [line width=1.2pt] (2,-2)--(3.7,-2.5); 
\draw  (3,-2.3) node[below]{$c$};
\draw  (4.2,-1.9) node[below]{$d$};
\draw [line width=1.2pt] (3.7,-2.5)--(4.7,-1.5) node[left]{$w_2$};
\draw [line width=1.2pt ] (4.7,-1.5)--(5,0) ;
\draw [line width=1.2pt] (5,0)--(4,1.2);
\draw [line width=1.2pt] (4,1.2)--(2.5,1.7);
\draw [line width=1.2pt] (2.5,1.7)--(1,1.2);

\path [pattern=north west lines, pattern  color=blue]   (2,-2)--(4.7,-1.5)--(2.5,1.7)--cycle;

\shade [shading=ball, ball color=black]  (7,0) circle (.1) ;
\draw  [shape=circle] (8,-1.2) circle (.1) node[left]{$S_2$};
\draw  [shape=circle] (9,-2) circle (.1)  node[left]{$w_2$};
\draw  [shape=circle] (10.7,-2.5) circle (.1);
\draw  [shape=circle] (11.7,-1.5) circle (.1);
\draw  [shape=circle] (12,0) circle (.1) ;
\draw  [shape=circle] (11,1.2) circle (.1) ;
\draw  [shape=circle] (9.5,1.7) circle (.1) node[above]{$w_1$};
\draw  [shading=ball, ball color=black] (8,1.2) circle (.1) node[left]{$S_1$} node[above]{$ab$};
\draw  (10,-2.3) node[below]{$c$};
\draw  (11.2,-1.9) node[below]{$d$};
\draw  (10.7,-3.2) node[below]{$\mathcal{H}_1=\mathcal{H}:E$} ;
\draw  (8.5,1.2) node[right]{$E$} ;
\draw  (9.5,-1.2) node[right]{$F$} ;
 \draw [line width=1.2pt] (8,-1.2)--(7,0);
\draw [line width=1.2pt] (8,-1.2)--(9,-2);
\draw [line width=1.2pt] (9,-2)--(10.7,-2.5);
\draw [line width=1.2pt] (10.7,-2.5)--(11.7,-1.5);
\draw [line width=1.2pt ] (11.7,-1.5)--(12,0);
\draw [line width=1.2pt] (12,0)--(11,1.2);
\draw [line width=1.2pt] (11,1.2)--(9.5,1.7);
% \draw [line width=1.2pt] (9.5,1.7)--(8,1.2);
\path [pattern=north west lines, pattern  color=blue]   (9,-2)--(11.7,-1.5)--(9.5,1.7)--cycle;
% \path [pattern=north west lines, pattern  color=blue]   (9,-2)--(11.7,-1.5)--(11,1.2)--cycle;

\shade [shading=ball, ball color=black]  (14,0) circle (.1) ;
\draw  [shading=ball, ball color=black] (15,-1.2) circle (.1) node[left]{$S_2$};
% \draw  [shape=circle] (16,-2) circle (.1) ;
\draw  [shading=ball, ball color=black] (17.7,-2.5) circle (.1);
\draw  [shape=circle] (18.7,-1.5) circle (.1) ;
\draw  [shape=circle] (19,0) circle (.1) ;
\draw  [shape=circle] (18,1.2) circle (.1) ;
\draw  [shape=circle] (16.5,1.7) circle (.1) node[above]{$w_1$};
\draw  [shading=ball, ball color=black] (15,1.2) circle (.1) node[left]{$S_1$}; 
\draw  (17.4,-2.3) node[below]{$c$};
\draw  (18.2,-1.9) node[below]{$d$};
\draw  (17.7,-3.2) node[below]{$\mathcal{H}_2=\mathcal{H}_{w_2}$} ;
\draw  (17,-.2) node[right]{$F$} ;v
\draw [line width=1.2pt] (15,-1.2)--(14,0);
% \draw [line width=1.2pt] (15,-1.2)--(16,-2);
% \draw [line width=1.2pt] (16,-2)--(17.7,-2.5);
\draw [line width=1.2pt] (17.7,-2.5)--(18.7,-1.5);
\draw [line width=1.2pt ] (18.7,-1.5)--(19,0);
\draw [line width=1.2pt] (19,0)--(18,1.2);
\draw [line width=1.2pt] (18,1.2)--(16.5,1.7);
\draw [line width=1.2pt] (16.5,1.7)--(15,1.2);
\draw [line width=1.2pt] (18.7,-1.5)--(16.5,1.7);
% \path [pattern=north west lines, pattern  color=blue]   (16,-2)--(18.7,-1.5)--(16.5,1.7)--cycle;
% \path [pattern=north west lines, pattern  color=blue]   (16,-2)--(19,0)--(15,1.2)--cycle;

\shade [shading=ball, ball color=black]  (0,-6) circle (.1) ;
\draw  [shading=ball, ball color=black] (1,-7.2) circle (.1) ;
% \draw  [shape=circle] (2,-8) circle (.1)  node[left]{$w_3= v_1$};
\draw  [shading=ball, ball color=black] (3.7,-8.5) circle (.1) ;
% \draw  [shape=circle] (4.7,-7.5) circle (.1) node[right]{$w_5 = v_2$} ;
\draw  [shading=ball, ball color=black] (5,-6) circle (.1) ;
\draw  [shading=ball, ball color=black] (4,-4.8) circle (.1) ;
% \draw  [shape=circle] (2.5,-4.3) circle (.1) node[right]{$w_8= v_3$} ;
\draw  [shading=ball, ball color=black] (1,-4.8) circle (.1) node[above]{$ab$};
\draw  [shading=ball, ball color=black] (3,-6.8) circle (.1) node[above]{$x_F$};
\draw  (3.7,-9.2) node[below]{$\mathcal{H}_3=(\mathcal{H},x_F)$} ;
\draw  (3.3,-8.3) node[below]{$cd$};
\draw [line width=1.2pt] (1,-7.2)--(0,-6);
% \draw [line width=1.2pt] (1,-7.2)--(2,-8);
% \draw [line width=1.2pt] (2,-8)--(3.7,-8.5);
% \draw [line width=1.2pt] (3.7,-8.5)--(4.7,-7.5);
% \draw [line width=1.2pt ] (4.7,-7.5)--(5,-6);
\draw [line width=1.2pt] (5,-6)--(4,-4.8);
% \draw [line width=1.2pt] (4,-4.8)--(2.5,-4.3);
% \draw [line width=1.2pt] (2.5,-4.3)--(1,-4.8);

% \path [pattern=north west lines, pattern  color=blue]   (2,-8)--(4.7,-9.5)--(2.5,-5.3)--cycle;\shade [shading=ball, ball color=black]  (0,0) circle (.1) node[left]{$w_1$};

\end{tikzpicture}
\end{center}
\end{figure}

\end{example}

\subsection{Splittings - Key Tool}
In \cite{EK} the notion of a \emph{splitting} of a monomial ideal $I$ was
introduced.  

\begin{definition}\cite{EK}\label{splitting}
A monomial ideal $I$ is \emph{splittable} if $I$ is the sum of two
nonzero monomial ideals $J$ and $K$, i.e. $I = J + K$, such that
\begin{enumerate}
\item The generating set $\mathcal{G}(I)$ of $I$,  is the disjoint
  union of $\mathcal{G}(J)$ and $\mathcal{G}(K)$.
\item There is a splitting function 
\[
\begin{aligned}
\mathcal{G}(J\cap K)  &\rightarrow &\mathcal{G}(J) \times
\mathcal{G}(K)\\
w & \rightarrow & (\psi(w), \phi(w))
\end{aligned} 
\]

satisfying
\begin{enumerate}
\item(S1) for all $w \in \mathcal{G}(J \cap K)$, $w = \lcm(\psi(w),
  \phi(w))$.
\item(S2) for every subset $S \subseteq \mathcal{G}(J \cap K)$, both
  $\lcm(\psi(S))$ and $\lcm(\phi(S))$ strictly divide $\lcm(S)$.
\end{enumerate}

\end{enumerate}
If $J$ and $K$ satisfy the above properties they are  called a
\emph{splitting} of $I$.
\end{definition}

Now the key reason we are interested in splittings is the following
result by both Eliahou-Kervaire and separately Fatabbi.

\begin{theorem} (Eliahou-Kervaire \cite{EK} Fatabbi \cite{fatabbi})\label{splitForm}
Suppose $I$ is a splittable monomial ideal with splitting $I = J +
K$.  Then for all $i,j \geq 0$
$$\beta_{i,j} (I) = \beta_{i,j} (J) + \beta_{i,j}(K) +
\beta_{i-1,j}(J\cap K). $$

\end{theorem}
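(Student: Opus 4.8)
The plan is to derive the formula from a Mayer--Vietoris short exact sequence together with the long exact sequence it induces in $\tor$. Since $I=J+K$, we have a short exact sequence of $R$-modules
$$0\longrightarrow J\cap K\xrightarrow{\ w\mapsto (w,w)\ } J\oplus K\xrightarrow{\ (a,b)\mapsto a-b\ } I\longrightarrow 0,$$
whose exactness is immediate (the kernel of the second map is the diagonal copy of $J\cap K$, and surjectivity is clear). Applying $-\otimes_R\mathbb{K}$ and passing to the long exact sequence in $\tor_i^R(-,\mathbb{K})$ gives, for each $i$,
$$\cdots\to\tor_i(J\cap K,\mathbb{K})\xrightarrow{f_i}\tor_i(J,\mathbb{K})\oplus\tor_i(K,\mathbb{K})\xrightarrow{g_i}\tor_i(I,\mathbb{K})\xrightarrow{\delta_i}\tor_{i-1}(J\cap K,\mathbb{K})\to\cdots,$$
a sequence compatible with the internal grading.

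The heart of the matter is the claim that $f_i=0$ for every $i$. Granting this, each $\delta_i$ is surjective and each $g_i$ injective, so for every $i$ and $j$ the long exact sequence breaks into short exact sequences of finite-dimensional $\mathbb{K}$-vector spaces
$$0\to\tor_i(J,\mathbb{K})_j\oplus\tor_i(K,\mathbb{K})_j\to\tor_i(I,\mathbb{K})_j\to\tor_{i-1}(J\cap K,\mathbb{K})_j\to 0,$$
and comparing dimensions yields exactly $\beta_{i,j}(I)=\beta_{i,j}(J)+\beta_{i,j}(K)+\beta_{i-1,j}(J\cap K)$. Hypothesis (1) of \Cref{splitting} (disjointness of the generating sets) is what makes the $i=0$ instance consistent, since then $\mathcal{G}(I)=\mathcal{G}(J)\sqcup\mathcal{G}(K)$ and $\tor_{-1}(J\cap K,\mathbb{K})=0$.

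It remains to prove $f_i=0$; equivalently, that the maps $\tor_i(J\cap K,\mathbb{K})\to\tor_i(J,\mathbb{K})$ and $\tor_i(J\cap K,\mathbb{K})\to\tor_i(K,\mathbb{K})$ induced by the inclusions both vanish, and by symmetry it is enough to treat the first. Work multigraded and fix $\mathbf{a}\in\NN^n$ with $\tor_i(J\cap K,\mathbb{K})_{\mathbf{a}}\neq 0$; by the Taylor-complex (equivalently lcm-lattice) description of multigraded Betti numbers, $x^{\mathbf{a}}=\lcm(S)$ for some $S\subseteq\mathcal{G}(J\cap K)$, and a class generating this $\tor$ is represented by a cycle supported on such an $S$. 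Lift the inclusion $J\cap K\hookrightarrow J$ to a map of multigraded free resolutions; on the degree-zero generator labelled by $w$ it sends $f_w\mapsto\tfrac{w}{\psi(w)}\,e_{\psi(w)}$, and condition (S2) applied to the singleton $\{w\}$ forces $\psi(w)$ to divide $w$ strictly, so $\tfrac{w}{\psi(w)}\in\mathfrak{m}$; this already settles $i=0$. For general $i$, propagating this through the lift and invoking (S2) for the whole set $S$ --- which says $\lcm(\psi(S))$ divides $\lcm(S)=x^{\mathbf{a}}$ strictly --- one checks that the image of the cycle lies in the part of the resolution of $J$ concentrated in multidegrees strictly below $x^{\mathbf{a}}$, hence becomes a boundary, i.e. zero after $-\otimes\mathbb{K}$ in multidegree $\mathbf{a}$. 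Condition (S1), $w=\lcm(\psi(w),\phi(w))$, is used throughout so that these chain-level formulas are literally correct; replacing $\psi$ by $\phi$ and $J$ by $K$ gives the other vanishing.

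The main obstacle is precisely this last step: converting the strict-divisibility in (S2) into the vanishing of the induced maps on all higher $\tor$. The degree-zero case is easy (it is exactly the singleton instance of (S2)); the real content is the inductive bookkeeping along the lift of resolutions --- or, equivalently, a multidegree-by-multidegree comparison of the upper-Koszul/lcm-lattice homology of $J\cap K$, $J$ and $K$ --- and this is where the full force of axiom (S2) is needed. Once $f_i=0$ is in hand, the remainder of the proof is the formal diagram chase above.
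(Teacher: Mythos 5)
Your overall framework is the right one, and in fact it is the same machinery the paper itself uses in its own adaptation: the paper never proves \Cref{splitForm} (it is quoted from Eliahou--Kervaire and Fatabbi), and its only argument of this kind, \Cref{Split}, uses exactly your short exact sequence $0\to J\cap K\to J\oplus K\to I\to 0$ and the induced long exact sequence in $\tor$, but deliberately sidesteps the hard point you face by imposing $\pd$/$\reg$ hypotheses that kill the relevant strand of $\tor_{q}(K,\mathbb{K})$ for degree reasons. Your reduction of the theorem to the claim $f_i=0$ for all $i$, and the dimension count once that claim is granted, are both correct.

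The gap is that the claim $f_i=0$ --- which is the entire content of the theorem --- is asserted rather than proved. Two concrete problems. First, your stated mechanism, ``the image of the cycle lies in multidegrees strictly below $x^{\mathbf a}$, hence becomes a boundary,'' is not a valid deduction: a cycle supported on free basis elements of strictly smaller multidegree need not be a boundary; what you actually need is that the comparison map, in multidegree $\mathbf a$, lands inside $\mathfrak m$ times the resolution of $J$, so that it vanishes after applying $-\otimes_R\mathbb{K}$. Second, you never construct the lift whose coefficients you are estimating; for $i>0$ you explicitly defer ``the inductive bookkeeping,'' which is precisely where (S2) must enter. The standard way to close this (essentially Fatabbi's argument) is to abandon minimal resolutions and compare Taylor resolutions instead: define $T_\bullet(J\cap K)\to T_\bullet(J)$ on basis elements by sending $e_S$, for $S=\{w_1,\dots,w_i\}\subseteq\mathcal{G}(J\cap K)$, to $\frac{\lcm(S)}{\lcm(\psi(S))}\,e_{\psi(w_1)}\wedge\cdots\wedge e_{\psi(w_i)}$ (interpreted as $0$ when the $\psi(w_j)$ are not distinct), check via (S1) and the divisibilities $\psi(w)\mid w$ that this is a chain map lifting the inclusion $J\cap K\hookrightarrow J$, and then observe that (S2) says every coefficient $\lcm(S)/\lcm(\psi(S))$ is a non-unit monomial, so the map is identically zero after tensoring with $\mathbb{K}$; since $\tor$ may be computed from any free resolutions, this gives $f_i=0$ in all homological degrees, and symmetrically for $K$ with $\phi$. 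As written, your proposal names this obstacle but does not overcome it, so the proof is incomplete at its decisive step.
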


It is important to note that not all monomial ideals admit
splittings. What is interesting is that there are sometimes
monomial ideals that can be decomposed into a sum of ideals $J$ and
$K$ which satisfy the conclusions of the previous theorem.  This
motivates the following definition by Francisco, Ha, and Van Tuyl in \cite{FranHaTu}

\begin{definition}
Let $I, J$ and $K$ be monomial ideals such that $\mathcal{G}(I)$ is the
  disjoint union of $\mathcal{G}(J)$ and $\mathcal{G}(K)$.  Then $I =
  J+K$ is a \emph{Betti splitting} if 
$$\beta_{i,j} (I) = \beta_{i,j} (J) + \beta_{i,j}(K) +
\beta_{i-1,j}(J\cap K) $$
for all $i \in \mathbb{N}$ and all (multi)degrees $j$.
\end{definition}

One complication however is that if one wants to use the existence of
a Betti splitting to prove something about a resolution, one must
first know something about the resolution in question.  The key for us
will be in dissecting the proof of Fatabbi in order to prove that in some special
cases, which may fail condition (S2) in Definition \ref{splitting}, that
a similar formula for (some) Betti numbers holds.

The following lemma is an adaptation of the proof of Fatabbi in a
special case where we do not have a splitting.  In this case we can show that the
necessary conditions hold at the end of the resolution, so that we get
a formula like that of Theorem \ref{splitForm} for the last Betti
numbers.  In particular this allows us to prove statements about projective
dimension.

\begin{lemma}
\label{Split}Let $I$ be a monomial ideal and $I=J+K$ in the ring
$R=\mathbb{K}[x_{1},...,x_{n}]$ over a field $\mathbb{K}$. Suppose
we have the following conditions on projective dimension 
\begin{enumerate}
\item $\pd(R/J)<q$
\item $\pd(R/K)=q$,
\end{enumerate}
and  $\reg(R/K)<r$. 
If $\beta_{q,q+r} (R/J\cap K)\neq 0$ and $\pd (R/J\cap K)=q$, then $\pd(R/I)=q+1$.
\end{lemma}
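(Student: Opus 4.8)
The plan is to compare the minimal free resolutions of $R/I$, $R/J$, $R/K$, and $R/(J\cap K)$ through the short exact sequence
\[
0 \to R/(J\cap K) \to R/J \oplus R/K \to R/I \to 0
\]
(equivalently, the Mayer--Vietoris sequence $\cdots \to \tor_{i+1}(R/I)\to \tor_i(R/(J\cap K)) \to \tor_i(R/J)\oplus \tor_i(R/K)\to \tor_i(R/I)\to\cdots$, graded). From the hypotheses $\pd(R/J)<q$, $\pd(R/K)=q$, and $\pd(R/(J\cap K))=q$, the standard long exact sequence estimate immediately gives $\pd(R/I)\le q+1$; so the real content is to show $\beta_{q+1}(R/I)\ne 0$, and I will in fact pin down a specific nonzero graded Betti number, namely $\beta_{q+1,\,q+r}(R/I)$.

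First I would look at the piece of the long exact sequence in homological degree $q+1$ and internal degree $q+r$:
\[
\tor_{q+1,\,q+r}(R/J)\oplus \tor_{q+1,\,q+r}(R/K) \to \tor_{q+1,\,q+r}(R/I) \to \tor_{q,\,q+r}(R/(J\cap K)) \to \tor_{q,\,q+r}(R/J)\oplus \tor_{q,\,q+r}(R/K).
\]
The leftmost term vanishes: $\tor_{q+1}(R/J)=0$ since $\pd(R/J)<q<q+1$, and $\tor_{q+1}(R/K)=0$ since $\pd(R/K)=q$. So the connecting map
\[
\partial:\ \tor_{q+1,\,q+r}(R/I)\ \hookrightarrow\ \tor_{q,\,q+r}(R/(J\cap K))
\]
is injective, and it suffices to show that its image is all of $\tor_{q,\,q+r}(R/(J\cap K))$ — i.e.\ that the next map $\tor_{q,\,q+r}(R/(J\cap K)) \to \tor_{q,\,q+r}(R/J)\oplus \tor_{q,\,q+r}(R/K)$ is zero on the (nonzero, by hypothesis) space $\tor_{q,\,q+r}(R/(J\cap K))\cong \beta_{q,q+r}(R/J\cap K)\ne 0$. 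The $R/J$ component of that map lands in $\tor_{q,\,q+r}(R/J)=0$ because $\pd(R/J)<q$. The $R/K$ component lands in $\tor_{q,\,q+r}(R/K)$, and here I invoke the regularity bound $\reg(R/K)<r$: since $\reg(R/K)<r$, every nonzero $\beta_{q,j}(R/K)$ forces $j-q\le \reg(R/K)<r$, so $\beta_{q,\,q+r}(R/K)=0$, i.e.\ $\tor_{q,\,q+r}(R/K)=0$ as well. Therefore the map out of $\tor_{q,\,q+r}(R/(J\cap K))$ is the zero map, $\partial$ is an isomorphism onto a nonzero space, and $\beta_{q+1,\,q+r}(R/I)=\beta_{q,\,q+r}(R/(J\cap K))\ne 0$. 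Combined with $\pd(R/I)\le q+1$ this yields $\pd(R/I)=q+1$.

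The main obstacle — and the only place subtlety enters — is making sure the vanishing claims are graded-correct and that the internal degree $q+r$ is the right one to test. The choice is dictated precisely by the two inequalities in the hypotheses: $r$ is chosen above $\reg(R/K)$ so that the $R/K$ Tor in degree $q+r$ dies by the regularity bound, while $q$ is exactly $\pd(R/K)$ and strictly above $\pd(R/J)$ so that both the $\tor_{q+1}(R/J)$, $\tor_{q+1}(R/K)$ (killing the incoming term) and $\tor_{q,\,q+r}(R/J)$ (part of the outgoing term) vanish. One should also note that the hypothesis $\pd(R/J\cap K)=q$ is what guarantees a genuine candidate class exists in the right homological slot; the hypothesis is used together with $\beta_{q,q+r}(R/J\cap K)\ne 0$ to know the space we are mapping out of is nonzero in the correct bidegree. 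I would close by remarking that this is exactly the ``end of the resolution'' version of the Eliahou--Kervaire/Fatabbi formula of \Cref{splitForm}: in bidegree $(q+1,q+r)$ we recover $\beta_{q+1,q+r}(R/I)=\beta_{q,q+r}(R/J\cap K)$, which is the $\beta_{i-1,j}(J\cap K)$ term, with the other two contributions $\beta_{q+1,q+r}(J)$ and $\beta_{q+1,q+r}(K)$ both forced to vanish — so no full splitting (in particular no condition (S2)) is needed, only these numerical inequalities.
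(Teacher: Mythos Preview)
Your argument is correct and follows essentially the same route as the paper's proof: both use the Mayer--Vietoris short exact sequence, pass to the long exact sequence in $\tor$, bound $\pd(R/I)\le q+1$ from the projective-dimension hypotheses, and then isolate the graded piece in bidegree $(q+1,q+r)$, using $\pd(R/J)<q$ and $\reg(R/K)<r$ to kill the neighboring terms and conclude $\tor_{q+1,\,q+r}(R/I)\cong \tor_{q,\,q+r}(R/(J\cap K))\ne 0$. The only cosmetic difference is that you phrase everything with quotient modules $R/I,R/J,R/K,R/(J\cap K)$, whereas the paper writes the sequence at the level of the ideals $I,J,K,J\cap K$; this is the same argument up to the usual index shift.
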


\begin{proof}
We consider the short exact sequence 
\[
0\rightarrow J\cap K\rightarrow J\oplus K\rightarrow I\rightarrow0.
\]
 Let $\alpha(w)=(w,w)$ be the map from $J\cap K$ to $J\oplus K$
and $\pi(u,v)=u-v$ be the map from $J\oplus K$ to $I$. There is
an induced homology sequence 
\begin{align*}
\cdots & \rightarrow \tor_{q+1}^{R}(J\cap K,\mathbb{K})\rightarrow \tor_{q+1}^{R}(J,\mathbb{K})\oplus \tor_{q+1}^{R}(K,\mathbb{K})\\
 & \rightarrow \tor_{q+1}^{R}(I,\mathbb{K})\rightarrow \tor_{q}^{R}(J\cap K,\mathbb{K})\\
 & \rightarrow \tor_{q}^{R}(J,\mathbb{K})\oplus \tor_{q}^{R}(K,\mathbb{K})\rightarrow \cdots
\end{align*}
We have $\pd(R/J)<q$, $\pd(R/K)=q$,
and $\pd(R/(J\cap K))=q$, hence the short exact sequence gives $\pd(R/I)\leq \max\{\pd(R/(J\cap K))+1,\pd(R/J\oplus K)\}\leq q+1$.  The homology sequence becomes 
\[
0\rightarrow0\rightarrow \tor_{q+1}^{R}(I,\mathbb{K})\rightarrow
\tor_{q}^{R}(J\cap K,\mathbb{K})\rightarrow \tor_{q}^{R}(K,\mathbb{K}) \rightarrow \cdots
\]

Moreover, the $q+r$ graded piece is the following:
\[
0\rightarrow \tor_{q+1}^{R}(I,\mathbb{K})_{q+r}\rightarrow
\tor_{q}^{R}(J\cap K,\mathbb{K})_{q+r}\rightarrow
\tor_{q}^{R}(K,\mathbb{K})_{q+r} \rightarrow \cdots
\]
 
Now using our assumption that $\reg(R/K)<r$ and $\pd(R/K)=q$
then we have $\tor_{q}^{R}(K,\mathbb{K})_{q+r}=0$.
This shows $\tor_{q+1}^{R}(I,\mathbb{K})_{q+r}\cong \tor_{q}^{R}(J\cap K,\mathbb{K})_{q+r}\neq0$
by the fact that $\beta_{q,q+r} (R/J\cap K)\neq 0$
and hence $\pd(R/I)=q+1$.
\end{proof}

\begin{remark}\label{SplitH}
The above lemma can be translated in terms of associated hypergraphs. Let
$\mathcal{H} = \mathcal{H}(I)$ be a hypergraph with underlying vertex set $V$, and let $V_1$ and $V_2$ be a partition
of the vertices of $\mathcal{H}$ such that $V_1 \cup V_2 = V$ and
$V_1 \cap V_2$ is empty.  Now define $I_i$ to be the ideal generated
by the generators of $I$ indexed by the elements in $V_i$ for $i =
1,2$.  Let $G_i=\mathcal{H}(I_i)$ for $i=1,2$ and $\mathcal{H}(I_1\cap
I_2)$ be the hypergraphs corresponding the ideals $I_i$ for $i=1,2$
and the ideal $I_1\cap I_2$.  Suppose $\pd(G_1)<q$, $\pd(G_2)=q$,  $\pd(\mathcal{H}(I_1\cap I_2))=q$,
and $\reg(G_2)<r$ and $\reg(\mathcal{H}(I_1\cap I_2))=r$. Suppose $\beta_{q,q+r}(\mathcal{H}(I_1\cap I_2))\neq 0$, then $\pd(\mathcal{H})=q+1$. 
\end{remark}

\section{Strings with higher dimensional edges}  \label{StringsSec}

In this section we are primarily interested in finding the projective dimension of a square-free monomial ideal such that its hypergraph is a string  with higher dimensional
edges attached to it.  Our primary object is described below and we fix the notation now for the easy reference later. 

\begin{notation}\label{stringwithEdge}\hangindent\leftmargini
\hskip\labelsep
Let $\mathcal{H}$ be a hypergraph and $F$ be an edge of $\mathcal{H}$. 
\begin{enumerate}[label={(\arabic*)},ref={(\arabic*)}]
\item Let $\mathcal{H}_{S_\mu}$ be a string hypergraph with vertex set $V=\{w_1, \dots,
w_\mu\}$, and an edge $F$ with $k>1$ vertices $\{v_1=w_{i_1}, \dots, v_k=w_{i_k}
\}\subseteq \{w_1, \dots,
w_\mu\}$ with $1\leq i_1< \dots <i_k \leq \mu$. When we say $\mathcal{H}$ is \emph{ a string together with an edge consisting of $k$ vertices}, we mean $\mathcal{H}=\mathcal{H}_{S_\mu} \cup F$. 
\item Let $n_1=i_1-1$, $n_j=i_j-i_{j-1}-1$ for $j=2, \dots k$, and $n_{k+1}=\mu-i_k$ be the number of vertices between vertices of $F$.
% We use $n_1$ to represent the number of vertices from $w_1$ to $v_1$
% including $w_1$, $n_i$ to represent the vertices between $v_{i-1}$
% and $v_i$ for $i = 2,\dots, k$, and $n_{k+1}$ to represent the number of vertices
% between $v_k$ and $w_{\mu}$ including $w_{\mu}$. 
\item We write $n_i=3l_i+r_i$  where $l_{i}$ are some
non-negative integers, and $0\leq r_{i}\leq2$ for $i=1,...,k+1$.

\end{enumerate}
\end{notation}

\begin{example} \label{2k-1}
The hypergraph shown in Figure \ref{StringedgeFig} shows the string
hypergraph with a higher dimensional edge with the notation outlined
above.  In this case, $\mu = 11$, $k = 4$, $n_1 = 1$, $n_2 = n_3 = n_4= 2$, and $n_5 = 0$.

\begin{figure}[h] 
\caption{}\label{StringedgeFig}
\begin{center}
\begin{tikzpicture}[thick, scale=0.8]
\shade [shading=ball, ball color=black]  (0,0) circle (.1) node[left]{$w_1$};
\draw  [shape=circle] (1,-0.5) circle (.1) node[left]{$w_2 = v_1$} ;
\draw  [shape=circle] (2,-1) circle (.1)  node[below]{$w_3$};
\draw  [shape=circle] (3,-1.5) circle (.1) node[below]{$w_4$} ;
\draw  [shape=circle] (4,-1) circle (.1) node[right]{$w_5 = v_2$} ;
\draw  [shape=circle] (5,-0.5) circle (.1) node[right]{$w_6$} ;
\draw  [shape=circle] (6,0) circle (.1) node[right]{$w_7$} ;
\draw  [shape=circle] (5,0.5) circle (.1) node[right]{$w_8= v_3$} ;
\draw  [shape=circle] (4,1) circle (.1) node[above]{$w_9$} ;
\draw  [shape=circle] (3,1.5) circle (.1) node[above]{$w_{10}$} ;
\shade [shading=ball, ball color=black]  (2,1) circle (.1)
node[left]{$w_{11} = v_4$};

\draw [line width=1.2pt] (0,0)--(1,-0.5);
\draw [line width=1.2pt] (1,-0.5)--(2,-1);
\draw [line width=1.2pt] (2,-1)--(3,-1.5);
\draw [line width=1.2pt] (3,-1.5)--(4,-1);
\draw [line width=1.2pt ] (4,-1)--(5,-0.5);
\draw [line width=1.2pt] (5,-0.5)--(6,0);
\draw [line width=1.2pt] (6,0)--(5,0.5);
\draw [line width=1.2pt] (5,0.5)--(4,1);
\draw [line width=1.2pt] (4,1)--(3,1.5);
\draw [line width=1.2pt] (3,1.5)--(2,1);
\path [pattern=north west lines, pattern  color=blue]   (1,-0.5)--(4,-1)--(5,0.5)--(2,1)--cycle;

\end{tikzpicture}
\end{center}
\end{figure}
\end{example}

Because of
\Cref{LaundryList1} \ref{removeMeets}, we will primarily focus on the higher dimensional
edges which are not unions of two or more edges. The vertex $w_i$ is
assumed to be open for all $i$ unless otherwise stated.  We
find the projective dimensions by considering three different cases:
$\sum_{i=1}^{k+1}r_{i}<2k$, $\sum_{i=1}^{k+1}r_{i}\geq2k+1$, and
$\sum_{i=1}^{k+1}r_{i}=2k$ in three propositions. We conclude this
section with \Cref{StringedgeAll} which covers all the previous
results. \Cref{2k-1} is an example of the cases when $\sum_{i=1}^{k+1}r_{i}<2k$.

First we will prove two propositions that deal with the case when $\sum_{i=1}^{k+1}r_{i}<2k$,
 or $\sum_{i=1}^{k+1}r_{i}\geq2k+1$. In these two cases we will show that for a hypergraph $\mathcal{H}$
satisfying the hypotheses of the propositions, that the projective dimension
will be the same as for $\mathcal{H}_{S_\mu}$. Before we can get to these propositions though we present couple of technical computations first, and the computational processes will be used in a similar manner in the later proofs. We often use the following identities without justification $A=\left\lfloor \frac{3A}{3}\right\rfloor =\left\lfloor \frac{3A+1}{3}\right\rfloor =\left\lfloor \frac{3A+2}{3}\right\rfloor$  and $A-1=\left\lfloor \frac{3A-3}{3}\right\rfloor =\left\lfloor \frac{3A-1}{3}\right\rfloor =\left\lfloor \frac{3A-2}{3}\right\rfloor$. 

\begin{lemma}
\label{lem:<2k}Let $n_{i}=3l_{i}+r_{i}$ with $0\leq r_{i}<3$ for
$i=1,...,k+1$. If $\sum_{i=1}^{k+1}r_{i}<2k$, then \[\sum_{i=1}^{k+1}(n_{i}-\left\lfloor \frac{n_{i}}{3}\right\rfloor )<k+\sum_{i=1}^{k+1}n_{i}-\left\lfloor \frac{k+\sum_{i=1}^{k+1}n_{i}}{3}\right\rfloor .\]

\end{lemma}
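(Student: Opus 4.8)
The plan is to reduce the inequality to a clean statement about floor functions by writing everything in terms of the $l_i$ and $r_i$, and then to proceed by a direct case analysis on the values of $\sum r_i$ against $2k$. Recall $n_i = 3l_i + r_i$, so $n_i - \lfloor n_i/3 \rfloor = 3l_i + r_i - l_i = 2l_i + r_i$, using $0 \le r_i < 3$. Hence the left-hand side is $\sum_{i=1}^{k+1}(2l_i + r_i) = 2L + R$ where $L := \sum l_i$ and $R := \sum r_i$. For the right-hand side, $k + \sum n_i = k + 3L + R$, so I need to understand $\lfloor (k + 3L + R)/3 \rfloor = L + \lfloor (k+R)/3 \rfloor$. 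Thus the right-hand side equals $k + 3L + R - L - \lfloor (k+R)/3 \rfloor = 2L + R + k - \lfloor (k+R)/3 \rfloor$. So after cancelling $2L + R$ from both sides, the claim is exactly
\[
0 < k - \left\lfloor \frac{k+R}{3} \right\rfloor, \qquad \text{i.e.} \qquad \left\lfloor \frac{k+R}{3} \right\rfloor < k,
\]
under the hypothesis $R < 2k$ (and implicitly $R \ge 0$, $k \ge 1$ since $F$ has $k > 1$ vertices).

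**Next I would verify** this reduced inequality $\lfloor (k+R)/3 \rfloor < k$. Since $R < 2k$ we have $k + R < 3k$, so $(k+R)/3 < k$, and therefore $\lfloor (k+R)/3 \rfloor \le (k+R)/3 < k$; as the floor is an integer strictly below $k$ this gives $\lfloor (k+R)/3 \rfloor \le k-1 < k$. That completes the argument. The key computational facts are just the two floor identities the authors highlight ($A = \lfloor 3A/3 \rfloor = \lfloor (3A+1)/3 \rfloor = \lfloor (3A+2)/3 \rfloor$) together with the additivity $\lfloor (3L + t)/3 \rfloor = L + \lfloor t/3 \rfloor$ for integers $L, t$, which is what lets me pull the $3L$ out of both floor expressions.

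**The only thing requiring a little care** — and the closest thing to an obstacle — is the step $\lfloor (k + 3L + R)/3 \rfloor = L + \lfloor (k+R)/3 \rfloor$: one must make sure $k + R$ is being treated as a single nonnegative integer and invoke $\lfloor (3L + m)/3 \rfloor = L + \lfloor m/3 \rfloor$, which holds for any integer $m \ge 0$. Everything else is bookkeeping. So the proof will be short: substitute $n_i - \lfloor n_i/3\rfloor = 2l_i + r_i$, set $L = \sum l_i$ and $R = \sum r_i$, simplify both sides to reduce the claim to $\lfloor (k+R)/3 \rfloor < k$, and then observe $R < 2k \Rightarrow k + R < 3k \Rightarrow \lfloor (k+R)/3 \rfloor \le k - 1$.
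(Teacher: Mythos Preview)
Your proof is correct and follows essentially the same approach as the paper: both simplify the left side to $\sum_i (n_i - l_i)$ and use $\sum r_i < 2k$ to bound the floor on the right. Your version is slightly more streamlined in that you explicitly pull out $L = \sum l_i$ and reduce the entire claim to the single clean inequality $\lfloor (k+R)/3 \rfloor < k$, whereas the paper performs the bound directly inside the floor expression; the underlying argument is identical.
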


\begin{proof}
We simplify the left hand side of the inequality using $n_{i}=3l_{i}+r_{i}$
and $0\leq r_{i}<3$. 
\begin{align*}
\sum_{i=1}^{k+1}(n_{i}-\left\lfloor \frac{n_{i}}{3}\right\rfloor ) & =\sum_{i=1}^{k+1}(n_i-\left\lfloor \frac{3l_{i}+r_{i}}{3}\right\rfloor )\\
 & =\sum_{i=1}^{k+1}(n_i-l_{i})
\end{align*}
We simplify the right hand side of the inequality with the assumption of
$\sum_{i=1}^{k+1}r_{i}<2k$ to obtain the conclusion as follows.
\begin{align*}
k+\sum_{i=1}^{k+1}n_{i}-\left\lfloor \frac{k+\sum_{i=1}^{k+1}n_{i}}{3}\right\rfloor  & =k+\sum_{i=1}^{k+1}n_i-\left\lfloor \frac{k+\sum_{i=1}^{k+1}(3l_{i}+r_{i})}{3}\right\rfloor \\
 & >k+\sum_{i=1}^{k+1}n_i-\left\lfloor \frac{k+2k+\sum_{i=1}^{k+1}(3l_{i})}{3}\right\rfloor \\
 & =k+\sum_{i=1}^{k+1}n_i-k-\sum_{i=1}^{k+1}l_{i}\\
 & =\sum_{i=1}^{k+1}(n_i-l_{i})
\end{align*}
\end{proof}
\begin{lemma}
\label{lem:>2k+1}Let $n_{i}=3l_{i}+r_{i}$ with $r_{1}=2$ and $0\leq r_{i}<3$
for $i=1,...,k+1$. If $\sum_{i=1}^{k+1}r_{i}\geq2k+1$, then \begin{enumerate}
    \item $k+\sum_{i=1}^{k+1}n_{i}-\left\lfloor \frac{k+\sum_{i=1}^{k+1}n_{i}}{3}\right\rfloor =\sum_{i=1}^{k+1}(n_{i}-\left\lfloor \frac{n_{i}}{3}\right\rfloor )$
    \item $n_{1}-\left\lfloor \frac{n_{1}-2}{3}\right\rfloor +k-2+\sum_{i=2}^{k+1}n_{i}-\left\lfloor \frac{k+\sum_{i=2}^{k+1}n_{i}}{3}\right\rfloor <\sum_{i=1}^{k+1}(n_{i}-\left\lfloor \frac{n_{i}}{3}\right\rfloor )$.
\end{enumerate}

\end{lemma}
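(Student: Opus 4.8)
\emph{Proof proposal for Lemma~\ref{lem:>2k+1}.} The plan is to reduce both inequalities to elementary statements about the two aggregate quantities $L := \sum_{i=1}^{k+1} l_i$ and $R := \sum_{i=1}^{k+1} r_i$, by substituting $n_i = 3l_i + r_i$ everywhere and repeatedly invoking the floor identity $\lfloor \tfrac{3A+b}{3}\rfloor = A + \lfloor \tfrac{b}{3}\rfloor$ already used in the proof of Lemma~\ref{lem:<2k}. The only two facts about $R$ that will be needed are the hypothesis $R \ge 2k+1$ and the bound $R \le 2(k+1) = 2k+2$ coming from $0\le r_i\le 2$ over the $k+1$ indices; together these force $R \in \{2k+1, 2k+2\}$, and in particular $2k \le R \le 2k+2$.

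For part (1) I would first record the common right-hand side: since $\lfloor n_i/3\rfloor = l_i$, we have $\sum_{i=1}^{k+1}(n_i - \lfloor n_i/3\rfloor) = \sum (n_i - l_i) = (3L+R) - L = 2L + R$. For the left-hand side, write $k + \sum n_i = 3L + (k+R)$, so $\lfloor \tfrac{k+\sum n_i}{3}\rfloor = L + \lfloor \tfrac{k+R}{3}\rfloor$ and the left-hand side equals $2L + R + \bigl(k - \lfloor \tfrac{k+R}{3}\rfloor\bigr)$. Hence part (1) is equivalent to $\lfloor \tfrac{k+R}{3}\rfloor = k$, i.e.\ to $3k \le k+R < 3k+3$, i.e.\ to $2k \le R \le 2k+2$, which holds by the observations above. (Note that part (1) does not actually use $r_1 = 2$.)

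For part (2) the extra ingredient is exactly the hypothesis $r_1 = 2$, which gives $n_1 - 2 = 3l_1$ and hence $n_1 - \lfloor \tfrac{n_1-2}{3}\rfloor = 3l_1 + 2 - l_1 = 2l_1 + 2$. Writing $\sum_{i=2}^{k+1} n_i = 3(L-l_1) + (R-2)$ and $k + \sum_{i=2}^{k+1} n_i = 3(L-l_1) + (k+R-2)$, the whole left-hand side collapses, after the $\pm 2l_1$ terms cancel, to $k + 2L + R - 2 - \lfloor \tfrac{k+R-2}{3}\rfloor$. Comparing with the right-hand side $2L+R$ from part (1), the asserted strict inequality becomes $k-2 < \lfloor \tfrac{k+R-2}{3}\rfloor$, equivalently $\lfloor \tfrac{k+R-2}{3}\rfloor \ge k-1$, equivalently $k + R - 2 \ge 3(k-1)$, i.e.\ $R \ge 2k-1$; and this is implied by $R \ge 2k+1$.

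I do not expect a genuine obstacle here — the argument is a bookkeeping exercise in the same style as Lemma~\ref{lem:<2k}. The only points requiring care are the floor arithmetic (the identity $\lfloor (3A+b)/3\rfloor = A + \lfloor b/3\rfloor$ together with the integer-threshold fact $\lfloor x\rfloor \ge m \iff x \ge m$), and making sure the lone use of $r_1 = 2$ in part (2) is deployed to split the $n_1$ term off cleanly before the cancellation.
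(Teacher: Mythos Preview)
Your argument is correct and follows essentially the same route as the paper's own proof: substitute $n_i=3l_i+r_i$, pull the multiples of~$3$ out of the floors, and reduce both statements to the observation that $\sum r_i\in\{2k+1,2k+2\}$ forces $\lfloor (k+\sum r_i)/3\rfloor=k$ (for part (1)) and, using $r_1=2$ so $\sum_{i\ge2}r_i\ge 2k-1$, that $\lfloor (k+\sum r_i-2)/3\rfloor\ge k-1$ (for part (2)). Your use of the aggregates $L$ and $R$ streamlines the bookkeeping, but the underlying computation is identical to the paper's.
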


\begin{proof}
Notice that $2k+2\geq\sum_{i=1}^{k+1}r_{i}\geq2k+1$ by the assumption $0\leq r_{i}<3$.  Where the first inequality assumes that each $r_i = 2$. This means that $\sum_{i=1}^{k+1}r_{i}$ is either $2k+1$ or $2k+2$, hence we must have
\begin{align*}
\left\lfloor \frac{k+\sum_{i=1}^{k+1}n_{i}}{3}\right\rfloor 	&=\left\lfloor \frac{k+\sum_{i=1}^{k+1}(3l_{i}+r_{i})}{3}\right\rfloor \\
&=\left\lfloor \frac{k+\sum_{i=1}^{k+1}(3l_{i})+\sum_{i=1}^{k+1}r_{i}}{3}\right\rfloor \\
&=k+\sum_{i=1}^{k+1}l_{i}\\
&=k+\sum_{i=1}^{k+1}\left\lfloor \frac{n_{i}}{3}\right\rfloor .
\end{align*}
The first equality follows immediately.

The inequality (2) requires the extra assumption $r_{1}=2$, since together with the assumption that $\sum_{i=1}^{k+1}r_{i}\geq2k+1$
it implies $\sum_{i=2}^{k+1}r_{i}\geq2k-1$. We write $n_{1}=3l_{1}+2$
and $n_{i}=3l_{i}+r_{i}$, and use the inequality $\sum_{i=2}^{k+1}r_{i}\geq2k-1$ to obtain the statement's inequality.
\begin{align*}
&n_{1}-\left\lfloor \frac{n_{1}-2}{3}\right\rfloor +k-2+\sum_{i=2}^{k+1}n_{i}-\left\lfloor \frac{k+\sum_{i=2}^{k+1}n_{i}}{3}\right\rfloor  \\ &=n_{1}-\left\lfloor \frac{3l_{1}}{3}\right\rfloor+k-2+\sum_{i=2}^{k+1}n_{i}-\left\lfloor \frac{k+\sum_{i=2}^{k+1}(3l_{i}+r_{i})}{3}\right\rfloor \\
 & \leq n_{1}-l_{1}+k-2+\sum_{i=2}^{k+1}n_{i}-\left\lfloor \frac{k+2k-1+\sum_{i=2}^{k+1}(3l_{i})}{3}\right\rfloor \\
 & =k-2+\sum_{i=1}^{k+1}n_{i}-k+1-\sum_{i=1}^{k+1}l_{i}\\
 & <\sum_{i=1}^{k+1}(n_{i}-\left\lfloor \frac{n_{i}}{3}\right\rfloor ).
\end{align*}

\end{proof}

With these technical computations out of the way we are ready to state and prove the first two propositions.  

\begin{proposition}
\label{String2k-1} We adapt  \Cref{stringwithEdge}. If $\sum_{i=1}^{k+1}r_{i}<2k$,
then $\pd(\mathcal{H})=\pd(\mathcal{H}_{S_{\mu}})$.
\end{proposition}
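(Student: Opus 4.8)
\emph{Plan.} The lower bound is free: since $F$ is an edge of $\mathcal{H}$ all of whose vertices lie on $\mathcal{H}_{S_\mu}$, the string $\mathcal{H}_{S_\mu}$ is a subhypergraph of $\mathcal{H}$ with $\mu(\mathcal{H}_{S_\mu})=\mu(\mathcal{H})$, so \Cref{LaundryList1}\ref{LMa2-2.9c} gives $\pd(\mathcal{H}_{S_\mu})\le\pd(\mathcal{H})$, and by \Cref{LaundryList1}\ref{pdFormulaString} this quantity is $\mu-\left\lfloor\frac{\mu}{3}\right\rfloor$. Hence the entire content is the reverse inequality $\pd(\mathcal{H})\le\mu-\left\lfloor\frac{\mu}{3}\right\rfloor$, and I would extract it from the short exact sequence
\[
0\longrightarrow R/(I:x_F)\xrightarrow{\;\cdot x_F\;}R/I\longrightarrow R/(I,x_F)\longrightarrow 0,
\]
where $I=I(\mathcal{H})$ and $x_F$ is the variable attached to $F$. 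The associated long exact sequence of $\tor_\bullet^R(-,\mathbb{K})$-modules shows $\pd(R/I)\le\max\{\pd(R/(I:x_F)),\,\pd(R/(I,x_F))\}$, so it suffices to bound both outer terms by $\mu-\left\lfloor\frac{\mu}{3}\right\rfloor$.

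The term $R/(I:x_F)$ is immediate: colon-ing by $x_F$ just deletes the edge $F$, so $I:x_F=I(\mathcal{H}:F)=I(\mathcal{H}_{S_\mu})$ and $\pd(R/(I:x_F))=\mu-\left\lfloor\frac{\mu}{3}\right\rfloor$ by \Cref{LaundryList1}\ref{pdFormulaString}. For $R/(I,x_F)$ I would write $(I,x_F)=(x_F)+I''$ with $I''=(\,m_j : w_j\notin F\,)$; since no generator of $I''$ involves $x_F$, the quotient $R/(I,x_F)$ splits off a tensor factor $\mathbb{K}[x_F]/(x_F)$, so $\pd(R/(I,x_F))=1+\pd(R''/I'')$ where $R''$ is the polynomial ring on the remaining variables. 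The key combinatorial point is that $\mathcal{H}(I'')$ is obtained from the string $\mathcal{H}_{S_\mu}$ by deleting the vertices $v_1,\dots,v_k$ of $F$: each such deletion truncates the two incident string edges to singletons, so each of the $k+1$ gaps between consecutive vertices of $F$ (together with the two ends) becomes an open string, with \emph{closed} endpoints, on $n_1,\dots,n_{k+1}$ vertices respectively, and these strings are pairwise vertex- and edge-disjoint. Therefore, using \Cref{LaundryList1}\ref{bettiNumbersSame} to pass between an ideal and its hypergraph, \Cref{LaundryList1}\ref{pdFormulaString} on each component, and additivity of projective dimension over disconnected unions (the remark following \Cref{LaundryList1}), one gets $\pd(R''/I'')=\sum_{i=1}^{k+1}\left(n_i-\left\lfloor\frac{n_i}{3}\right\rfloor\right)$.

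It remains to feed in the arithmetic. Since $\sum_{i=1}^{k+1}n_i=\mu-k$, the hypothesis $\sum_{i=1}^{k+1}r_i<2k$ lets me apply \Cref{lem:<2k}, which gives
\[
\sum_{i=1}^{k+1}\left(n_i-\left\lfloor\frac{n_i}{3}\right\rfloor\right)\;<\;\mu-\left\lfloor\frac{\mu}{3}\right\rfloor;
\]
as both sides are integers this forces $\pd(R''/I'')\le\mu-\left\lfloor\frac{\mu}{3}\right\rfloor-1$, hence $\pd(R/(I,x_F))=1+\pd(R''/I'')\le\mu-\left\lfloor\frac{\mu}{3}\right\rfloor$. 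Plugging both bounds into the short exact sequence yields $\pd(\mathcal{H})=\pd(R/I)\le\mu-\left\lfloor\frac{\mu}{3}\right\rfloor=\pd(\mathcal{H}_{S_\mu})$, and together with the lower bound this finishes the proof.

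I expect the only mildly delicate steps to be: (i) checking that deleting the $F$-vertices from the string genuinely produces \emph{open} strings — that is, that each gap endpoint becomes closed through a truncated string edge, rather than leaving a string with an open end whose projective dimension formula would differ; and (ii) observing that the extra $+1$ produced by killing $x_F$ is precisely what the \emph{strict} inequality of \Cref{lem:<2k} is designed to absorb. Everything else is formal manipulation of the short exact sequence together with the formulas catalogued in \Cref{LaundryList1}.
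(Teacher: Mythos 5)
Your proposal is correct and follows essentially the same route as the paper: the same short exact sequence $0\to(\mathcal{H}:F)\to\mathcal{H}\to(\mathcal{H},x_F)\to 0$, the same identification of $(\mathcal{H}:F)$ with $\mathcal{H}_{S_\mu}$ for the lower bound, the same decomposition of $(\mathcal{H},x_F)$ into the isolated vertex $x_F$ together with the $k+1$ disjoint open strings on $n_1,\dots,n_{k+1}$ vertices, and the same appeal to \Cref{lem:<2k} to absorb the extra $+1$. The only difference is cosmetic: you phrase the bound as $\pd(R/I)\le\max\{\pd(R/(I:x_F)),\pd(R/(I,x_F))\}$ while the paper writes it as $\max\{\pd(\mathcal{H}_{S_\mu}),\pd(\mathcal{H}_{V_F})+1\}$, which is the same estimate.
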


\begin{proof}
Notice that $\mu=k+\sum_{i=1}^{k+1}n_{i}$ and $\pd(\mathcal{H}_{S_{\mu}})=\mu-\left\lfloor \frac{\mu}{3}\right\rfloor =k+\sum_{i=1}^{k+1}n_{i}-\left\lfloor \frac{k+\sum_{i=1}^{k+1}n_{i}}{3}\right\rfloor $.
Now consider the short exact sequence
\[
0\rightarrow (\mathcal{H}:F) \rightarrow \mathcal{H}\rightarrow(\mathcal{H},x_F)\rightarrow0
\]
 where $x_F$ is the variable corresponding to $F$. We first observe
that $(\mathcal{H}:F)=\mathcal{H}_{S_{\mu}}$ hence $\pd(\mathcal{H}_{S_{\mu}})\leq\pd(\mathcal{H})$ by Theorem \ref{LaundryList1} \ref{LMa2-2.9c}.
Let $\mathcal{H}_{V_{F}}=\mathcal{H}_{S_{\mu}}\cap(V\backslash V_{F})$ be the hypergraph
obtained by removing all the vertices $v_{1},...,v_{k}$. Then $\mathcal{H}_{V_{F}}\cup\{x_F\}=(\mathcal{H},x_F)$.
Notice that $\mathcal{H}_{V_{F}}$ is union of $k+1$-strings such that the $i$-th string
has $n_{i}$ vertices so by  \Cref{LaundryList1} \ref{pdFormulaString}
we get that
$\pd(\mathcal{H}_{V_{F}})=\sum_{i=1}^{k+1}(n_{i}-\left\lfloor
  \frac{n_{i}}{3}\right\rfloor )$. Once we show that $\pd(\mathcal{H}_{V_{F}})<\pd(\mathcal{H}_{S_{\mu}})$,
then by the short exact sequence, we have \[\pd(\mathcal{H}_{S_{\mu}})\leq\pd(\mathcal{H})\leq\max\{\pd(\mathcal{H}_{S_{\mu}}),\pd(\mathcal{H}_{V_{F}})+1\}=\pd(\mathcal{H}_{S_{\mu}}).\]
To show that $\pd(\mathcal{H}_{V_{F}})<\pd(\mathcal{H}_{S_{\mu}})$ it is sufficient to show that 
\[
\sum_{i=1}^{k+1}(n_{i}-\left\lfloor \frac{n_{i}}{3}\right\rfloor )< k+\sum_{i=1}^{k+1}n_{i}-\left\lfloor \frac{k+\sum_{i=1}^{k+1}n_{i}}{3}\right\rfloor 
\]
 which is shown in \Cref{lem:<2k}.
%  equivalent to show 
% \[
% \left\lfloor \frac{\sum_{i=1}^{k+1}n_{i}+k}{3}\right\rfloor <k+\sum_{i=1}^{k+1}\left\lfloor \frac{n_{i}}{3}\right\rfloor
% \]
%  or 
% \[
% \left\lfloor \frac{\sum_{i=1}^{k+1}r_{i}+k}{3}\right\rfloor <k,
% \]
%  which is true by the assumption $\sum_{i=1}^{k+1}r_{i}<2k$.
 \end{proof}

\begin{proposition}
\label{String2k+1} We adapt  \Cref{stringwithEdge}.  If $\sum_{i=1}^{k+1}r_{i}\geq2k+1$,
then $\pd(\mathcal{H})=\pd(\mathcal{H}_{S_{\mu}})$.
\end{proposition}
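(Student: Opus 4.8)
The plan is to prove $\pd(\mathcal{H}_{S_\mu})\le\pd(\mathcal{H})$ and $\pd(\mathcal{H})\le\pd(\mathcal{H}_{S_\mu})$ separately, the second by induction on $k$. Since each $r_i\le 2$ and $\sum_{i=1}^{k+1}r_i\ge 2k+1$, every $r_i\ge 1$ and at most one $r_i$ equals $1$; reversing the string if necessary --- which only reverses $n_1,\dots,n_{k+1}$ and changes nothing else --- we may assume $r_1=2$, so that \Cref{lem:>2k+1} applies. In particular $n_1=3l_1+2\ge 2$, so $v_1$ is an open vertex and $w_1,\dots,w_{i_1-1}$ is a branch of length $n_1$ departing from $v_1$. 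The lower bound is immediate, since $\mathcal{H}_{S_\mu}=\mathcal{H}:F=\mathcal{H}\setminus F$ is a subhypergraph of $\mathcal{H}$ on the same vertex set: $\pd(\mathcal{H}_{S_\mu})\le\pd(\mathcal{H})$ by \Cref{LaundryList1} \ref{LMa2-2.9c}.

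For the upper bound, consider first the base case $k=2$, where $\mathcal{H}$ is $1$-dimensional and $r_2,r_3\in\{1,2\}$. Applying \Cref{LMa247}(b) to the branch of length $n_1\equiv 2\pmod 3$ departing from $v_1$ gives $\pd(\mathcal{H})=\pd(\mathcal{H}_{v_1})$, and $\mathcal{H}_{v_1}$ is the disjoint union of the open string on $w_1,\dots,w_{i_1-1}$ with the string on $w_{i_1+1},\dots,w_\mu$ in which $v_2$ has been made closed. Applying \Cref{LMa247} once more to the branch departing from $v_2$ on whichever side has length $\equiv 2\pmod 3$ (one does, since $r_2=r_3=1$ would force $\sum r_i=4$) reduces this to a disjoint union of open strings, so by \Cref{LaundryList1} \ref{pdFormulaString} and \Cref{lem:>2k+1}(1),
\[
\pd(\mathcal{H})=\sum_{i=1}^{k+1}\left(n_i-\left\lfloor\frac{n_i}{3}\right\rfloor\right)=\mu-\left\lfloor\frac{\mu}{3}\right\rfloor=\pd(\mathcal{H}_{S_\mu}).
\]

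For $k\ge 3$, we use the short exact sequence of \Cref{shortexact} at the vertex $v_1$,
\[
0\to\mathcal{Q}_{v_1}\to\mathcal{H}_{v_1}\to\mathcal{H}\to 0 ,
\]
so that $\pd(\mathcal{H})\le\max\{\pd(\mathcal{H}_{v_1}),\,\pd(\mathcal{Q}_{v_1})+1\}$. Deleting $v_1$ disconnects the branch $w_1,\dots,w_{i_1-1}$, which becomes an open string on $n_1$ vertices, from the string on $w_{i_1+1},\dots,w_\mu$, which now carries an edge on the $k-1$ vertices $v_2,\dots,v_k$ with between-vertex counts $n_2,\dots,n_{k+1}$; call this second piece $\widetilde{\mathcal{H}}$. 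Since $r_1=2$ forces $\sum_{i=2}^{k+1}r_i\ge 2(k-1)+1$, the inductive hypothesis applies to $\widetilde{\mathcal{H}}$ and gives $\pd(\widetilde{\mathcal{H}})=N-\left\lfloor N/3\right\rfloor$ with $N=(k-1)+\sum_{i=2}^{k+1}n_i$. Then $\pd(\mathcal{H}_{v_1})=\bigl(n_1-\lfloor n_1/3\rfloor\bigr)+\bigl(N-\lfloor N/3\rfloor\bigr)$, and using $\mu=n_1+N+1$ together with the floor arithmetic behind \Cref{lem:>2k+1}(1) this equals $\mu-\lfloor\mu/3\rfloor=\pd(\mathcal{H}_{S_\mu})$.

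Finally we must bound $\pd(\mathcal{Q}_{v_1})$, where $\mathcal{Q}_{v_1}=\mathcal{H}(I_{v_1}:m_{v_1})$ and $m_{v_1}$ is the product of $x_F$ with the two string-edge variables at $v_1$. Unwinding this colon: $x_F$ divides no surviving minimal generator, the generators at the two string-neighbours of $v_1$ collapse to single variables, and these in turn force their own string-neighbours' generators to be non-minimal. Reading off what remains identifies $\mathcal{Q}_{v_1}$ as a disjoint union of two isolated closed vertices and two open strings, on $n_1-2$ and $(k-3)+\sum_{i=2}^{k+1}n_i$ vertices respectively; hence $\pd(\mathcal{Q}_{v_1})$ is exactly the left-hand side of \Cref{lem:>2k+1}(2), whose right-hand side equals $\pd(\mathcal{H}_{S_\mu})$ by part (1). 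Therefore $\pd(\mathcal{Q}_{v_1})+1\le\pd(\mathcal{H}_{S_\mu})=\pd(\mathcal{H}_{v_1})$, and the short exact sequence gives $\pd(\mathcal{H})\le\pd(\mathcal{H}_{S_\mu})$; with the lower bound this proves $\pd(\mathcal{H})=\pd(\mathcal{H}_{S_\mu})$. The main obstacle will be precisely this identification of $\mathcal{Q}_{v_1}$: the colon $I_{v_1}:m_{v_1}$ both deletes the edge $F$ and turns several string generators into variables, so one must track carefully which vertices become closed or isolated and how the resulting non-minimality propagates along the string, and \Cref{lem:>2k+1}(2) is exactly the arithmetic that makes the answer fall below $\pd(\mathcal{H}_{S_\mu})$.
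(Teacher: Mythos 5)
Your argument is correct and follows essentially the same route as the paper: the same short exact sequence $0\to\mathcal{Q}_{v_1}\to\mathcal{H}_{v_1}\to\mathcal{H}\to 0$ at $v_1$ (after normalizing $r_1=2$), the same induction on $k$ identifying $\mathcal{H}_{v_1}$ as an $n_1$-string together with a string carrying a $(k-1)$-vertex edge and $\mathcal{Q}_{v_1}$ as two isolated vertices plus strings of lengths $n_1-2$ and $k-3+\sum_{i=2}^{k+1}n_i$, and the same appeal to \Cref{lem:>2k+1} to compare the two. The only real difference is the base case $k=2$, which you dispatch with two applications of \Cref{LMa247} where the paper instead runs the short exact sequence through a direct four-case computation; both yield $\sum_{i=1}^{3}\bigl(n_i-\lfloor n_i/3\rfloor\bigr)$.
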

\begin{proof}
 We first notice that $\pd(\mathcal{H}_{S_{\mu}})=k+\sum_{i=1}^{k+1}n_{i}-\left\lfloor \frac{k+\sum_{i=1}^{k+1}n_{i}}{3}\right\rfloor =\sum_{i=1}^{k+1}(n_{i}-\left\lfloor \frac{n_{i}}{3}\right\rfloor )$ by \Cref{LaundryList1} \ref{pdFormulaString} and \Cref{lem:>2k+1} (1).  Since $\sum_{i=1}^{k+1}r_{i}\geq2k+1$, $r_{i}<3$, and $k>1$, we have at most one $r_{i}$ is equal to $1$. We may assume $r_{1}=2$. Let $\mathcal{H}_{v_{1}}=\mathcal{H}_{v}$
be the hypergraph where we remove the vertex $v_{1}=v$ from $\mathcal{H}$
and let $\mathcal{H}_{v}:v_{1}=\mathcal{Q}_{v}$ be the hypergraph $\mathcal{H}(I_{v}:m_{v})$ where
$I_{v}=I(\mathcal{H}_{v})$ and $m_{v}$ is the monomial corresponding to the
vertex $v$. We have a short exact sequence 
\[
0\rightarrow \mathcal{Q}_{v}\rightarrow \mathcal{H}_{v}\rightarrow \mathcal{H}\rightarrow0.
\]
 We claim that in this case $\pd(\mathcal{H}_{S_{\mu}})=\pd(\mathcal{H}_{v})>\pd(\mathcal{Q}_{v})$, then by \Cref{shortexact},
% Note with this claim, and the facts obtained from the short exact sequence on the projective dimensions on modules, $\pd (\mathcal{H}_{v})\leq\text{max}\{\pd (Q_{v}),\pd (\mathcal{H})\}$
% and  \[\pd(\mathcal{H})\leq\text{max}\{\pd(\mathcal{H}_{v}),\pd(\mathcal{Q}_{v})+1\}=\pd(\mathcal{H}_v),\]
we conclude that $\pd(\mathcal{H}_{S_{\mu}})=\pd(\mathcal{H}_{v})=\pd(\mathcal{H})$. 

To see the proof of the claim, we use induction on $k$. When $k=2$, $\mathcal{H}_{v}$ is a union of two strings of length $n_{2}+n_{3}+1$ and  $n_{1}$. When $n_2\geq2$ and $n_3\geq2$, the string of length $n_{2}+n_{3}+1$ has two open strings with $n_{2}-1$ and $n_{3}-1$ open vertices. When ($n_2=1$ and $r_3=2$), or ($n_3=1$ and $r_2=2$), the string of length $n_{2}+n_{3}+1$ has exactly $3$ closed vertices at the ends of string and all other vertices are open.
By the work of \cite{LMa1}, \Cref{LaundryList1} \ref{pdFormulaString},
we have either
\begin{align*}
\pd(\mathcal{H}_{v}) & =n_{1}-\left\lfloor \frac{n_{1}}{3}\right\rfloor +n_{2}+n_{3}+1-2-\left\lfloor \frac{n_{2}-2}{3}\right\rfloor -\left\lfloor \frac{n_{3}-2}{3}\right\rfloor +1\\
 & =\sum_{i=1}^{3}(n_{i}-\left\lfloor \frac{n_{i}}{3}\right\rfloor ) =\pd(\mathcal{H}_{S_{\mu}})
\end{align*}
when $n_2\geq2$, $n_3\geq2$, $r_2=2$ and $r_3=2$, or
\begin{align*}
\pd(\mathcal{H}_{v}) & =n_{1}-\left\lfloor \frac{n_{1}}{3}\right\rfloor +n_{2}+n_{3}+1-2-\left\lfloor \frac{n_{2}-2}{3}\right\rfloor -\left\lfloor \frac{n_{3}-2}{3}\right\rfloor \\
 & =\sum_{i=1}^{3}(n_{i}-\left\lfloor \frac{n_{i}}{3}\right\rfloor )=\pd(\mathcal{H}_{S_{\mu}})
\end{align*}
when $n_2\geq2$, $n_3\geq2$, ($r_2=1$ and $r_3=2$) or ($r_2=2$ and $r_3=1$), or
 \begin{align*}
\pd(\mathcal{H}_{v}) & =n_{1}-\left\lfloor \frac{n_{1}}{3}\right\rfloor +n_{2}+n_{3}+1-1 -\left\lfloor \frac{n_{3}-2}{3}\right\rfloor \\
 & =\sum_{i=1}^{3}(n_{i}-\left\lfloor \frac{n_{i}}{3}\right\rfloor )=\pd(\mathcal{H}_{S_{\mu}})
\end{align*}
when $n_2=1$ and $r_3=2$, or 
\begin{align*}
\pd (\mathcal{H}_{v}) & =n_{1}-\left\lfloor \frac{n_{1}}{3}\right\rfloor +n_{2}+n_{3}+1-1-\left\lfloor \frac{n_{2}-2}{3}\right\rfloor \\
 & =\sum_{i=1}^{3}(n_{i}-\left\lfloor \frac{n_{i}}{3}\right\rfloor )=\pd(\mathcal{H}_{S_{\mu}})
\end{align*}
when $n_3=1$ and $r_2=2$. 
On the other hand, $\mathcal{Q}_{v}$ is a union of two isolated vertices
and two strings of length, $n_{1}-2$, $n_{2}-2+n_{3}+1$, hence we
have 
\begin{align*}
\pd(\mathcal{Q}_{v}) & =2+n_{1}-2-\left\lfloor \frac{n_{1}-2}{3}\right\rfloor +n_{2}+n_{3}-1-\left\lfloor \frac{n_{2}+n_{3}-1}{3}\right\rfloor \\
 & \leq \sum_{i=1}^{3}(n_{i}-\left\lfloor \frac{n_{i}}{3}\right\rfloor )-1 <\pd(\mathcal{H}_{v}).
\end{align*}
For the second inequality above, we use the fact that $r_2+r_3\geq
3$. 

For the case when $k>2$, we use the same exact sequence. Here the hypergraph
$\mathcal{H}_{v}$ is a union of a string of $n_{1}$ vertices and
a string of length $\mu'=k-1+\sum_{i=2}^{k+1}n_{i}$ with a $k-2$-dimensional edge of $k-1$ vertices such that $\sum_{i=2}^{k+1}r_{i}\geq2(k-1)+1$.
By the induction hypothesis and \Cref{LaundryList1} \ref{pdFormulaString},
$\pd(\mathcal{H}_{v})=n_{1}-\left\lfloor \frac{n_{1}}{3}\right\rfloor +\pd(\mathcal{H}_{S_{\mu'}})=\sum_{i=1}^{k+1}(n_{i}-\left\lfloor \frac{n_{i}}{3}\right\rfloor) =\pd(\mathcal{H}_{S_\mu})$.
On the other hand, $\mathcal{Q}_{v}$ is a union of two isolated closed vertices
and two strings of length $n_{1}-2$ and $k-3+\sum_{i=2}^{k+1}n_{i}$.
Hence we have 
\begin{align*}
\pd(\mathcal{Q}_{v}) & =2+n_{1}-2-\left\lfloor \frac{n_{1}-2}{3}\right\rfloor +k-3+\sum_{i=2}^{k+1}n_{i}-\left\lfloor \frac{k-3+\sum_{i=2}^{k+1}n_{i}}{3}\right\rfloor \\
& =n_{1}-\left\lfloor \frac{n_{1}-2}{3}\right\rfloor +k-2+\sum_{i=2}^{k+1}n_{i}-\left\lfloor \frac{k+\sum_{i=2}^{k+1}n_{i}}{3}\right\rfloor \\
 & <\sum_{i=1}^{k+1}(n_{i}-\left\lfloor \frac{n_{i}}{3}\right\rfloor)=\pd\mathcal{H}_{v}
\end{align*}
where the inequality is shown in \Cref{lem:>2k+1} (2).  We conclude $\pd(\mathcal{H})=\pd(\mathcal{H}_{v})=\pd(\mathcal{H}_{S_{\mu}})$.
\end{proof}

Now we want to deal with the case when $\sum_{i=1}^{k+1}r_{i}=2k$.  In this case we get two different outcomes, and it
will be necessary to prove a number of lemmas that will allow us to
work with the special case.  The primary strategy is using induction on the number of vertices of the extra edge $F$, and the short exact sequences with \Cref{LMa247} to remove vertices or edges to reduce cases into smaller cases. Lemmas are ordered in a way that later lemmas are built or proven with the previous lemmas' conclusions.

First we will need to to prove some results about when the spacing
measured by the $n_i$ is equivalent to 2 modulo 3.  The following lemma deals with the case where one of the ends of the
string coincides with a vertex from $F$.

\begin{lemma}
\label{Stringedge}We adapt  \Cref{stringwithEdge}. Suppose the end vertices of the string are $w_{1}$ and $w_{\mu}$ where $w_{1}$
is closed and $w_{\mu}$ is open, and $w_{\mu}=v_{k}$. 
If $n_{i}=2+3l_{i}$ for $i=1,...,k$, then the projective dimension of $\mathcal{H}$ is \[ 2k+2\sum_{i=1}^{k}l_{i}=k+\sum_{i=1}^{k}n_{i}-\left\lfloor \frac{k+\sum_{i=1}^{k}n_{i}}{3}\right\rfloor \]
and \[\reg(\mathcal{H})\leq k+\sum_{i=1}^{k}l_{i}.\]
\end{lemma}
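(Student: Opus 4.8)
The plan is to induct on $k$, the number of vertices of $F$. The first step is pure bookkeeping: since $n_i=2+3l_i$ for $i=1,\dots,k$ and $n_{k+1}=0$, we have $\mu=k+\sum_{i=1}^{k}n_i=3\bigl(k+\sum_{i=1}^{k}l_i\bigr)$, so the claimed value $2k+2\sum_{i=1}^{k}l_i$ equals $\mu-\left\lfloor\frac{\mu}{3}\right\rfloor$ and $k+\sum_{i=1}^{k}l_i$ equals $\left\lceil\frac{\mu}{3}\right\rceil$; these are exactly the two invariants of an open string on $\mu$ vertices. When $k=1$ (so $F=\{v_1\}=\{w_\mu\}$ is just the singleton closing $w_\mu$), $\mathcal{H}$ is literally an open string on $\mu$ vertices and the statement is \Cref{LaundryList1} \ref{pdFormulaString} together with its regularity formula.

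For the inductive step ($k\ge 2$) I would delete the vertex $v_1$ and use the short exact sequence $0\to\mathcal{Q}_{v_1}\to\mathcal{H}_{v_1}\to\mathcal{H}\to 0$ of \Cref{shortexact}, whose first term is graded-shifted by $\deg m_{v_1}=3$. Deleting $m_{v_1}$ disconnects the string at $v_1$, and since the two string-edges at $v_1$ become singletons, $w_{i_1-1}$ and $w_{i_1+1}$ become closed; hence $\mathcal{H}_{v_1}$ is the disjoint union of an open string on the $n_1$ vertices $w_1,\dots,w_{i_1-1}$ and of the hypergraph on $w_{i_1+1},\dots,w_\mu$ carrying the edge $F\smsm\{v_1\}$ on $k-1$ vertices, with closed endpoint $w_{i_1+1}$ and open endpoint $v_k=w_\mu$ — precisely the configuration of this lemma with parameter $k-1$ and spacings $n_2,\dots,n_k,0$ (for $k=2$ the second piece degenerates to an open string on $n_2+1$ vertices). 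By the induction hypothesis and the disjoint-union formulas, $\pd(\mathcal{H}_{v_1})=(n_1-l_1)+2\bigl((k-1)+\sum_{i=2}^{k}l_i\bigr)=2k+2\sum_{i=1}^{k}l_i$ and $\reg(\mathcal{H}_{v_1})\le\left\lceil\frac{n_1}{3}\right\rceil+\bigl((k-1)+\sum_{i=2}^{k}l_i\bigr)=k+\sum_{i=1}^{k}l_i$.

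It then remains to control $\mathcal{Q}_{v_1}=\mathcal{H}(I_{v_1}:m_{v_1})$. Coloning by $m_{v_1}$ (the product of the three variables at the edges through $v_1$) turns the generators of the three neighbors of $v_1$ — namely $w_{i_1-1}$ and $w_{i_1+1}$ along the string, and $v_k=w_\mu$ along $F$ — into single variables, and minimalizing triggers a short cascade that deletes the generators of $w_{i_1-2}$, $w_{i_1+2}$ and $w_{\mu-1}$ and closes their surviving neighbors. The outcome is that $\mathcal{Q}_{v_1}$ is a disjoint union of three isolated closed vertices ($w_{i_1-1},w_{i_1+1},w_\mu$) and two open strings, on $n_1-2\equiv 0$ and $\mu-i_1-4\equiv 2\pmod 3$ vertices; a direct count gives $\pd(\mathcal{Q}_{v_1})=2k-1+2\sum_{i=1}^{k}l_i$ and $\reg(\mathcal{Q}_{v_1})=k-2+\sum_{i=1}^{k}l_i$. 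Since $\pd(\mathcal{Q}_{v_1})<\pd(\mathcal{H}_{v_1})$, \Cref{shortexact} yields $\pd(\mathcal{H})=\pd(\mathcal{H}_{v_1})=2k+2\sum_{i=1}^{k}l_i$; and the graded sequence gives $\reg(\mathcal{H})\le\max\{\reg(\mathcal{H}_{v_1}),\ \reg(\mathcal{Q}_{v_1})+\deg m_{v_1}-1\}=\max\{k+\sum l_i,\ (k-2+\sum l_i)+2\}=k+\sum_{i=1}^{k}l_i$, which is the asserted bound.

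The step I expect to be the real obstacle is the explicit identification of $\mathcal{Q}_{v_1}$: the colon creates several single-variable generators, and following the induced cascade of non-minimal generators — which vertices disappear, which become isolated and closed, and the exact lengths of the two surviving open strings, especially in degenerate situations such as $n_i=2$ or $k=2$ where one of these strings is empty — requires care. Everything else reduces to the floor/ceiling arithmetic collected just before the lemma (in the spirit of \Cref{lem:<2k} and \Cref{lem:>2k+1}). As a cross-check, in the $1$-dimensional cases $k\le 3$ one may instead invoke \Cref{LMa247} with the branch $w_1,\dots,w_{i_1-1}$, whose length $n_1$ is $\equiv 2\pmod 3$.
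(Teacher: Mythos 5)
Your proposal is correct and follows essentially the same route as the paper: induction on $k$ with base case the open string, the short exact sequence $0\to\mathcal{Q}_{v_1}\to\mathcal{H}_{v_1}\to\mathcal{H}\to 0$, the identification of $\mathcal{H}_{v_1}$ as an open $n_1$-string plus the $(k-1)$-vertex configuration, and the description of $\mathcal{Q}_{v_1}$ as three isolated closed vertices plus two open strings of lengths $n_1-2$ and $k-5+\sum_{i=2}^{k}n_i$ (your $\mu-i_1-4$ agrees with the paper's expression), followed by the same $\pd$/$\reg$ arithmetic and the regularity bound from the graded sequence. The "obstacle" you flag — tracing the colon cascade — is handled in the paper exactly as you describe it, so no gap remains.
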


\begin{proof}
We use induction on $k$. When $k=1$, we have $w_{\mu}=v_{1}$. In this
case the edge has only one vertex $v_{1}$ which forces $w_{\mu}=v_{1}$
to become a closed
vertex.  Also, $\mathcal{H}$ becomes a string of length $\mu=n_{1}+1$,
so by \Cref{LaundryList1} \ref{pdFormulaString}, $\pd(\mathcal{H})=n_{1}+1-\left\lfloor \frac{n_{1}+1}{3}\right\rfloor =2+2l_{1}$
and $\reg(\mathcal{H})=\left\lceil \frac{n_{1}+1}{3}\right\rceil
=1+l_{1}$.

For the induction step, we consider the short exact sequence
\[
0\rightarrow \mathcal{H}_{v_{1}}:v_{1}=\mathcal{Q}_{v_{1}} \rightarrow \mathcal{H}_{v_{1}}\rightarrow  \mathcal{H} \rightarrow0.
\]
We will show $\pd(\mathcal{H}_{v_{1}})=2k+2\sum_{i=1}^{k}l_{i}>\pd(\mathcal{Q}_{v_1})$, then by \Cref{shortexact}, we conclude $\pd(\mathcal{H})=\pd(\mathcal{H}_{v_{1}})=2k+2\sum_{i=1}^{k}l_{i}$. 

Since $k>1$, the vertex $v_{1}$ corresponds to a monomial of
degree $3$. Notice that $\mathcal{H}_{v_{1}}$ is the union of a string of length
$n_{1}$ and a hypergraph with exactly the same structure of $\mathcal{H}$
(i.e. closed vertex on one end of string and an open vertex coinciding
with a vertex of the higher dimensional edge at the other end)
such that it has an edge with $k-1$ vertices. By \Cref{LaundryList1} \ref{pdFormulaString}
and induction hypothesis, we have \[\pd(\mathcal{H}_{v_{1}})=n_{1}-\left\lfloor \frac{n_{1}}{3}\right\rfloor +2(k-1)+2\sum_{i=2}^{k}l_{i}=2k+2\sum_{i=1}^{k}l_{i}\]
and \[\reg(\mathcal{H}_{v_{1}})\leq\left\lceil \frac{n_{1}}{3}\right\rceil +k-1+\sum_{i=2}^{k}l_{i}=k+\sum_{i=1}^{k}l_{i}.\]

Moreover, the hypergraph $\mathcal{Q}_{v_{1}}$ is a union of three isolated vertices
and two strings of length $n_{1}-2$ and $n_{2}-2+n_{k}-1+k-2+\sum_{i=3}^{k-1}n_{i}$.
Therefore by \Cref{LaundryList1} \ref{pdFormulaString} with the fact that $\sum_{i=2}^{k}r_{i}=2(k-1)$, we have 
\begin{align*}
\pd(\mathcal{Q}_{v}) &=3+n_{1}-2-\left\lfloor \frac{n_{1}-2}{3}\right\rfloor
  +k-5+\sum_{i=2}^{k}n_{i}-\left\lfloor
  \frac{k-5+\sum_{i=2}^{k}n_{i}}{3}\right\rfloor \\
  &=2l_{1}
  +k-2+\sum_{i=2}^{k}(3l_{i}+2)-\left\lfloor
  \frac{k-5+\sum_{i=2}^{k}(3l_{i}+2)}{3}\right\rfloor\\
  &=2l_{1}
  +k-2+2(k-1)+\sum_{i=2}^{k}(3l_{i})-\left\lfloor
  \frac{k-5+2(k-1)+\sum_{i=2}^{k}(3l_{i})}{3}\right\rfloor\\
  &=3k-4+2\sum_{i=1}^{k}l_{i}-k+3=2k+2\sum_{i=1}^{k}l_{i}-1
\end{align*}

and \[\reg(\mathcal{Q}_{v})=\left\lceil \frac{n_{1}-2}{3}\right\rceil +\left\lceil \frac{k-5+\sum_{i=2}^{k}n_{i}}{3}\right\rceil =k+\sum_{i=1}^{k}l_{i}-2.\]
We have shown \[\pd(\mathcal{H})=\pd(\mathcal{H}_{v_{1}})=2k+2\sum_{i=1}^{k}l_{i}.\] Using the short exact sequence and \Cref{LaundryList1} \ref{RegFormula2}, we have 
and \[\reg(\mathcal{H})\leq\max\{k+\sum_{i=1}^{k}l_{i},k+\sum_{i=1}^{k}l_{i}-2+3-1\}=k+\sum_{i=1}^{k}l_{i}.\] 
\end{proof}

In the next lemma we need to consider a specific case which is necessary for the proof of
Lemma \ref{SubStinky}.  Specifically this will address
$\mathcal{H}_{v_2}$ in Lemma \ref{SubStinky} where $\mathcal{H}$ is a a string together with
  an edge consisting of $k$ vertices. Those two special cases are shown in \Cref{ex:substinky} later. 

% {\bf THE ASSUMPTION Of the following lemma is changed as I cannot understand what we mean, hence I just go back use the statement of lemma and write what we wanted. The proof may need more changes.}

\begin{lemma}\label{2Stringedge}
% Let $\mathcal{H}$ be a hypergraph such that it has two
% strings attached with an edge of $F$ consisting of $k-1$
% vertices, $v_1,...,v_{k-1}$, and $k>2$. Suppose the end vertices of the first string are $w_{1}$
% and $w_{t}$ with $w_{1}$ open and $w_{t}$ closed.  The  vertices of
% the second
% string are $u_{1}$ and $u_{s}$ with $u_{1}$ closed and
% $u_{s}$ open. Moreover, let $w_{1}=v_{1}$ and $u_{s}=v_{k-1}$. Assume
% there are $n_{2}$ vertices between $u_{1}$ and $v_{2}$ including
% $u_{1}$ and $n_{i}$ vertices between $v_{i-1}$ and $v_{i}$ for
% $i=3,...,k-1$. Let $n_{1}=t-1$ and if $n_{i}=2+3l_{i}$ for all $i=1,...,k-1$
% where $l_{i}$ are some non-negative integers. Then projective dimension
% of $\mathcal{H}$ is $2(k-1)+2\sum_{i=1}^{k-1}l_{i}$ and $\reg(\mathcal{H})\leq k-1+\sum_{i=1}^{k-1}l_{i}$.

We adapt  \Cref{stringwithEdge}. Assume $w_{1}=v_{1}$ and $w_{\mu}=v_{k}$, i.e. $n_1=n_{k+1}=0$, and both $w_{1}=v_{1}$ and $w_{\mu}=v_{k}$ are open vertices. If $n_{i}=2+3l_{i}$ for $i=2,...,k$.  Then the projective dimension of $\mathcal{H}_{v_2}$ is $2(k-1)+2\sum_{i=2}^{k}l_{i}$ and $\reg(\mathcal{H}_{v_2})\leq k-1+\sum_{i=2}^{k}l_{i}$.

\end{lemma}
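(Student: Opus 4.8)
The plan is to induct on $k$, the number of vertices of $F$. For the base cases $k=2$ and $k=3$ one checks directly that $\mathcal{H}_{v_2}$ is a plain open string: when $k=2$ the hypergraph $\mathcal{H}$ is an open $\mu$-cycle, and deleting the generator $m_{v_2}$ turns $F$ into the singleton $\{w_1\}$ and the string edge at $v_2$ into a singleton, leaving an open string on $\mu-1=n_2+1=3(1+l_2)$ vertices; when $k=3$ deleting $m_{v_2}$ splits the string at $v_2$, but the surviving edge $F$ becomes the $1$-dimensional edge $\{w_1,w_\mu\}$, which simply reconnects the two halves into a single open string on $n_2+n_3+2=3(2+l_2+l_3)$ vertices. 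In both cases \Cref{LaundryList1} \ref{pdFormulaString} gives exactly $\pd(\mathcal{H}_{v_2})=2(k-1)+2\sum l_i$ and $\reg(\mathcal{H}_{v_2})=k-1+\sum l_i$.

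For $k\ge 4$ I would run the short exact sequence of \Cref{shortexact} at the vertex $v_3$,
\[
0\rightarrow\mathcal{Q}_{v_3}\rightarrow(\mathcal{H}_{v_2})_{v_3}\rightarrow\mathcal{H}_{v_2}\rightarrow0.
\]
The reason for choosing $v_3$ is that, since $v_2$ has already been removed, deleting $v_3$ isolates the block of $n_3$ open vertices lying between $v_2$ and $v_3$ as a standalone open string, and what remains is precisely a hypergraph of the same shape as in \Cref{stringwithEdge} but with an edge on $k-1$ vertices whose inner gaps are $n_2,n_4,n_5,\dots,n_k$ (the $n_3$-gap has "fallen out"); in particular the induction hypothesis applies to it. Hence $(\mathcal{H}_{v_2})_{v_3}$ is a disjoint union of an open string on $n_3$ vertices and a hypergraph governed by the $(k-1)$-case, and combining \Cref{LaundryList1} \ref{pdFormulaString}, additivity of $\pd$ and $\reg$ over disjoint unions, and the floor identities recorded just before \Cref{lem:<2k}, one obtains $\pd\bigl((\mathcal{H}_{v_2})_{v_3}\bigr)=2(k-1)+2\sum_{i=2}^{k}l_i$ and $\reg\bigl((\mathcal{H}_{v_2})_{v_3}\bigr)\le k-1+\sum_{i=2}^{k}l_i$, i.e.\ exactly the target values.

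It then remains to analyze $\mathcal{Q}_{v_3}=\mathcal{H}(I_{v_3}:m_{v_3})$. The monomial $m_{v_3}$ has degree $3$ (it carries $x_F$ together with the two string variables incident to $v_3$), so colonizing by it deletes the edge $F$ and the two singleton string edges at $v_3$; after re-minimalizing, $\mathcal{Q}_{v_3}$ breaks up as a disjoint union of shorter open strings and isolated closed vertices coming from the three blocks of $\mathcal{H}_{v_2}$ (the $n_3$-block, the piece attached to $w_1$, and the piece attached to $w_\mu$). A direct count via \Cref{LaundryList1} \ref{pdFormulaString} and the same floor identities gives $\pd(\mathcal{Q}_{v_3})=2(k-1)+2\sum_{i=2}^{k}l_i-1<\pd\bigl((\mathcal{H}_{v_2})_{v_3}\bigr)$ and $\reg(\mathcal{Q}_{v_3})=k-3+\sum_{i=2}^{k}l_i$. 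Plugging the first pair of facts into \Cref{shortexact} gives $\pd(\mathcal{H}_{v_2})=\pd\bigl((\mathcal{H}_{v_2})_{v_3}\bigr)=2(k-1)+2\sum_{i=2}^{k}l_i$, and plugging the regularity estimates into \Cref{LaundryList1} \ref{RegFormula2} (applied to $I_{v_3}$ and $m_{v_3}$, with $\deg m_{v_3}=3$) gives $\reg(\mathcal{H}_{v_2})\le\max\{k-1+\sum l_i,\ (k-3+\sum l_i)+2\}=k-1+\sum_{i=2}^{k}l_i$.

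The main obstacle is the bookkeeping in the last step: one must pin down the minimal generating set of $I_{v_3}:m_{v_3}$ exactly, since colonizing by a degree-$3$ monomial makes several generators redundant, so the naive picture of $\mathcal{Q}_{v_3}$ has to be corrected, and one must separately verify the small-$k$ degenerate subcases (for instance $k=4$ with $n_4=2$, where one of the resulting strings is empty and only isolated vertices remain) so that the stated $\pd$ and $\reg$ counts hold uniformly. By comparison the identification of $\mathcal{H}_{v_2}$ and the base cases in the first two paragraphs are essentially routine once the relevant hypergraphs are drawn.
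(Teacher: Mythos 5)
Your argument is correct, and it runs parallel to the paper's proof but pivots on a different vertex and a different reduction target. The paper also inducts on $k$ and uses the colon short exact sequence of \Cref{shortexact} applied to $\mathcal{H}_{v_2}$, but it removes $v_1$ rather than $v_3$: the resulting $\mathcal{H}_{v_1,v_2}$ splits as an open string on $n_2$ vertices together with a hypergraph satisfying the hypotheses of \Cref{Stringedge}, so the paper closes the induction step by citing that already-proven lemma (which conveniently packages both the $\pd$ value and the $\reg$ bound), and its colon piece $\mathcal{H}_{v_1,v_2}:v_1$ is only two strings plus two isolated vertices. Your choice of $v_3$ instead makes $(\mathcal{H}_{v_2})_{v_3}$ split as an open string on $n_3$ vertices together with a genuine $(k-1)$-instance of this same lemma (with gaps $n_2,n_4,\dots,n_k$), so your induction is self-referential and needs the extra base case $k=3$ (which you correctly identify as a single open string on $n_2+n_3+2$ vertices via the surviving edge $\{w_1,w_\mu\}$); the price is a messier colon hypergraph $\mathcal{Q}_{v_3}$ with three string blocks and four isolated vertices. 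I checked your asserted counts $\pd(\mathcal{Q}_{v_3})=2(k-1)+2\sum_{i=2}^{k}l_i-1$ and $\reg(\mathcal{Q}_{v_3})=k-3+\sum_{i=2}^{k}l_i$, including the degenerate subcases you flag (e.g.\ $n_i=2$, where a block collapses to isolated vertices, one of which carries a degree-$2$ monomial and so still contributes the expected amount to regularity), and they hold; the deferred bookkeeping is at the same level of detail as the paper's own treatment of $\mathcal{H}_{v_1,v_2}:v_1$, so there is no gap, only a trade-off: the paper's route reuses \Cref{Stringedge} and keeps the colon piece simpler, while yours is self-contained within the statement being proved.
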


\begin{proof}
We use induction on $k$. When  $k=2$, $\mathcal{H}_{v_2}$ is a string of length $n_{2}+1$. By \Cref{LaundryList1} \ref{pdFormulaString}, $\pd(\mathcal{H}_{v_2})=n_{2}+1-\left\lfloor \frac{n_{2}+1}{3}\right\rfloor =3l_2+3-\left\lfloor \frac{3l_{2}+3}{3}\right\rfloor =2+2(l_{2})$ and $\reg(\mathcal{H}_{v_2})= \left\lfloor \frac{n_2+1}{3}\right\rfloor=1+l_2 $. 
% When $k=3$, $\mathcal{H}_{v_2}$ is a string of length
% $n_{2}+n_{3}+2$. By \Cref{LaundryList1} \ref{pdFormulaString}, $\pd(\mathcal{H}_{v_2})=n_{2}+n_{3}+2-\left\lfloor \frac{n_{2}+n_{3}+2}{3}\right\rfloor =3l_2+2+3l_3+2+2-\left\lfloor \frac{3l_{2}+3l_{3}+6}{3}\right\rfloor =4+2(l_{2}+l_{3})$ and $\reg(\mathcal{H}_{v_2})= \left\lfloor \frac{n_2+n_3+2}{3}\right\rfloor=2+l_2+l_3 $. 
For the induction step, we consider the short exact sequence 
\[
0\rightarrow \mathcal{H}_{v_{1},v_{2}}:v_{1} \rightarrow \mathcal{H}_{v_{1},v_2}\rightarrow  \mathcal{H}_{v_2} \rightarrow0,
\]
where $\mathcal{H}_{v_{1},v_2}$ is the hypergraph obtained from $\mathcal{H}$ after removing vertices $v_1$ and $v_2$. The proof is almost identical to the Lemma \ref{Stringedge} except
that $v_{1}$ corresponds to a monomial of degree $2$. $\mathcal{H}_{v_{1},v_2}$
is a union of a string of length $n_2$ and a hypergraph satisfying the assumptions of Lemma
\ref{Stringedge}, and $\mathcal{H}_{v_{1},v_{2}}:v_{1}$ is a union of two isolated vertices, and two strings of length $n_2-2$ and $n_{k}-1$ when $k=3$, and $n_{k}-1+k-3+\sum_{i=3}^{k-1}n_i$ when $k>3$. We have $\reg(\mathcal{H}_{v_{1},v_{2}}:v_{1})= k-1+\sum_{i=2}^{k}l_{i}-1$ and $\pd(\mathcal{H}_{v_{1},v_{2}}:v_{1})= 2(k-1)+2\sum_{i=2}^{k}l_{i}-1$. 
Hence we have $\pd(\mathcal{H}_{v_2})=\pd(\mathcal{H}_{v_{1},v_2})=2(k-1)+2\sum_{i=2}^{k}l_{i}$
and $\reg(\mathcal{H}_{v_2})\leq\max\{k-1+\sum_{i=2}^{k}l_{i},k-1+\sum_{i=2}^{k}l_{i}-1
+2-1\}=k-1+\sum_{i=2}^{k}l_{i}$ by \Cref{LaundryList1} \ref{RegFormula2}.
\end{proof}

Now we will use the splitting type result in Lemma \ref{Split} to
finish our necessary results for the hypergraphs which are a string together with
an edge consisting of $k$ vertices, where the spacing between the vertices of the edge are equivalent to 2 modulo 3.

The next lemma deals with the intersection ideal in the special case
where $G_1$ will correspond to one vertex of a larger hypergraph $\mathcal{H}$.

\begin{lemma}\label{IntersectionSplit}
We adapt  \Cref{stringwithEdge}. Assume $k>2$, $w_{1}=v_{1}$ and $w_{\mu}=v_{k}$, i.e. $n_1=n_{k+1}=0$, and both $w_{1}=v_{1}$ and $w_{\mu}=v_{k}$ are open vertices. Furthermore, $n_{i}=2+3l_{i}$ for $i=2,...,k$. Let $G_1 = \{v_2\}$ and $G_2 =
\mathcal{H}_{v_2}$ (which is just $\mathcal{H}$ with $v_2$ removed)
and denote $I_i=I(G_i)$ as the ideals corresponding to $G_i$,
then $I_1 \cap I_2 = m_{v_2}I'$ where $m_{v_2}$ is the monomial corresponding to the vertex $v_2$ and $\mathcal{H}(I')$ has 4 isolated
vertices and 2 strings one of length:
\begin{itemize}
\item  $n_2 -3$ when $n_2 > 2$, or
\item  $0$ if $n_2 = 2$

\end{itemize}
 and the other of length:
\begin{itemize}
\item $k-3 +n_k -1 +n_3-2 +\sum_{i=4}^{k-1} n_i$ when $k > 3$, or
\item $n_3 -3$ when $k = 3$ and when $n_3 > 2$, or
\item $0$ when $k = 3$ and $n_3 = 2$
\end{itemize}

\end{lemma}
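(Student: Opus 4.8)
The plan is to turn the statement into a direct computation of a colon ideal. Since $G_1=\{v_2\}$, the ideal $I_1=(m_{v_2})$ is principal, and for any monomial ideal $J$ one has $(f)\cap J=f\,(J:f)$; hence $I_1\cap I_2=m_{v_2}\,(I_2:m_{v_2})$, so it suffices to set $I'=I_2:m_{v_2}$ and identify the minimal generators of $I'$, and hence $\mathcal H(I')$. The first step is to record the precise shape of $m_{v_2}$: because $k>2$ and $n_2,n_k\ge 2$, the vertex $v_2=w_{i_2}$ lies strictly between the ends of the string (with $i_1=1$ and $i_2=n_2+2$), it is open, and its string neighbours $w_{i_2-1},w_{i_2+1}$ are not vertices of $F$; thus the only edges of $\mathcal H$ through $v_2$ are the two string edges $a=\{w_{i_2-1},w_{i_2}\}$, $b=\{w_{i_2},w_{i_2+1}\}$ and the large edge $F$, so $m_{v_2}=x_a x_b x_F$ has degree $3$.

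Next I would compute $m_w/\gcd(m_w,m_{v_2})$ for every vertex $w\neq v_2$ of $G_2=\mathcal H_{v_2}$. The only generators meeting $\{x_a,x_b,x_F\}$ are $m_{v_1},m_{v_3},\dots,m_{v_k}$ (each loses the factor $x_F$), $m_{w_{i_2-1}}$ (loses $x_a$) and $m_{w_{i_2+1}}$ (loses $x_b$); all others are unchanged. After dividing out, the generators for $v_3,\dots,v_{k-1}$ become ordinary degree-$2$ open string vertices, while those for $v_1$, $w_{i_2-1}$, $w_{i_2+1}$ and $v_k=w_\mu$ become the single variables $x_{\{w_1,w_2\}}$, $x_{\{w_{i_2-2},w_{i_2-1}\}}$, $x_{\{w_{i_2+1},w_{i_2+2}\}}$, $x_{\{w_{\mu-1},w_\mu\}}$. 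Since $x_a,x_b,x_F$ no longer occur, $I_2:m_{v_2}$ is a direct sum over two disjoint variable blocks: a \emph{left block} supported on $w_1,\dots,w_{i_2-1}$ and a \emph{right block} supported on $w_{i_2+1},\dots,w_\mu$. By the previous computation each block is, before minimalization, an \emph{all-open string}, i.e.\ a string $u_1,\dots,u_p$ with edge variables $x_{e_1},\dots,x_{e_{p-1}}$ and generators $x_{e_1},\,x_{e_1}x_{e_2},\dots,x_{e_{p-2}}x_{e_{p-1}},\,x_{e_{p-1}}$.

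I would then prove the elementary minimalization of an all-open string: $x_{e_1}\mid x_{e_1}x_{e_2}$ and $x_{e_{p-1}}\mid x_{e_{p-2}}x_{e_{p-1}}$, so the second and second-to-last generators are redundant and, after discarding them, the remaining generators are minimal; the associated hypergraph then consists of two isolated (closed) vertices coming from $x_{e_1}$ and $x_{e_{p-1}}$ together with the string on $u_3,\dots,u_{p-2}$ of length $p-4$ whose two end vertices have become closed (this string being empty when $p\le 4$). Applying this with $p=i_2-1=n_2+1$ to the left block gives two isolated vertices and a string of length $n_2-3$, which is empty exactly when $n_2=2$; applying it with $p=\mu-i_2$ to the right block, using $\mu=k+\sum_{i=2}^{k}n_i$ so that $\mu-i_2=k-2+\sum_{i=3}^{k}n_i$, gives two isolated vertices and a string of length $\mu-i_2-4=k-3+(n_k-1)+(n_3-2)+\sum_{i=4}^{k-1}n_i$, which for $k=3$ reads $n_3-3$ (empty when $n_3=2$). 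Taking the disjoint union of the two blocks yields the claimed $4$ isolated vertices and $2$ strings, so $I_1\cap I_2=m_{v_2}I'$ with $\mathcal H(I')$ as described.

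The conceptual content here is light; the main obstacle is the bookkeeping. One must correctly identify which generators of $I_2:m_{v_2}$ become non-minimal — it is exactly this all-open-string phenomenon at the two ends of each block that produces the isolated vertices, and it is easy to be off by one in deciding which vertices survive — and then match the length $\mu-i_2-4$ with the split form $k-3+(n_k-1)+(n_3-2)+\sum_{i=4}^{k-1}n_i$ and dispose of the degenerate small cases $n_2=2$, $k=3$ and $n_3=2$ separately.
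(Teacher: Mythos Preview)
Your proposal is correct and follows essentially the same route as the paper. The paper works directly with the lcms $\operatorname{lcm}(m_{v_2},m_w)$ to describe a non-separated hypergraph $m_{v_2}\mathcal H_{v_2}$ and then deletes the four redundant vertices $w_\alpha,w_{\beta_1'},w_{\beta_2'},w_\gamma$; your use of the identity $(f)\cap J=f(J:f)$ and the explicit ``all-open string'' minimalization is a cleaner and more systematic packaging of the same computation, with the same four isolated vertices $v_1,w_{i_2-1},w_{i_2+1},v_k$ and the same two residual strings.
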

\begin{proof}
To see this
consider the hypergraph $\mathcal{H}$, for notational convenience,
let us denote the vertex neighboring $v_{1}$ as $w_{\alpha}$, the
vertices neighboring $v_{2}$ as $w_{\beta_{1}}$ and $w_{\beta_{2}}$,
and the vertex neighboring $v_{k}$ as $w_{\gamma}$. Now removing
$v_{2}$ from $\mathcal{H}$ leaves us with a hypergraph on the same
vertex set excluding $v_{2}$ and all vertices remain open except
$w_{\beta_{1}}$ and $w_{\beta_{2}}$ which become closed, together
with a $k-1$ edge $F'$ that has $\{v_{1},v_{3},\dots,v_{k}\}$ as
its vertex set (note this also describes the hypergraph $\mathcal{H}_{v_{2}}$).

Now we consider the intersection with the ideal generated by $G_{1}$.
A first step towards finding these new generators is to multiply each
generator for $G_{2}$ by the monomial $m_{v_2}$ corresponding to $v_{2}$
in the original $\mathcal{H}$. The result on hypergraphs is now we
get a hypergraph, which we will denote as $m_{v_2}\mathcal{H}_{v_{2}}$, consisting
of 2 strings of only open vertices where one string is the part of
$\mathcal{H}$ consisting of $v_{1}$ to $w_{\beta_{1}}$ and the
other is $w_{\beta_{2}}$ to $v_{k}$, and all the open vertices are
in an edge corresponding to $m_{v_2}$. Note that the spacing measurements
for $m_{v_2}\mathcal{H}_{v_{2}}$ are the same as for $\mathcal{H}_{v_{2}}$.
The issue here is that $m_{v_2}\mathcal{H}_{v_{2}}$ is not separated (i.e.
the generators are not minimal). Denote the vertices neighboring each
$w_{\beta_{i}}$ as $w_{\beta_{i}'}$. It is easy to see that removing
vertices $w_{\alpha}$, $w_{\beta_{1}'}$, $w_{\beta_{2}'}$, and
$w_{\gamma}$ from $m_{v_2}\mathcal{H}_{v_{2}}$ produces the desired separated
hypergraph corresponding to $I_{1}\cap I_{2}$. 

Notice that when
$n_{2}=2$ then $w_{\alpha}=w_{\beta_{1}'}$ and similarly if $k=3$
and $n_{3}=2$ then $w_{\beta_{2}'}=w_{\gamma}$. It is easy to see
then that we get 4 isolated vertices corresponding to the original
$v_{1},w_{\beta_{1}},w_{\beta_{2}}$ and $v_{k}$. And the remaining
chains of open strings reflect removing $v_{1}$, $w_{\alpha}$, $w_{\beta_{1}'}$
and $w_{\beta_{1}}$ from the chain of length $n_{2}+1$ so the result
is a chain of length $n_{2}-3$ when $n_{2}>2$, or $0$ when $n_{2}=2$.
Similarly, after removing $v_{k}$, $w_{\beta_{2}}$, $w_{\beta_{2}'}$
and $w_{\gamma}$ from a chain of length $k-2+n_{3}+\sum_{i=4}^{k}n_{i}$
results a chain of length $k-3+n_{3}-2+n_{k}-1+\sum_{i=4}^{k-1}n_{i}$
when $k>3$, or $n_{3}-3$ when $k=3$ and $n_{3}>2$, or $0$ when
$k=3$ and $n_{3}=2$. 
\end{proof}

\begin{lemma}
\label{SubStinky}We adapt  \Cref{stringwithEdge}. Assume $w_{1}=v_{1}$ and $w_{\mu}=v_{k}$, i.e. $n_1=n_{k+1}=0$, and both $w_{1}=v_{1}$ and $w_{\mu}=v_{k}$ are open vertices. If $n_{i}=2+3l_{i}$ for $i=2,...,k$. Then the projective dimension
of $\mathcal{H}$ is $2(k-1)+2\sum_{i=2}^{k}l_{i}+1$.
\end{lemma}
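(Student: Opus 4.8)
The plan is to use the splitting-type result of \Cref{Split} with the decomposition $G_1 = \{v_2\}$ and $G_2 = \mathcal{H}_{v_2}$, whose intersection ideal was computed in \Cref{IntersectionSplit}. Concretely, set $I = I(\mathcal{H})$, $J = I_1 = (m_{v_2})$, and $K = I_2 = I(\mathcal{H}_{v_2})$; then $I = J + K$ and $\mathcal{G}(I)$ is the disjoint union of $\mathcal{G}(J)$ and $\mathcal{G}(K)$. To apply \Cref{Split} we must verify: $\pd(R/J) < q$, $\pd(R/K) = q$, $\reg(R/K) < r$, $\pd(R/(J\cap K)) = q$, and $\beta_{q,q+r}(R/(J\cap K)) \neq 0$, for the right choice of $q$ and $r$; the conclusion $\pd(R/I) = q+1$ then gives exactly $\pd(\mathcal{H}) = 2(k-1) + 2\sum_{i=2}^{k}l_i + 1$, so we take $q = 2(k-1) + 2\sum_{i=2}^{k}l_i$.

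First I would pin down the invariants of the pieces. Since $J = (m_{v_2})$ is principal, $\pd(R/J) = 1$, which is $< q$ once $k > 2$ (the case $k = 2$ must be checked separately, but there $\mathcal{H}_{v_2}$ is a short string and the statement reduces to \Cref{LaundryList1}~\ref{pdFormulaString}). For $K$, by \Cref{2Stringedge} we have $\pd(R/K) = \pd(\mathcal{H}_{v_2}) = 2(k-1) + 2\sum_{i=2}^{k}l_i = q$ and $\reg(\mathcal{H}_{v_2}) \le k-1 + \sum_{i=2}^{k}l_i$; so I would set $r = k + \sum_{i=2}^{k}l_i$, giving $\reg(R/K) < r$ with room to spare. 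It remains to handle $J \cap K = m_{v_2} I'$ where $\mathcal{H}(I')$ is the explicit hypergraph described in \Cref{IntersectionSplit}: four isolated vertices plus two open strings. Using \Cref{LaundryList1}~\ref{RegFormula} together with the additivity of $\pd$ and $\reg$ over disjoint unions (\Cref{LaundryList1}~\ref{pdFormulaString}, and the disjoint-union formula from the Remark after \Cref{LaundryList1}), I would compute $\pd(R/(J\cap K))$ and $\reg(R/(J\cap K))$ as the degree of $m_{v_2}$ (namely $2$) plus the contributions of the four isolated vertices (contributing $4$ to $\pd$ and $4$ to $\reg$) plus the two string contributions $n_i - \lfloor n_i/3\rfloor$ and $\lceil n_i/3\rceil$ for the two string lengths listed there. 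The arithmetic here is the same style of floor-function bookkeeping used in \Cref{lem:<2k} and \Cref{lem:>2k+1}: one checks that these sums equal $q$ and $q + r$ respectively. The nonvanishing $\beta_{q,q+r}(R/(J\cap K)) \neq 0$ then follows because each string factor and each isolated vertex contributes a nonzero top Betti number in top degree (the $\beta_{\mu - \lfloor \mu/3\rfloor, \mu} \neq 0$ clause of \Cref{LaundryList1}~\ref{pdFormulaString}), and tensoring resolutions of disjoint pieces multiplies these nonzero top corner Betti numbers together.

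The main obstacle I anticipate is the bookkeeping in the $J \cap K$ computation: one has to carefully track the several case distinctions from \Cref{IntersectionSplit} (whether $n_2 = 2$ or $n_2 > 2$, and whether $k = 3$ with $n_3 = 2$ or not), and in each case verify that the degree-shift $2$, the four isolated vertices, and the two string lengths combine to give precisely $\pd = q$ and $\reg = q + r$ — not merely $\le$. A useful sanity check is that the total vertex count of $\mathcal{H}$ is $\mu = (k-1) + \sum_{i=2}^{k} n_i$ (since $n_1 = n_{k+1} = 0$), and $\pd(\mathcal{H}) = q + 1$ should agree with $\mu$ minus the number of "$l_i$ savings" and consistently with \Cref{LaundryList1}~\ref{LMa2-2.9c} applied to the sub-hypergraph $\mathcal{H} : F \subseteq \mathcal{H}$. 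Once the three pieces' invariants are verified, \Cref{Split} applies verbatim and the proof closes. I would also note at the end that this lemma is the "second outcome" alluded to in the $\sum r_i = 2k$ discussion, complementing the branch-reduction arguments of \Cref{LMa247}.
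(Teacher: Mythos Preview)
Your overall strategy matches the paper's proof almost exactly: split $\mathcal{H}$ as $\{v_2\} \cup \mathcal{H}_{v_2}$, invoke \Cref{2Stringedge} for the invariants of $G_2=\mathcal{H}_{v_2}$, invoke \Cref{IntersectionSplit} to identify $J\cap K = m_{v_2}I'$, compute $\pd$ and $\reg$ of $I_1\cap I_2$ via \Cref{LaundryList1}~\ref{RegFormula} and the string formulas, check the nonvanishing Betti number via \Cref{LaundryList1}~\ref{BettiString2}, and conclude with \Cref{Split}. The paper additionally opens with an upper bound $\pd(\mathcal{H})\le 2(k-1)+2\sum l_i+1$ obtained by closing $v_1,\dots,v_k$ and applying Theorem~3.4 of \cite{LMa1}; this is logically redundant once \Cref{Split} gives equality, so your omission of it is harmless.

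There are, however, three concrete slips in your bookkeeping that would derail the arithmetic. First, the degree of $m_{v_2}$ is $3$, not $2$: $v_2$ lies on the two string edges $\{w_{i_2-1},w_{i_2}\}$, $\{w_{i_2},w_{i_2+1}\}$ \emph{and} on $F$. Second, the four isolated vertices of $\mathcal{H}(I')$ each correspond to a single variable (degree~$1$), so they contribute $4$ to $\pd$ but $0$ to $\reg$; your ``$4$ to $\reg$'' would make the numbers fail to line up with $r=k+\sum l_i$. Third, the $k=2$ case does not reduce to \Cref{LaundryList1}~\ref{pdFormulaString}: when $k=2$ the edge $F=\{w_1,w_\mu\}$ is one-dimensional, so $\mathcal{H}$ is an open cycle of length $n_2+2$ and the result follows directly from \Cref{LaundryList1}~\ref{pdFormulaCycle} (note also that \Cref{IntersectionSplit} is stated only for $k>2$). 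With these three corrections in place, your outline becomes the paper's proof.
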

 
\begin{proof}
If $k=2$, then $\mathcal{H}$ is an open cycle of length $2+n_2$, then by \Cref{LaundryList1} \ref{pdFormulaCycle}, $\pd(\mathcal{H})=2+n_2-1-\left\lfloor \frac{2+n_{2}-2}{3}\right\rfloor=2+2l_2+1 $. We now assume $k>2$. By making vertices $v_{1},...,v_{k}$ become closed, we obtain a hypergraph
$\mathcal{H}'$ and $\pd(\mathcal{H})\leq\pd(\mathcal{H}')$ by \Cref{LaundryList1} \ref{LMa2-2.9c}. Let
$\mathcal{H}''$ be the hypergraph obtained from $\mathcal{H}'$ by removing the higher
dimensional edge.  Then by \Cref{LaundryList1} \ref{removeMeets}, we have that $\pd(\mathcal{H}')=\pd(\mathcal{H}'')$ because all the vertices
$v_{1},...,v_{k}$ are closed in $\mathcal{H}'$.  Note that $\mathcal{H}''$
is a string of length $k+\sum_{i=2}^{k}n_{i}$ with $k-1$ open
strings of $n_{2},...,n_{k}$ open vertices. By Theorem 3.4 in
\cite{LMa1}, the projective dimension is the sum of projective dimension of each open string plus 1, hence we get $$\pd(\mathcal{H}'')=\sum_{i=2}^{k}n_{i}-\sum_{i=2}^{k}\left\lfloor \frac{n_{i}}{3}\right\rfloor +1=2(k-1)+2\sum_{i=2}^{k}l_{i}+1.$$
Thus, we obtain $\pd (\mathcal{H})\leq2(k-1)+2\sum_{i=2}^{k}l_{i}+1$. 

Now we consider $\mathcal{H}=\{v_{2}\}\cup \mathcal{H}_{v_{2}}$ and we will show it satisfies
the condition of Remark \ref{SplitH} with $V_{1}=\{v_{2}\}$ and
$V_{2}$ as the vertex set of $\mathcal{H}_{v_{2}}$. Denote $I_i$ as
the ideal generated by the generators of $I(\mathcal{H})$
corresponding to $V_i$, and $G_1=\{v_{2}\}=\mathcal{H}(I_1)$ and  $G_2=\mathcal{H}_{v_{2}}=\mathcal{H}(I_2)$. First notice that $\pd (G_{1})=1$ and $\reg (G_{1})=2$,
since the degree of the generator corresponding to $v_{2}$ is $3$. Moreover $G_2$ satisfies the
condition of \Cref{2Stringedge}, hence $\pd(G_{2})=2(k-1)+2\sum_{i=2}^{k}l_{i}=q$
and $\reg(G_{2})\leq k-1+\sum_{i=2}^{k}l_{i}=r-1$. By Lemma
\ref{IntersectionSplit}, 
$I_{1}\cap I_{2}=m_{v_2}I'$ where $\mathcal{H}(I')$ has $4$ isolated vertices
and two strings of length $n_{2}-3$ and
$k-3+n_{k}-1+n_{3}-2+\sum_{i=4}^{k-1}n_{i}$.
Then by $n_{i}=2+3l_{i}$ for $i=2,...,k$, \Cref{LaundryList1} \ref{pdFormulaString}, and \ref{RegFormula} we get
\begin{align*}
\begin{split}
\pd(\mathcal{H}(I_{1}\cap I_{2})) {}& =4+n_{2}-3-\left\lfloor
                             \frac{n_{2}-3}{3}\right\rfloor
                             + k- 6+\sum_{i=3}^{k}n_{i} 
                             -\left\lfloor
                               \frac{k-6+\sum_{i=3}^{k}n_{i}}{3}\right\rfloor\\
                             {}&=3+3l_{2}-\left\lfloor
                             \frac{3l_{2}-1}{3}\right\rfloor
                             + k- 6+\sum_{i=3}^{k}(3l_{i}+2) 
                             -\left\lfloor
                               \frac{k-6+\sum_{i=3}^{k}(3l_{i}+2)}{3}\right\rfloor\\
                               {}&=2l_{2}-2
                             + k+2(k-2)+\sum_{i=3}^{k}(3l_{i})
                             -\left\lfloor
                               \frac{k-6+2(k-2)+\sum_{i=3}^{k}(3l_{i})}{3}\right\rfloor\\
                               {}&=2l_{2}-2
                             + k+2(k-2)+\sum_{i=3}^{k}(3l_{i}) 
                             -k+4-\sum_{i=3}^{k}(l_{i})
                             \end{split}\\
 {}& =2(k-1)+2\sum_{i=2}^{k}l_{i}=q
\end{align*}
and 
\begin{align*}
\reg(\mathcal{H}(I_{1}\cap I_{2})) &= 3+\left\lceil
                             \frac{n_{2}-3}{3}\right\rceil
                             +\left\lceil
                             \frac{k-6+\sum_{i=3}^{k}n_{i}}{3}\right\rceil
  \\
  &= 3+\left\lceil
                             \frac{3l_{2}-1}{3}\right\rceil
                             +\left\lceil
                             \frac{k-6+2(k-2)+\sum_{i=3}^{k}(3l_{i})}{3}\right\rceil
  \\
&=3+l_{2}+k-3+\sum_{i=3}^{k}l_{i}=k+\sum_{i=2}^{k}l_{i}=r.
\end{align*}

Moreover, by \Cref{LaundryList1} \ref{pdFormulaString} and \ref{BettiString2}, we have
$\beta_{q,q+r} (\mathcal{H}(I_{1}\cap I_{2}))\neq 0$.  Hence by Lemma \ref{Split}, $\pd(\mathcal{H})=2(k-1)+2\sum_{i=2}^{k}l_{i}+1$. For the smaller cases, one can check similarly. 
\end{proof}

The following example offers a view of the "splitting" in the proof of \Cref{SubStinky}.

\begin{example} \label{ex:substinky}
Let $\mathcal{H}$ be the whole hypergraph in the left side of \Cref{HGStinky}. Let $V_1$ be
the $w_4=v_2$  (black part) of $\mathcal{H}$ and $V_2=\{w_1, w_2, w_3, w_5,
w_6, w_7,w_8,w_9,w_{10}\}$ (blue part) of $\mathcal{H}$. Let $G_1$ and $G_2$ be the hypergraphs associated to the vertex sets $V_1$ and $V_2$. Then the edges $\{w_3, w_4\}$, $\{w_4, w_5\}$, and $\{w_1, w_4, w_7,w_{10} \}$  (the purple part) are the shared
edges or edges of $G_1$ and $G_2$. In the right of \Cref{HGStinky}, we show
the hypergraphs for
$G_1$ and $G_2$ separately.

\begin{figure}[h] 
\caption{}\label{HGStinky}
\begin{center}
\begin{tikzpicture}[thick, scale=0.8]
% \shade [shading=ball, ball color=black]  (0,0) circle (.1) node[left]{$w_1$};
\draw  [shape=circle, color=blue](0,-0.5) circle (.1) node[left]{$w_1 = v_1$} ;
\draw  [shape=circle, color=blue] (1,-1) circle (.1)  node[below]{$w_2$};
\draw  [shape=circle, color=blue] (2,-1.5) circle (.1) node[below]{$w_3$} ;
\draw  [shape=circle] (3,-1) circle (.1) node[right]{$w_4 = v_2$} ;
\draw  [shape=circle, color=blue] (4,-0.5) circle (.1) node[right]{$w_5$} ;
\draw  [shape=circle, color=blue] (5,0) circle (.1) node[right]{$w_6$} ;
\draw  [shape=circle, color=blue] (4,0.5) circle (.1) node[right]{$w_7= v_3$} ;
\draw  [shape=circle, color=blue] (3,1) circle (.1) node[above]{$w_8$} ;
\draw  [shape=circle, color=blue] (2,1.5) circle (.1) node[above]{$w_{9}$} ;
\draw [shape=circle, color=blue]  (1,1) circle (.1) node[left]{$w_{10} = v_4$};

% \draw [line width=1.2pt] (0,0)--(1,-0.5);
\draw [line width=1.2pt, color=blue ] (0,-0.5)--(1,-1);
\draw [line width=1.2pt, color=blue ] (1,-1)--(2,-1.5);
\draw [line width=1.2pt, color=purple ] (2,-1.5)--(3,-1);
\draw [line width=1.2pt, color=purple ] (3,-1)--(4,-0.5);
\draw [line width=1.2pt, color=blue ] (4,-0.5)--(5,0);
\draw [line width=1.2pt, color=blue ] (5,0)--(4,0.5);
\draw [line width=1.2pt, color=blue ] (4,0.5)--(3,1);
\draw [line width=1.2pt, color=blue ] (3,1)--(2,1.5);
\draw [line width=1.2pt, color=blue ] (2,1.5)--(1,1);
\path [pattern=north west lines, pattern  color=purple]   (0,-0.5)--(3,-1)--(4,0.5)--(1,1)--cycle;
\path (2,-2)--(3,-2) node [pos=.5, below ] {$\mathcal{H}$ };

\draw  [shape=circle, color=blue](8,-0.5) circle (.1) node[left]{$w_1 = v_1$} ;
\draw  [shape=circle, color=blue] (9,-1) circle (.1)  node[below]{$w_2$};
\draw [shading=ball, ball color=blue] (10,-1.5) circle (.1) node[below]{$w_3$} ;
\draw [shading=ball, ball color=black] (11,-1) circle (.1) node[right]{$G_1=\{v_2\}$} ;
\draw  [shading=ball, ball color=blue](12,-0.5) circle (.1) node[right]{$w_5$} ;
\draw  [shape=circle, color=blue] (13,0) circle (.1) node[right]{$w_6$} ;
\draw  [shape=circle, color=blue] (12,0.5) circle (.1) node[right]{$w_7= v_3$} ;
\draw  [shape=circle, color=blue] (11,1) circle (.1) node[above]{$w_8$} ;
\draw  [shape=circle, color=blue] (10,1.5) circle (.1) node[above]{$w_{9}$} ;
\draw [shape=circle, color=blue]  (9,1) circle (.1)
node[left]{$w_{10} = v_4$};
\path (7,0.5)--(8,0.5) node [pos=.5, below ] {\textcolor{blue}{$G_2=\mathcal{H}_{v_2}$} };

% \draw [line width=1.2pt] (0,0)--(1,-0.5);
\draw [line width=1.2pt, color=blue ] (8,-0.5)--(9,-1);
\draw [line width=1.2pt, color=blue ] (9,-1)--(10,-1.5);
% \draw [line width=1.2pt, color=purple ] (9,-1.5)--(10,-1);
% \draw [line width=1.2pt, color=purple ] (10,-1)--(11,-0.5);
\draw [line width=1.2pt, color=blue ] (12,-0.5)--(13,0);
\draw [line width=1.2pt, color=blue ] (13,0)--(12,0.5);
\draw [line width=1.2pt, color=blue ] (12,0.5)--(11,1);
\draw [line width=1.2pt, color=blue ] (11,1)--(10,1.5);
\draw [line width=1.2pt, color=blue ] (10,1.5)--(9,1);
\path [pattern=north west lines, pattern  color=purple]   (8,-0.5)--(12,0.5)--(9,1)--cycle;

\end{tikzpicture}
\end{center}
\end{figure}

\end{example}

Now with \Cref{SubStinky} we are ready to
address the case when the sum of the $n_i$ is $2k$ modulo 3.  In this
case \Cref{StinkyString2k} will be an instance of the special sub-case,
and \Cref{String2k} will give the general result.

\begin{lemma}
\label{StinkyString2k}We adapt  \Cref{stringwithEdge}. If $r_{1}=1=r_{k+1}$,
and $r_{i}=2$ for all $1<i<k+1$, then $\pd(\mathcal{H})=\pd(\mathcal{H}_{S_{\mu}})+1$.
\end{lemma}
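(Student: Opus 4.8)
The plan is to compute $\pd(\mathcal{H})$ exactly by peeling off, one at a time, the two end-segments of the string that lie outside the edge $F$ — each of which has length $\equiv 1 \pmod{3}$ by hypothesis — and then reading off the value of what remains from \Cref{SubStinky}. To fix the arithmetic target: the hypotheses give $\sum_{i=1}^{k+1} r_i = r_1 + r_{k+1} + \sum_{i=2}^{k} r_i = 1 + 1 + 2(k-1) = 2k$, so $\mu = k + \sum_{i=1}^{k+1} n_i = 3k + 3\sum_{i=1}^{k+1} l_i$, and by \Cref{LaundryList1} \ref{pdFormulaString} we get $\pd(\mathcal{H}_{S_\mu}) = \mu - \left\lfloor \frac{\mu}{3}\right\rfloor = 2k + 2\sum_{i=1}^{k+1} l_i$; the goal is thus $\pd(\mathcal{H}) = 2k + 2\sum_{i=1}^{k+1} l_i + 1$. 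Note that all the $n_i$ are positive (the inner ones are $3l_i+2\ge 2$, the two outer ones $3l_i+1\ge 1$), so $v_1$ and $v_k$ are interior vertices of the string, each of degree $3$.

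I would argue as follows. The segment $w_1,\dots,w_{n_1}$ is a branch departing from $v_1$ whose only closed vertex is its leaf $w_1$, and $n_1\equiv 1\pmod{3}$; so by \Cref{LMa247}(a), $\pd(\mathcal{H})=\pd(\mathcal{H}:E_1)$, where $E_1$ is the edge joining $v_1$ to $w_{n_1}$. Deleting $E_1$ separates this branch off, contributing $n_1-\left\lfloor\frac{n_1}{3}\right\rfloor = 2l_1+1$ to the projective dimension (by \Cref{LaundryList1} \ref{pdFormulaString}), and leaving the hypergraph $\mathcal{H}^{(1)}$ consisting of the string $w_{i_1},\dots,w_\mu$ with $F$ still attached and $v_1=w_{i_1}$ now an open endpoint; hence $\pd(\mathcal{H})=(2l_1+1)+\pd(\mathcal{H}^{(1)})$. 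Inside $\mathcal{H}^{(1)}$ the vertex $v_k$ still has degree $3$, and $w_{i_k+1},\dots,w_\mu$ is a branch from it of length $n_{k+1}\equiv 1\pmod{3}$ with leaf $w_\mu$ closed, so a second application of \Cref{LMa247}(a) gives $\pd(\mathcal{H}^{(1)})=(2l_{k+1}+1)+\pd(\mathcal{H}^{(2)})$, where $\mathcal{H}^{(2)}$ is the string $v_1,\dots,v_k$ with interior spacings $n_2,\dots,n_k$ (each $\equiv 2\pmod{3}$), together with $F$, and with both $v_1$ and $v_k$ open endpoints. But $\mathcal{H}^{(2)}$ is exactly a hypergraph of the type treated in \Cref{SubStinky}, so $\pd(\mathcal{H}^{(2)})=2(k-1)+2\sum_{i=2}^{k} l_i+1$. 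Combining the three contributions,
\[
\pd(\mathcal{H}) = (2l_1+1)+(2l_{k+1}+1)+2(k-1)+2\sum_{i=2}^{k} l_i+1,
\]
which simplifies to $2k + 2\sum_{i=1}^{k+1} l_i + 1 = \pd(\mathcal{H}_{S_\mu})+1$, as required.

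The step I expect to demand the most care is invoking \Cref{LMa247}(a) when $k\ge 3$: there $F$ is a genuinely higher-dimensional edge, whereas \Cref{LMa247} is stated for $1$-dimensional hypergraphs. One must check — exactly as in the example following \Cref{LMa247}, where the result is applied with the branch point lying on a $2$-dimensional edge — that the short exact sequences behind case (a) involve only the branch being peeled together with the string edge $E_1$ (respectively $E_{k+1}$), so $F$, which does not meet that edge, is inert; once $\mathcal{H}^{(2)}$ is reached it goes straight into \Cref{SubStinky}. If one prefers to sidestep this, the bound $\pd(\mathcal{H})\le \pd(\mathcal{H}_{S_\mu})+1$ follows from the sequence $0\to \mathcal{H}:F \to \mathcal{H} \to (\mathcal{H},x_F)\to 0$, since $\mathcal{H}:F=\mathcal{H}_{S_\mu}$ and $(\mathcal{H},x_F)=\bigsqcup_{i=1}^{k+1} S_{n_i}\sqcup\{x_F\}$ has projective dimension $1+\sum_{i}\bigl(n_i-\left\lfloor\frac{n_i}{3}\right\rfloor\bigr)=\pd(\mathcal{H}_{S_\mu})+1$; and the reverse inequality can be produced by the Betti-splitting argument of \Cref{SubStinky} — splitting $I$ as $I_1+I_2$ with $I_1=(m_{v_2})$ and $I_2=I(\mathcal{H}_{v_2})$, computing $\pd$ and $\reg$ of $\mathcal{H}_{v_2}$ and of $\mathcal{H}(I_1\cap I_2)$ (which reduce, via \Cref{LMa247} and the earlier lemmas of this section, to formulas in \Cref{LaundryList1}), and applying \Cref{Split}.
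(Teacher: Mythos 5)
Your argument has the same skeleton as the paper's: both proofs pass from $\mathcal{H}$ to $\mathcal{H}:E_x$ and then to $(\mathcal{H}:E_x):E_y$ (your $E_1$, $E_{k+1}$), identify the surviving connected piece as the hypergraph of \Cref{SubStinky}, and do the same arithmetic, so your final count $2k+2\sum_{i=1}^{k+1}l_i+1$ is exactly right. The difference is entirely in how the two equalities $\pd(\mathcal{H})=\pd(\mathcal{H}:E_x)=\pd((\mathcal{H}:E_x):E_y)$ are justified, and that is where your proposal has a genuine gap. You invoke \Cref{LMa247}(a), but that theorem is stated only for $1$-dimensional hypergraphs, and here the branch vertices $v_1$ and $v_k$ lie on the higher-dimensional edge $F$; you flag this yourself, but ``one must check that $F$ is inert'' is an assertion about the unseen proof in \cite{LMa2}, not a verification. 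It is telling that the paper does not cite \Cref{LMa247} at this point: it instead supplies exactly the missing content by running the two short exact sequences $0\to(\mathcal{H}:E_x)\to\mathcal{H}\to(\mathcal{H},x_E)\to 0$ and $0\to((\mathcal{H}:E_x):E_y)\to(\mathcal{H}:E_x)\to((\mathcal{H}:E_x),y_E)\to 0$ and proving the strict inequalities $\pd(\mathcal{H},x_E)<\pd((\mathcal{H}:E_x):E_y)$ and $\pd((\mathcal{H}:E_x),y_E)<\pd((\mathcal{H}:E_x):E_y)$; the first of these requires \Cref{String2k+1} (applied to the $k-2$-dimensional edge left after killing $x_E$) and the second requires \Cref{Stringedge}. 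Neither computation is optional, and neither is subsumed by your branch-peeling heuristic.

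Your proposed fallback only half-repairs this. The short exact sequence in $x_F$ does give the upper bound $\pd(\mathcal{H})\le\pd(\mathcal{H}_{S_\mu})+1$ cleanly (your computation of $\pd(\mathcal{H},x_F)$ is correct). But for the lower bound you gesture at ``the Betti-splitting argument of \Cref{SubStinky}'' applied to $\mathcal{H}$ itself with $I_1=(m_{v_2})$. That would not be a citation but a re-derivation: the hypergraphs $\mathcal{H}_{v_2}$ and $\mathcal{H}(I_1\cap I_2)$ now carry the extra tails of lengths $n_1$ and $n_{k+1}$ hanging off $v_1$ and $v_k$, so the inputs to \Cref{Split} — the analogues of \Cref{2Stringedge} and \Cref{IntersectionSplit}, plus the regularity bounds and the nonvanishing of the top Betti number of the intersection — would all have to be recomputed for these new hypergraphs. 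As written, the proposal establishes the upper bound and the target value, but the lower bound rests either on an out-of-scope citation or on an unexecuted computation.
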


\begin{proof}
First notice that $\pd(\mathcal{H}_{S_{\mu}})=k+\sum_{i=1}^{k+1}n_{i}-\left\lfloor \frac{k+\sum_{i=1}^{k+1}n_{i}}{3}\right\rfloor $ and $\sum_{i=1}^{k+1}r_{i}=2k$. We simplify $\pd(\mathcal{H}_{S_{\mu}})$ first.
\begin{align*}
\begin{split}
\pd(\mathcal{H}_{S_{\mu}})&=k+\sum_{i=1}^{k+1}n_{i}-\left\lfloor \frac{k+\sum_{i=1}^{k+1}n_{i}}{3}\right\rfloor\\
                             &=k+2k+\sum_{i=1}^{k+1}(3l_{i})-\left\lfloor \frac{k+2k+\sum_{i=1}^{k+1}(3l_{i})}{3}\right\rfloor
                            \end{split}\\
{} & =2k+\sum_{i=1}^{k+1}2l_{i}.
\end{align*}
Let $x_E$ be the variable corresponding to the edge $E_x$ that connecting
$v_{1}=w_{i_1}$ and the vertex of $w_{i_1-1}$ and $y_E$ be the variable corresponding
to the edge $E_y$ that connecting $v_{k}=w_{i_k}$ and the vertex of $w_{i_k+1}$.
We consider the short exact sequences: 
\[
0\rightarrow (\mathcal{H}:E_x) \rightarrow \mathcal{H}\rightarrow(\mathcal{H},x_E) \rightarrow0,
\]
and
\[
0\rightarrow ((\mathcal{H}:E_x):E_y) \rightarrow(\mathcal{H}:E_x)\rightarrow((\mathcal{H}:E_x),y_E)\rightarrow0.
\]

Notice that $((\mathcal{H}:E_x):E_y)$ is a union of two isolated vertices, two
strings of length $n_{1}-2$ and $n_{k+1}-2$, and a hypergraph that satisfies
assumptions of Lemma \ref{SubStinky}. Now with assumptions of $r_i$'s, we have 
\begin{align*}
\begin{split}
\pd((\mathcal{H}:E_x):E_y) &=2+n_{1}-2 -\left\lfloor
                                  \frac{n_{1}-2}{3}\right\rfloor
                                  +n_{k+1}-2-\left\lfloor
                                  \frac{n_{k+1}-2}{3}\right\rfloor \\
                            & +2(k-1)+2\sum_{i=2}^{k}l_{i}+1 \end{split}\\
                             & =2+2l_1+2_{l_{k+1}}+2(k-1)+2\sum_{i=2}^{k}l_{i}+1\\
 & =2k+\sum_{i=1}^{k+1}2l_{i}+1 =\pd(\mathcal{H}_{S_{\mu}})+1.
\end{align*}

Since $(\mathcal{H},x_E)$ is a union of an isolated vertex, a string of length
$n_{1}-1$, and a string of length $k-1+\sum_{i=2}^{k+1}n_{i}$ with
a $k-2$-dimensional edge such that $n_{2},...,n_{k+1}$ are the numbers
of vertices between vertices of $F$ and $\sum_{i=2}^{k+1}r_{i}=2k-1$.
By \Cref{String2k+1} and \Cref{LaundryList1} \ref{pdFormulaString}, 
\begin{align*}
\pd(\mathcal{H},x_E) & =1+n_{1}-1-\left\lfloor \frac{n_{1}-1}{3}\right\rfloor +k-1+\sum_{i=2}^{k+1}n_{i}-\left\lfloor \frac{k-1+\sum_{i=2}^{k+1}n_{i}}{3}\right\rfloor \\
& =2l_{1} +k+2k-1+\sum_{i=2}^{k+1}(3l_{i})-\left\lfloor \frac{k-1+2k-1+\sum_{i=2}^{k+1}(3l_{i})}{3}\right\rfloor \\
 & <2k+\sum_{i=1}^{k+1}2l_{i}+1 .
\end{align*}

Moreover, $((\mathcal{H}:E_x),y_E)$ is a union of two isolated vertices, two strings
of length $n_{1}-2$ and $n_{k+1}-1$ and a hypergraph satisfies the
assumptions of Lemma \ref{Stringedge} with a $k-2$-dimensional edge. Then by
Lemma \ref{Stringedge} and \Cref{LaundryList1} \ref{pdFormulaString}, 
\begin{align*}
\pd((\mathcal{H}:E_x),y_E) & =2+n_{1}-2-\left\lfloor \frac{n_{1}-2}{3}\right\rfloor +n_{k+1}-1-\left\lfloor \frac{n_{k+1}-1}{3}\right\rfloor \\
 & +k-1+\sum_{i=2}^{k}n_{i}-\left\lfloor \frac{k-1+\sum_{i=2}^{k}n_{i}}{3}\right\rfloor \\
 & =1+2l_1 +2l_{k+1}+k+2(k-1)+\sum_{i=2}^{k}(3l_{i})-\left\lfloor \frac{k-1+2k-2+\sum_{i=2}^{k}(3l_{i})}{3}\right\rfloor\\
 & <2k+\sum_{i=1}^{k+1}2l_{i}+1  =\pd((\mathcal{H}:E_x):E_y).
\end{align*}

Since $\pd((\mathcal{H}:E_x):E_y)>\pd((\mathcal{H}:E_x),y_E)$, we have 
\begin{align*}
\pd(\mathcal{H}:E_x) & \leq\max\{\pd((\mathcal{H}:E_x):E_y),\pd((\mathcal{H}:E_x),y_E)\}=\pd((\mathcal{H}:E_x):E_y)
\end{align*}
and $\pd((\mathcal{H}:E_x):E_y)\leq\max\{\pd(\mathcal{H}:E_x),\pd((\mathcal{H}:E_x),y_E)-1\}$.
This shows $\pd(\mathcal{H}:E_x)=\pd((\mathcal{H}:E_x):E_y)$. Similarly $\pd(\mathcal{H},x_E)<\pd(\mathcal{H}:E_x)$
gives $\pd(\mathcal{H})=\pd(\mathcal{H}:E_x)=\pd\mathcal{H}_{S_{\mu}}+1$.
\end{proof}

We are now finally ready for the case,  $\sum_{i=1}^{k+1}r_{i}=2k$.

\begin{proposition}
\label{String2k}We adapt  \Cref{stringwithEdge} and assume $\sum_{i=1}^{k+1}r_{i}=2k$
with $r_{1}=2$. If $r_{i}\neq0$ for all $i$, then $\pd(\mathcal{H})=(\pd\mathcal{H}_{S_{\mu}})+1$,
otherwise $\pd(\mathcal{H})=\pd(\mathcal{H}_{S_{\mu}})$.

\end{proposition}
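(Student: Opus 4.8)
The hypotheses already force a dichotomy. Writing $a,b,c$ for the number of indices $i\in\{1,\dots,k+1\}$ with $r_i$ equal to $0,1,2$ respectively, the relations $a+b+c=k+1$ and $b+2c=\sum_{i=1}^{k+1}r_i=2k$ give $c=k-b/2$ and $a=1-b/2$; since $a\ge 0$ this leaves only $b=2,\ a=0$ (exactly two residues equal $1$ and the rest equal $2$) or $b=0,\ a=1$ (exactly one residue equals $0$ and the rest equal $2$), and the hypothesis that no $r_i=0$ picks out the first. Call these cases (a) and (b). A floor computation as in the proof of \Cref{StinkyString2k} gives $\pd(\mathcal{H}_{S_\mu})=k+\sum n_i-\lfloor(k+\sum n_i)/3\rfloor=2k+2\sum_{i=1}^{k+1}l_i$; moreover $\pd(\mathcal{H}_{S_\mu})\le\pd(\mathcal{H})$ by \Cref{LaundryList1} \ref{LMa2-2.9c} (as $\mathcal{H}_{S_\mu}=\mathcal{H}:F\subseteq\mathcal{H}$ on the same vertex set), while the short exact sequence $0\to\mathcal{H}_{S_\mu}\to\mathcal{H}\to(\mathcal{H},x_F)\to 0$ together with $\pd((\mathcal{H},x_F))=\pd(\mathcal{H}_{S_\mu})+1$ — here $(\mathcal{H},x_F)$ is the disjoint union of open strings on $n_1,\dots,n_{k+1}$ vertices and one isolated vertex — gives $\pd(\mathcal{H})\le\pd(\mathcal{H}_{S_\mu})+1$. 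So it remains to decide which of the two values $\pd(\mathcal{H})$ takes, and to match the answer to cases (a) and (b).

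I would argue by induction on $k\ge 2$. For the base case $k=2$, the hypergraph $\mathcal{H}$ is a string carrying one $1$-dimensional edge $\{v_1,v_2\}$, that is, an open cycle on the $n_2+2$ vertices from $v_1$ to $v_2$ with two (possibly empty) string tails attached; since $r_1=2$ the tail at $v_1$ has length $\equiv 2\bmod 3$, so \Cref{LMa247} lets one detach the tails one at a time and reduce to the cycle, after which \Cref{LaundryList1} \ref{pdFormulaCycle} finishes, and in the subcase where $n_2$ or $n_3$ equals $0$ an end of the string coincides with a vertex of $F$ and \Cref{Stringedge} applies directly instead.

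For the inductive step ($k\ge 3$) consider the short exact sequence $0\to\mathcal{Q}_{v_1}\to\mathcal{H}_{v_1}\to\mathcal{H}\to 0$ of \Cref{shortexact}, where $\mathcal{H}_{v_1}$ deletes $v_1$ from $\mathcal{H}$ and $\mathcal{Q}_{v_1}=\mathcal{H}_{v_1}:v_1$. Since $k>1$ the generator $m_{v_1}$ has degree $3$, and since $r_1=2$ we have $n_1\ge 2$, so $\mathcal{H}_{v_1}$ is the disjoint union of an open string on $n_1$ vertices and a hypergraph $\mathcal{H}'$ that is a string carrying a $(k-1)$-vertex edge with spacing profile $(n_2,\dots,n_{k+1})$, for which $\sum_{i=2}^{k+1}r_i=2(k-1)$. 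After reversing $\mathcal{H}'$ if necessary so that its first residue is $2$ — always possible unless $r_2=r_{k+1}=1$, in which case $\mathcal{H}'$ is exactly of the type covered by \Cref{StinkyString2k} — the inductive hypothesis (or \Cref{StinkyString2k}) gives $\pd(\mathcal{H}')=2(k-1)+2\sum_{i=2}^{k+1}l_i+\epsilon$, where $\epsilon=1$ if no $r_i=0$ and $\epsilon=0$ otherwise; since the residues of $\mathcal{H}'$ are $r_2,\dots,r_{k+1}$ this $\epsilon$ is the same as for $\mathcal{H}$. Combining with \Cref{LaundryList1} \ref{pdFormulaString} and the floor identities recorded before \Cref{lem:<2k}, this yields $\pd(\mathcal{H}_{v_1})=(n_1-l_1)+\pd(\mathcal{H}')=2k+2\sum_{i=1}^{k+1}l_i+\epsilon=\pd(\mathcal{H}_{S_\mu})+\epsilon$. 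A direct inspection of the colon ideal $I(\mathcal{H}_{v_1}):m_{v_1}$ shows that $\mathcal{Q}_{v_1}$ is the disjoint union of two isolated vertices, an open string on $n_1-2$ vertices, and an open string on $(k-1)+\sum_{i=2}^{k+1}n_i-2$ vertices; a final floor computation then gives $\pd(\mathcal{Q}_{v_1})=2k+2\sum_{i=1}^{k+1}l_i-1$, which is strictly smaller than $\pd(\mathcal{H}_{v_1})$. By \Cref{shortexact} we conclude $\pd(\mathcal{H})=\pd(\mathcal{H}_{v_1})=\pd(\mathcal{H}_{S_\mu})+\epsilon$, which is the assertion.

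The step I expect to be most delicate is the uniform description of $\mathcal{Q}_{v_1}$: one must check that the colon ideal $I(\mathcal{H}_{v_1}):m_{v_1}$, a colon against a degree-$3$ monomial, corresponds to exactly two isolated vertices and two open strings of the stated lengths regardless of how the residues $r_2,\dots,r_{k+1}$ are placed, and in the degenerate configurations $n_1=2$, $n_2\in\{0,1\}$, or $n_{k+1}=0$ (in the last case one can instead appeal to \Cref{Stringedge} directly); one must also keep the reversal reduction honest so that the inductive hypothesis is always applied either to a hypergraph whose first residue is $2$ or to one fitting the hypotheses of \Cref{StinkyString2k}.
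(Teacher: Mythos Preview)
Your inductive step is essentially the paper's argument: both use the short exact sequence $0\to\mathcal{Q}_{v_1}\to\mathcal{H}_{v_1}\to\mathcal{H}\to 0$, identify $\mathcal{H}_{v_1}$ as an open string of length $n_1$ together with a smaller instance $\mathcal{H}'$ carrying a $(k-1)$-vertex edge, and check $\pd(\mathcal{Q}_{v_1})<\pd(\mathcal{H}_{v_1})$ by the same floor computation. Your reversal trick (falling back on \Cref{StinkyString2k} when $r_2=r_{k+1}=1$) is exactly the manoeuvre needed to keep the hypothesis $r_1=2$ alive; the paper does not state it this way but handles the same dichotomy case-by-case. Your preliminary upper bound $\pd(\mathcal{H})\le\pd(\mathcal{H}_{S_\mu})+1$ via the $x_F$-sequence is a pleasant extra that the paper does not record.

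The genuine gap is the base case $k=2$. For a branch of length $n\equiv 2\pmod 3$, \Cref{LMa247}(b) gives $\pd(\mathcal{H})=\pd(\mathcal{H}_w)$, where $w$ is the degree-$3$ vertex itself---here $w=v_1$. Removing $v_1$ destroys the extra edge $\{v_1,v_2\}$ and hence the cycle; what remains is the disjoint union of an open string on $n_1$ vertices and a string of length $n_2+1+n_3$ with \emph{three} closed vertices (its two ends and $v_2$, which acquires the leftover edge $\{v_2\}$). So the promised ``reduction to the cycle'' never happens, and \Cref{LaundryList1}~\ref{pdFormulaCycle} is not what finishes the computation. Moreover, in case~(b) one of $r_2,r_3$ equals $0$ (not $n_2$ or $n_3$), so neither \Cref{LMa247} (which treats only $n\equiv 1,2\pmod 3$) nor \Cref{Stringedge} (which needs an end vertex of the string to coincide with a vertex of $F$) covers the general situation. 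The fix is exactly what the paper does: compute $\pd(\mathcal{H}_{v_1})$ directly from Theorem~3.4 of \cite{LMa1} for strings with interior closed vertices, splitting on whether the two open segments of lengths $n_2-1$ and $n_3-1$ both have residue class forcing the extra $+1$, and then verify $\pd(\mathcal{Q}_{v_1})<\pd(\mathcal{H}_{v_1})$ as you already do in the inductive step.
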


\begin{proof}
We consider the short exact sequence 
\[
0\rightarrow \mathcal{Q}_{v_{1}} \rightarrow \mathcal{H}_{v_{1}}\rightarrow \mathcal{H} \rightarrow0.
\]
 We will show
$\pd(\mathcal{H}_{v_{1}})>\pd(\mathcal{Q}_{v_1})$. 
% Since then if we apply the projective dimension on the short exact sequence, we will have \[
%   \pd(\mathcal{H}_{v_{1}})\leq \max\{\pd(\mathcal{H}),\pd(\mathcal{Q}_{v_1})\}=\pd(\mathcal{H})  \] and 
%   \[\pd(\mathcal{H})\leq\max\{\pd(\mathcal{H}_{v_1}),\pd(\mathcal{Q}_{v_1})+1\}=\pd(\mathcal{H}_{v_1}).\]
Moreover, we will show that when $r_{i}\neq0$ for all $i$, then $\pd(\mathcal{H}_{v_{1}})=\pd(\mathcal{H}_{S_{\mu}})+1$,
and otherwise
$\pd(\mathcal{H}_{v_{1}})=\pd(\mathcal{H}_{S_{\mu}})$. These
two claims with \Cref{shortexact} will prove the proposition.  We proceed by induction
on $k$ for both claims. 

We observe that $\mu=k+\sum_{i=1}^{k+1}n_{i}$ and $\pd(\mathcal{H}_{S_{\mu}})=2k+\sum_{i=1}^{k+1}2l_{i}$ as in the proof of \Cref{StinkyString2k}.

When $k=2$, observe that by Definition 2.6 and Discussion 2.8 in \cite{LMa2}, $\mathcal{Q}_{v_{1}}$ is a union of two isolated
vertices, and two strings of length $n_{1}-2$ and $n_{2}-2+n_{3}+1$.
Hence by \Cref{LaundryList1} \ref{pdFormulaString}, and $r_1=2$ implies $r_2+r_3=2$, 
\begin{align*}\pd(\mathcal{Q}_{v_{1}})& =2+n_{1}-2-\left\lfloor
    \frac{n_{1}-2}{3}\right\rfloor +n_{2}+n_{3}-1-\left\lfloor
    \frac{n_{2}+n_{3}-1}{3}\right\rfloor \\
    & =2+2l_1+2l_2+2l_3+1 \\
  & <4+\sum_{i=1}^{4}2l_{i}=\pd(\mathcal{H}_{S_{\mu}})
\end{align*}
Notice $\mathcal{H}_{v_{1}}$ is a union of a string of length $n_{1}$ and
a string of length $n_2+n_{3}+1$ with two open strings having $n_{2}-1$
and $n_{3}-1$ open vertices. By \Cref{LaundryList1} \ref{BettiString2} and Theorem 3.4 in \cite{LMa1} with $r_2+r_3=2$, we get 
\begin{align*}\pd(\mathcal{H}_{v_{1}})&=n_1-\left\lfloor \frac{n_{1}}{3}\right\rfloor+n_2+n_3+1-(2+ \left\lfloor \frac{n_{2}-1-1}{3}\right\rfloor+\left\lfloor \frac{n_{3}-1-1}{3}\right\rfloor)\\
&=2+2l_1+2l_2+2l_3+3=\pd(\mathcal{H}_{S_{\mu}})+1.
\end{align*}
when $r_{i}\neq0$ for all $i$. When $r_{i}=0$ for some $i$, then \begin{align*}\pd(\mathcal{H}_{v_{1}})&=n_1-\left\lfloor \frac{n_{1}}{3}\right\rfloor+n_2+n_3+1-(2+ \left\lfloor \frac{n_{2}-1-1}{3}\right\rfloor+\left\lfloor \frac{n_{3}-1-1}{3}\right\rfloor)\\
&=2+2l_1+2l_2+2l_3+2=\pd(\mathcal{H}_{S_{\mu}}).
\end{align*}

In both cases, we have $\pd(\mathcal{H}_{v_{1}})>\pd(\mathcal{Q}_{v})$ and
this concludes the case when $k=2$.

Now suppose $F$ is a $k$-dimensional edge with $k+1$ vertices, $v_{1},...,v_{k+1}.$
Suppose $r_{i}\neq0$ for all $i$ then again by Definition 2.6 and
discussion 2.8 in \cite{LMa2}, $\mathcal{Q}_{v_{1}}$ is either
\begin{enumerate}
\item  a union of
two isolated vertices, a string of length $n_{1}-2$ and a string
of length $\sum_{i=2}^{k+2}n_{i}+k-2$ when $n_{2}\geq2$, or 
\item $\mathcal{Q}_{v_{1}}$
is a union of two isolated vertices, a string of length $n_{1}-2$
and a string of length $\sum_{i=3}^{k+2}n_{i}+k-1$ when $n_{2}=1$.
\end{enumerate}

For the later case, $\sum_{i=3}^{k+2}r_{i}=2k-1$, then by \Cref{LaundryList1} \ref{pdFormulaString}, \begin{align*}\pd(\mathcal{Q}_{v_1})&=2+n_{1}-2-\left\lfloor
                                                           \frac{n_{1}-2}{3}\right\rfloor
                                                           +k-1+\sum_{i=3}^{k+2}n_{i}-\left\lfloor
                                                           \frac{k-1+\sum_{i=3}^{k+2}n_{i}}{3}\right\rfloor\\
                                   &=2+2l_{1}+k-1+2k-1+\sum_{i=3}^{k+2}3l_{i}-\left\lfloor
                                                    \frac{k-1+2k-1+\sum_{i=3}^{k+2}3l_{i}}{3}\right\rfloor\\
                     &=2l_1+2k+\sum_{i=3}^{k+2}(2l_{i})+1\\
                       &<2k+2+\sum_{i=1}^{k+2}(2l_{i})+1.\\
                      \end{align*}
  
For the first case, by \Cref{LaundryList1} \ref{pdFormulaString}, we
have \begin{align*}\pd(\mathcal{Q}_{v_{1}})&=2+n_{1}-2-\left\lfloor
                                                 \frac{n_{1}-2}{3}\right\rfloor
                                                 +k-2+\sum_{i=2}^{k+2}n_{i}-\left\lfloor
                                                 \frac{k-2+\sum_{i=2}^{k+2}n_{i}}{3}\right\rfloor\\
                                        &=2l_{1}+k+2k+\sum_{i=2}^{k+2}3l_{i}-\left\lfloor
                                                 \frac{k-2+2k+\sum_{i=2}^{k+2}3l_{i}}{3}\right\rfloor\\
                                        &=2k+1+\sum_{i=1}^{k+2}2l_{i}\\
       &<2k+2+\sum_{i=1}^{k+2}2l_{i}+1 
       \end{align*}
with the fact $r_{1}+...+r_{k+2}=2k+2$ and $r_{1}=2$. 

Now in both cases consider that $\mathcal{H}_{v_{1}}$ is a union of a string of length $n_{1}$ and a
string of length $\sum_{i=2}^{k+2}n_{i}+k$ with a $k-1$-dimensional
edge. Notice that $\sum_{i=2}^{k+2}r_{i}=2k$ and $r_{i}\neq0$ for all
$i$ such that $2\leq i\leq k+2$. By induction and \Cref{LaundryList1} \ref{pdFormulaString},
we have \begin{align*}\pd(\mathcal{H}_{v_{1}})&=n_{1}-\left\lfloor
                                                    \frac{n_{1}}{3}\right\rfloor
                                                    +k+\sum_{i=2}^{k+2}n_{i}-\left\lfloor
                                                    \frac{k+\sum_{i=2}^{k+2}n_{i}}{3}\right\rfloor+1\\
                                            &=2k+2+\sum_{i=1}^{k+2}2l_{i}+1 =\pd(\mathcal{H}_{S_{\mu}})+1
                                                  \end{align*}
where we use the fact that $r_{2}+...+r_{k+2}=2k$ and $r_{1}=2$.
Therefore $\pd(\mathcal{Q}_{v_{1}})<\pd(\mathcal{H}_{v_{1}})$,
and
$\pd(\mathcal{H})=\pd(\mathcal{H}_{v_{1}})=\pd(\mathcal{H}_{S_{\mu}})+1$,
satisfying the 2 claims.

Now suppose $r_{i}=0$ for some $i>1$ then $r_{j}=2$ for all $j\neq i$.
Notice again by the discussions in \cite{LMa2} that
$\mathcal{Q}_{v_{1}}$ is either
\begin{enumerate}
\item  a union of two isolated vertices, a string
of length $n_{1}-2$ and a string of length $\sum_{i=2}^{k+2}n_{i}+k-2$
when $n_{2}\geq2$, or 
\item $\mathcal{Q}_{v_{1}}$ is a union of two isolated vertices,
a string of length $n_{1}-2$ and a string of length $\sum_{i=3}^{k+2}n_{i}+k-2$
when $n_{2}=0$. 
\end{enumerate}

For the later case, we have $\sum_{i=3}^{k+2}r_{i}=2k$.
By \Cref{LaundryList1} \ref{pdFormulaString}, 
\begin{align*}\pd(\mathcal{Q}_{v_{1}})&=2+n_{1}-2-\left\lfloor
  \frac{n_{1}-2}{3}\right\rfloor
  +k-2+\sum_{i=3}^{k+2}n_{i}-\left\lfloor
  \frac{k-2+\sum_{i=3}^{k+2}n_{i}}{3}\right\rfloor\\
  &=2l_1+k+2k
  +\sum_{i=3}^{k+2}3l_{i}-\left\lfloor
  \frac{k-2+2k+\sum_{i=3}^{k+2}3l_{i}}{3}\right\rfloor\\
  &=2k+1
  +\sum_{i=1}^{k+2}2l_{i}\\
  &<2k+2+\sum_{i=1}^{k+2}2l_{i}. \end{align*}
For the first case, by \Cref{LaundryList1} \ref{pdFormulaString}, we
have \begin{align*}\pd(\mathcal{Q}_{v_{1}})&=2+n_{1}-2-\left\lfloor
                                                 \frac{n_{1}-2}{3}\right\rfloor
                                                 +k-2+\sum_{i=2}^{k+2}n_{i}-\left\lfloor
                                                 \frac{k-2+\sum_{i=2}^{k+2}n_{i}}{3}\right\rfloor
       \\  &=2l_1+k+2k
  +\sum_{i=2}^{k+2}3l_{i}-\left\lfloor
  \frac{k-2+2k+\sum_{i=2}^{k+2}3l_{i}}{3}\right\rfloor\\
  &=2k+1
  +\sum_{i=1}^{k+2}2l_{i}\\
  &<2k+2+\sum_{i=1}^{k+2}2l_{i}. \end{align*}
with the fact that $r_{2}+...+r_{k+2}=2k$ and $r_{1}=2$. Similar to
the case where $r_i$ is never $0$, we get that  $\mathcal{H}_{v_{1}}$ is a union of a string of length $n_{1}$
and a string of length $\sum_{i=2}^{k+2}n_{i}+k$ with a $k-1$-dimensional
edge. Notice that $\sum_{i=2}^{k+2}r_{i}=2k$ and $r_{i}=0$ for some $2\leq i\leq k+2$.
So by induction and \Cref{LaundryList1} \ref{pdFormulaString}, we
have \begin{align*}\pd(\mathcal{H}_{v_{1}})&=n_{1}-\left\lfloor
                                                 \frac{n_{1}}{3}\right\rfloor
                                                 +k+\sum_{i=2}^{k+2}n_{i}-\left\lfloor
                                                 \frac{k+\sum_{i=2}^{k+2}n_{i}}{3}\right\rfloor
       \\  &=2k+2+\sum_{i=1}^{k+2}2l_{i}\\
       &=\pd (\mathcal{H}_{\mathcal{S}_{\mu}})  \end{align*}
where we use the fact that $r_{2}+...+r_{k+2}=2k$ and $r_{1}=2$.
Hence $\pd(\mathcal{Q}_{v_{1}})<\pd(\mathcal{H}_{v_{1}})$, and
$\pd(\mathcal{H})=\pd(\mathcal{H}_{v_{1}})=\pd(\mathcal{H}_{S_{\mu}})$,
thus finishing the proof. 
\end{proof}

Now tying together \Cref{String2k-1}, \Cref{String2k+1}, and
\Cref{String2k} we can prove the following result.

\begin{theorem}
\label{StringedgeAll}Let $\mathcal{H}_{S_{\mu}}$ be a string hypergraph such
that it has $\mu$ vertices with all vertices are open except the end
vertices. We adapt  \Cref{stringwithEdge}. If $r_{1}+...+r_{k+1}=2k$
and $r_{i}\neq0$ for all $i$, then $\pd(\mathcal{H})=\pd(\mathcal{H}_{S_{\mu}})+1$
otherwise $\pd(\mathcal{H})=\pd(\mathcal{H}_{S_{\mu}})=\mu-\left\lfloor \frac{\mu}{3}\right\rfloor $.

\end{theorem}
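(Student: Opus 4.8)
The plan is to reduce directly to the three propositions already proved by a case analysis on $\sigma := \sum_{i=1}^{k+1} r_i$. Since each $r_i \in \{0,1,2\}$ and there are $k+1$ of them, $\sigma$ takes values in $\{0,1,\dots,2k+2\}$, and the three ranges $\sigma < 2k$, $\sigma = 2k$, $\sigma \geq 2k+1$ are exhaustive and pairwise disjoint. First I would record, using \Cref{stringwithEdge}, that $\mu = k + \sum_{i=1}^{k+1} n_i$, so that \Cref{LaundryList1} \ref{pdFormulaString} gives $\pd(\mathcal{H}_{S_\mu}) = \mu - \lfloor \mu/3\rfloor$; this pins down the numerical value claimed in the ``otherwise'' branch.

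In the range $\sigma < 2k$, \Cref{String2k-1} gives $\pd(\mathcal{H}) = \pd(\mathcal{H}_{S_\mu})$, and in the range $\sigma \geq 2k+1$, \Cref{String2k+1} gives the same. In both of these ranges the hypothesis ``$\sigma = 2k$ and all $r_i \neq 0$'' fails, so the conclusion we want is precisely the ``otherwise'' one, which is what we have obtained. The only delicate range is $\sigma = 2k$: here \Cref{String2k} applies, but it is stated under the normalization $r_1 = 2$, which the theorem does not assume, so the task is to reduce to that normalization (or to recognize the residual case).

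For the reduction I would use the order-reversing symmetry of the string: relabeling $w_i \mapsto w_{\mu+1-i}$ carries $\mathcal{H}$ to an isomorphic hypergraph (hence leaves both $\pd(\mathcal{H})$ and $\pd(\mathcal{H}_{S_\mu})$ unchanged) under which the tuple $(n_1,\dots,n_{k+1})$, and therefore $(r_1,\dots,r_{k+1})$, is reversed, while the conditions $\sigma = 2k$ and ``all $r_i \neq 0$'' are symmetric and hence preserved. Now, when $\sigma = 2k$ and some $r_i = 0$, the deficit from the maximum $2k+2$ is only $2$, so that zero is unique and every other $r_j$ equals $2$; in particular at least one of $r_1, r_{k+1}$ equals $2$, so after possibly reversing we may assume $r_1 = 2$ and invoke the ``otherwise'' branch of \Cref{String2k}, obtaining $\pd(\mathcal{H}) = \pd(\mathcal{H}_{S_\mu})$ --- again the ``otherwise'' conclusion of the theorem. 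When $\sigma = 2k$ and every $r_i \neq 0$, the same deficit count forces exactly two of the $r_i$ to equal $1$ and the rest to equal $2$; if at least one endpoint value $r_1$ or $r_{k+1}$ equals $2$, reverse if needed to get $r_1 = 2$ and apply the ``$r_i \neq 0$ for all $i$'' branch of \Cref{String2k} to conclude $\pd(\mathcal{H}) = \pd(\mathcal{H}_{S_\mu}) + 1$; the unique configuration not covered is $r_1 = r_{k+1} = 1$ with $r_i = 2$ for all $1 < i < k+1$, which is precisely the hypothesis of \Cref{StinkyString2k}, and that lemma gives $\pd(\mathcal{H}) = \pd(\mathcal{H}_{S_\mu}) + 1$ directly. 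Collecting the cases gives the stated dichotomy.

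I expect the only real obstacle to be bookkeeping rather than any new idea: verifying that the reversal symmetry genuinely preserves both the hypotheses and the asserted conclusion, checking that the ``stinky'' configuration $r_1 = r_{k+1} = 1$ with all middle $r_i = 2$ is the one case of $\sigma = 2k$ that cannot be normalized to $r_1 = 2$, and making sure each range of $\sigma$ is matched with the correct side of the theorem's ``if/otherwise'' split.
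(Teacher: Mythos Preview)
Your proposal is correct and follows essentially the same route as the paper: a three-way split on $\sigma$ invoking \Cref{String2k-1}, \Cref{String2k+1}, and then, for $\sigma=2k$, the combination of \Cref{String2k} and \Cref{StinkyString2k}. The only difference is that you make explicit, via the order-reversing symmetry, the justification for ``we may assume $r_1=2$'' that the paper asserts without further comment.
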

\begin{proof}
The final equality is coming from \Cref{LaundryList1} \ref{pdFormulaString}. For the cases when $r_{1}+...+r_{k+1}<2k$ or $r_{1}+...+r_{k+1}>2k$,
we have $\pd(\mathcal{H})=\pd(\mathcal{H}_{S_{\mu}})$ by Proposition \ref{String2k-1}
and Proposition \ref{String2k+1}. We are left to consider the case, $r_{1}+...+r_{k+1}=2k$. If $r_i=0$ for some $i$, then $r_j=2$ for all $j\neq i$, hence \Cref{String2k} applies. If $r_i \neq 0$ for all $i$, and $r_1=r_{k+1}=1$, then \Cref{StinkyString2k} applies, otherwise, we may assume $r_1=2$ and \Cref{String2k} applies again.
% Notice the assumptions of \Cref{StinkyString2k} gives $r_{1}+...+r_{k+1}=2k$
% and $r_{i}\neq0$ for all $i$. Hence by \Cref{StinkyString2k}
% and \Cref{String2k}, the conclusion follows. 
\end{proof}

\begin{example} \label{diffPd}
Three hypergraphs in \Cref{fig:diffpd} are strings attached with one extra blue edges (checkerboard region) such that $\mu = 9$, $k = 3$ and $r_{1}+...+r_{k+1}=2k=6$. $\mathcal{H}_1$ and $\mathcal{H}_2$ satisfy $r_{i}\neq0$ for all $i$, but $\mathcal{H}_3$ has $r_4=0$. Therefore $\pd(\mathcal{H}_1)=\pd(\mathcal{H}_2)=\pd(\mathcal{H}_{\mathcal{S}_{\mu}})+1=9-\left\lfloor
                                                 \frac{9}{3}\right\rfloor+1=7$, and $\pd(\mathcal{H}_3)=\pd(\mathcal{H}_{\mathcal{S}_{\mu}})=6$ by \Cref{StringedgeAll}.
% Will change this into three Hypergraphs to reflect the positions of vertices of $F$ matter a lot.
% The hypergraph shown in Figure \ref{Stringedge} shows the string
% hypergraph with a higher dimensional edge with the notation outlined
% above.  In this case, $\mu = 11$, $k = 4$, $n_1 = 1$, $n_2 = n_3 = n_4= 2$, and $n_5 = 0$.

\begin{figure}[h] 
\caption{}\label{fig:diffpd}
\begin{center}
\begin{tikzpicture}[thick, scale=0.7, every node/.style={scale=0.8}]
\shade [shading=ball, ball color=black]  (0,0) circle (.1) node[left]{$w_1$};
\draw  [shape=circle] (1,-1.2) circle (.1) node[left]{$w_2$} ;
\draw  [shape=circle] (2,-2) circle (.1)  node[left]{$w_3= v_1$};
\draw  [shape=circle] (3.7,-2.5) circle (.1) node[below]{$w_4$} ;
\draw  [shape=circle] (4.7,-1.5) circle (.1) node[right]{$w_5 = v_2$} ;
\draw  [shape=circle] (5,0) circle (.1) node[right]{$w_6$} ;
\draw  [shape=circle] (4,1.2) circle (.1) node[right]{$w_7$} ;
\draw  [shape=circle] (2.5,1.7) circle (.1) node[right]{$w_8= v_3$} ;
\draw  [shading=ball, ball color=black] (1,1.2) circle (.1) node[above]{$w_9$} ;

\draw  (3.7,-3.5) node[below]{$\mathcal{H}_1$} ;

\draw [line width=1.2pt] (1,-1.2)--(0,0);
\draw [line width=1.2pt] (1,-1.2)--(2,-2);
\draw [line width=1.2pt] (2,-2)--(3.7,-2.5);
\draw [line width=1.2pt] (3.7,-2.5)--(4.7,-1.5);
\draw [line width=1.2pt ] (4.7,-1.5)--(5,0);
\draw [line width=1.2pt] (5,0)--(4,1.2);
\draw [line width=1.2pt] (4,1.2)--(2.5,1.7);
\draw [line width=1.2pt] (2.5,1.7)--(1,1.2);

\path [pattern=checkerboard, pattern  color=blue]   (2,-2)--(4.7,-1.5)--(2.5,1.7)--cycle;

\shade [shading=ball, ball color=black]  (7,0) circle (.1) node[left]{$w_1$};
\draw  [shape=circle] (8,-1.2) circle (.1) node[left]{$w_2$} ;
\draw  [shape=circle] (9,-2) circle (.1)  node[left]{$w_3= v_1$};
\draw  [shape=circle] (10.7,-2.5) circle (.1) node[below]{$w_4$} ;
\draw  [shape=circle] (11.7,-1.5) circle (.1) node[right]{$w_5 = v_2$} ;
\draw  [shape=circle] (12,0) circle (.1) node[right]{$w_6$} ;
\draw  [shape=circle] (11,1.2) circle (.1) node[right]{$w_7= v_3$} ;
\draw  [shape=circle] (9.5,1.7) circle (.1) node[right]{$w_8$} ;
\draw  [shading=ball, ball color=black] (8,1.2) circle (.1) node[above]{$w_9$} ;

\draw  (10.7,-3.5) node[below]{$\mathcal{H}_2$} ;
\draw [line width=1.2pt] (8,-1.2)--(7,0);
\draw [line width=1.2pt] (8,-1.2)--(9,-2);
\draw [line width=1.2pt] (9,-2)--(10.7,-2.5);
\draw [line width=1.2pt] (10.7,-2.5)--(11.7,-1.5);
\draw [line width=1.2pt ] (11.7,-1.5)--(12,0);
\draw [line width=1.2pt] (12,0)--(11,1.2);
\draw [line width=1.2pt] (11,1.2)--(9.5,1.7);
\draw [line width=1.2pt] (9.5,1.7)--(8,1.2);

\path [pattern=checkerboard, pattern  color=blue]   (9,-2)--(11.7,-1.5)--(11,1.2)--cycle;

\shade [shading=ball, ball color=black]  (14,0) circle (.1) node[left]{$w_1$};
\draw  [shape=circle] (15,-1.2) circle (.1) node[left]{$w_2$} ;
\draw  [shape=circle] (16,-2) circle (.1)  node[left]{$w_3= v_1$};
\draw  [shape=circle] (17.7,-2.5) circle (.1) node[below]{$w_4$} ;
\draw  [shape=circle] (18.7,-1.5) circle (.1) node[right]{$w_5$} ;
\draw  [shape=circle] (19,0) circle (.1) node[right]{$w_6= v_2$} ;
\draw  [shape=circle] (18,1.2) circle (.1) node[right]{$w_7$} ;
\draw  [shape=circle] (16.5,1.7) circle (.1) node[right]{$w_8$} ;
\draw  [shading=ball, ball color=black] (15,1.2) circle (.1) node[above]{$w_9= v_3$} ;

\draw  (17.7,-3.5) node[below]{$\mathcal{H}_3$} ;
\draw [line width=1.2pt] (15,-1.2)--(14,0);
\draw [line width=1.2pt] (15,-1.2)--(16,-2);
\draw [line width=1.2pt] (16,-2)--(17.7,-2.5);
\draw [line width=1.2pt] (17.7,-2.5)--(18.7,-1.5);
\draw [line width=1.2pt ] (18.7,-1.5)--(19,0);
\draw [line width=1.2pt] (19,0)--(18,1.2);
\draw [line width=1.2pt] (18,1.2)--(16.5,1.7);
\draw [line width=1.2pt] (16.5,1.7)--(15,1.2);

\path [pattern=checkerboard, pattern  color=blue]   (16,-2)--(19,0)--(15,1.2)--cycle;

\end{tikzpicture}
\end{center}
\end{figure}
\end{example}

\begin{remark}
With \Cref{StringedgeAll}, one can easily compute the projective dimension of a string with extra edges. We first remove all the edges that are union of other edges using \Cref{LaundryList1} \ref{removeMeets}. If we are left with one higher dimensional edge, we can use \Cref{StringedgeAll} and \Cref{LaundryList1} \ref{pdFormulaString} to compute the projective dimension. \Cref{removeEdges} illustrates the process.
\end{remark}

\begin{example} \label{removeEdges}
Let $\mathcal{H}$ be the hypergraph  shown in \Cref{HGStringAll}. We can remove all red edges (dashed vertical line and the bricks regions) using \Cref{LaundryList1} \ref{removeMeets} and remove the blue edge (checkerboard regions) using \Cref{StringedgeAll}. The projective dimension of the hypergraph is $11-\left\lfloor \frac{11}{3}\right\rfloor=8$.

\begin{figure}[h] 
\caption{}\label{HGStringAll}
\begin{center}
\begin{tikzpicture}[thick, scale=0.8]
\draw [shading=ball, ball color=black]  (-2,0) circle (.1) node[left]{$v_1$};
\draw [shape=circle]  (-1,-0.5) circle (.1) node[below]{$v_2$};
\draw [shape=circle]  (0,-0.5) circle (.1) node[below]{$v_3$};
\draw  [shape=circle] (1,-0.5) circle (.1) node[below]{$v_4$} ;
\draw  [shape=circle] (2,0) circle (.1) node[below]{$v_5$} ;
\draw  [shape=circle] (2.5,0.5) circle (.1) node[right]{$v_6$} ;
\draw  [shape=circle] (2,1) circle (.1) node[above]{$v_7$} ;
\draw  [shape=circle] (1,1.5) circle (.1) node[above]{$v_8$} ;
\draw [shape=circle](0,1.5) circle (.1) node[above]{$v_9$};
\draw [shape=circle](-1,1.5) circle (.1) node[above]{$v_{10}$} ;
\draw [shading=ball, ball color=black](-2,1) circle (.1)  node[left]{$v_{11}$} ;
\draw [line width=1.2pt] (-1,-0.5)--(-2,0);
\draw [line width=1.2pt] (-1,-0.5)--(0,-0.5);
\draw [line width=1.2pt] (0,-0.5)--(1,-0.5);
\draw [line width=1.2pt] (1,-0.5)--(2,0);
\draw [line width=1.2pt] (2,0)--(2.5,0.5);
\draw [line width=1.2pt] (2.5,0.5)--(2,1) ;
\draw [line width=1.2pt] (2,1)--(1,1.5);
\draw [line width=1.2pt] (1,1.5)--(0,1.5);
\draw [line width=1.2pt] (-1,1.5)--(0,1.5);
\draw [line width=1.2pt] (-2,1)--(-1,1.5);
\draw [dashed, color=red] (-2,0)--(-2,1);
\path [pattern=bricks, pattern  color=red]   (2,0)--(2.5,0.5)--(2,1)--(1,1.5)--cycle;
\path [pattern=bricks, pattern  color=red]   (-1,-0.5)--(0,-0.5)--(0,1.5)--(-1,1.5)--cycle;
\path [pattern=checkerboard, pattern  color=blue]   (0,-0.5)--(1,-0.5)--(2,0)--(0,1.5)--cycle;
\path (-0.5,-1)--(1.5,-1) node [pos=.5, below ] {$\mathcal{H}$ };

\end{tikzpicture}
\end{center}
\end{figure}

\end{example}

\section{Cycles with higher dimensional edges}\label{CyclesSec}

Now we examine the case where we have added a higher dimensional
edge to an open cycle.

\begin{notation}\label{CyclewithEdge}\hangindent\leftmargini
\hskip\labelsep
Let $\mathcal{H}$ be a hypergraph and $F$ be an edge of $\mathcal{H}$. 
\begin{enumerate}[label={(\arabic*)},ref={(\arabic*)}]
\item Let $\mathcal{H}_{C_\mu}$ be an open cycle with vertex set $V=\{w_1, \dots,
w_\mu\}$, and an edge $F$ with $k>1$ vertices $\{v_1=w_{i_1}, \dots, v_k=w_{i_k}
\}\subseteq \{w_1, \dots,
w_\mu\}$ with $1\leq i_1< \dots <i_k \leq \mu$. When we say $\mathcal{H}$ is \emph{ an open cycle together with an edge consisting of $k$ vertices}, we mean $\mathcal{H}=\mathcal{H}_{C_\mu} \cup F$. 
\item Let $n_j=i_j-i_{j-1}-1$ for $j=1, \dots k$ be the number of vertices between vertices of $F$.
% We use $n_1$ to represent the number of vertices from $w_1$ to $v_1$
% including $w_1$, $n_i$ to represent the vertices between $v_{i-1}$
% and $v_i$ for $i = 2,\dots, k$, and $n_{k+1}$ to represent the number of vertices
% between $v_k$ and $w_{\mu}$ including $w_{\mu}$. 
\item We write $n_i=3l_i+r_i$  where $l_{i}$ are some
non-negative integers, and $0\leq r_{i}\leq2$ for $i=1,...,k+1$.

\end{enumerate}
% \begin{enumerate}[label={(\arabic*)},ref={(\arabic*)}]

% \item When we say $\mathcal{H}$ is \emph{ an open cycle together with
%   an edge consisting of $k$ vertices}, we mean that an edge $F$
% with $k$ vertices $\{v_1, \dots, v_k
% \}$ is attached to an open cycle where we use the notation $\{w_1, \dots,
% w_\mu\}$ to denote the vertices in the cycle,
% $\mathcal{H}_{C_\mu}$.  
% \item We use $n_1$ to represent the number of vertices from $w_1$ to $v_1$
% including $w_1$, $n_i$ to represent the vertices between $v_{i-1}$
% and $v_i$ for $i = 2,\dots, k$, and $n_{k+1}$ to represent the number of vertices
% between $v_k$ and $w_{\mu}$ including $w_{\mu}$. 
% \item We write $n_i=3l_i+r_i$  where $l_{i}$ are some
% non-negative integers, and $0\leq r_{i}\leq2$ for $i=1,...,k+1$.

\end{notation}

We start by showing that the induced hypergraph of
$\mathcal{H}_{C_{\mu}}$ on the complement of $V_F$ has smaller
projective dimension than that of $\mathcal{H}_{C_{\mu}}$ when the sum
of (the $n_i$ modulo 3) is less than $2k-1$.  This is necessary in the proof of Theorem \ref{cycleF}.     

\begin{lemma}\label{CycleRemoveF}
We adapt \Cref{CyclewithEdge}. Let $\mathcal{H}_{V_{F}}=\mathcal{H}_{C_{\mu}}\cap(V\backslash V_{F})$ be the
hypergraph obtained by removing all the vertices $v_{1},...,v_{k}$.
If $r_{1}+...+r_{k}<2k-1$, then $\pd(\mathcal{H}_{V_{F}})<\pd\mathcal{H}_{C_{\mu}}$.
\end{lemma}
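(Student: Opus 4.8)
The plan is to identify $\mathcal{H}_{V_F}$ explicitly as a disjoint union of open strings, compute both projective dimensions in terms of the data $n_i=3l_i+r_i$, and reduce the claimed strict inequality to the hypothesis $r_1+\dots+r_k<2k-1$ by a routine floor computation of the kind already used in \Cref{lem:<2k}.

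\emph{Step 1: identify $\mathcal{H}_{V_F}$.} First I would observe that deleting the $k$ vertices $v_1,\dots,v_k$ of $F$ from the open cycle $\mathcal{H}_{C_\mu}$ breaks the cycle into $k$ arcs, where the $j$-th arc consists of the $n_j$ vertices lying strictly between $v_{j-1}$ and $v_j$ (cyclically). Passing to the induced subhypergraph $\mathcal{H}_{C_\mu}\cap(V\smsm V_F)$, each cycle edge of the form $\{v_{j-1},u\}$ or $\{u,v_j\}$ is replaced by the singleton $\{u\}$; hence the two extreme vertices of each arc become closed and every interior vertex stays open. One checks that the resulting generating sets are already minimal, so each arc is literally an \emph{open string} on $n_j$ vertices, and the $k$ open strings sit on pairwise disjoint edge sets. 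Therefore, by \Cref{LaundryList1} \ref{pdFormulaString} together with additivity of projective dimension over disjoint unions (the remark following \Cref{LaundryList1}),
\[
\pd(\mathcal{H}_{V_F})=\sum_{i=1}^{k}\Big(n_i-\Big\lfloor\tfrac{n_i}{3}\Big\rfloor\Big)=\sum_{i=1}^{k}(2l_i+r_i)=2\sum_{i=1}^{k}l_i+\sum_{i=1}^{k}r_i ,
\]
using $n_i=3l_i+r_i$ with $0\le r_i\le 2$. (When some $n_j=0$ the corresponding arc is empty and contributes $n_j-\lfloor n_j/3\rfloor=0$, so no separate treatment of vanishing gaps is needed.)

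\emph{Step 2: compute $\pd(\mathcal{H}_{C_\mu})$ and compare.} Since $\mu=k+\sum_{i=1}^{k}n_i=k+3\sum_{i=1}^{k}l_i+\sum_{i=1}^{k}r_i$, the identity $\lfloor(3a+b)/3\rfloor=a+\lfloor b/3\rfloor$ and \Cref{LaundryList1} \ref{pdFormulaCycle} give
\[
\pd(\mathcal{H}_{C_\mu})=\mu-1-\Big\lfloor\tfrac{\mu-2}{3}\Big\rfloor=k-1+2\sum_{i=1}^{k}l_i+\sum_{i=1}^{k}r_i-\Big\lfloor\tfrac{k-2+\sum_{i=1}^{k}r_i}{3}\Big\rfloor .
\]
Subtracting the expression from Step 1, the desired inequality $\pd(\mathcal{H}_{V_F})<\pd(\mathcal{H}_{C_\mu})$ is equivalent to $\lfloor(k-2+\sum_i r_i)/3\rfloor<k-1$, hence to $\lfloor(k-2+\sum_i r_i)/3\rfloor\le k-2$, hence to $k-2+\sum_i r_i\le 3k-4$, i.e. to $\sum_{i=1}^{k}r_i\le 2k-2$; since the $r_i$ are integers this is exactly the hypothesis $\sum_{i=1}^{k}r_i<2k-1$, which finishes the proof.

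\emph{Main obstacle.} The only conceptually delicate point is Step 1: one must verify that the broken cycle edges really turn the two ends of each arc into \emph{closed} vertices (and that no re-minimalization collapses anything), so that each arc qualifies as an open string in the sense of \Cref{LaundryList1} \ref{pdFormulaString} rather than a path with open ends. Once this structural description is in hand, Step 2 is a purely mechanical manipulation of floors, entirely parallel to \Cref{lem:<2k}.
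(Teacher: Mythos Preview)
Your proof is correct and follows essentially the same approach as the paper: identify $\mathcal{H}_{V_F}$ as a disjoint union of $k$ open strings, compute both projective dimensions via \Cref{LaundryList1} \ref{pdFormulaString} and \ref{pdFormulaCycle}, and reduce to a floor inequality governed by the hypothesis $\sum r_i<2k-1$. The only difference is cosmetic: the paper plugs the bound $\sum r_i\le 2k-2$ directly into the floor expression for $\pd(\mathcal{H}_{C_\mu})$, whereas you subtract the two formulas first and then isolate the equivalent condition $\lfloor(k-2+\sum r_i)/3\rfloor\le k-2$.
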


\begin{proof}
Notice that $\mu=k+\sum_{i=1}^{k} n_{i}$
and \begin{align*}\pd\mathcal{H}_{C_{\mu}}&=\mu-1-\left\lfloor
                                                  \frac{\mu-2}{3}\right\rfloor\\
      &=k-1+\sum_{i=1}^{k}n_{i}-\left\lfloor
        \frac{k-2+\sum_{i=1}^{k}n_{i}}{3}\right\rfloor\\
        &>k-1+\sum_{i=1}^{k}n_{i}-\left\lfloor
        \frac{k-2+2k-1+\sum_{i=1}^{k}3l_{i}}{3}\right\rfloor\\
        &=\sum_{i=1}^{k}n_{i}-\sum_{i=1}^{k}l_{i}\end{align*} by
    \Cref{LaundryList1} \ref{pdFormulaCycle} and the assumption $r_{1}+...+r_{k}<2k-1$. On the other hand, $\pd\mathcal{H}_{V_F}=\sum_{i=1}^{k}(n_{i}-\left\lfloor \frac{n_{i}}{3}\right\rfloor )=\sum_{i=1}^{k}n_{i}-\sum_{i=1}^{k}l_{i}$
because $\mathcal{H}_{V_F}$ is union of $k$ strings such that each string has
$n_{i}$ vertices. Notice that this notation allows that $n_{i}$ can be
$0$ for
some $i$. Hence we have $\pd(\mathcal{H}_{V_{F}})<\pd\mathcal{H}_{C_{\mu}}$.

% It is sufficient to show that 
% \[
% \sum_{i=1}^{k}(n_{i}-\left\lfloor \frac{n_{i}}{3}\right\rfloor )<k-1+\sum_{i=1}^{k}n_{i}-\left\lfloor \frac{k-2+\sum_{i=1}^{k}n_{i}}{3}\right\rfloor 
% \]
%  which is equivalent to showing 
% \[
% \left\lfloor \frac{\sum_{i=1}^{k}n_{i}+k-2}{3}\right\rfloor <\sum_{i=1}^{k}\left\lfloor \frac{n_{i}}{3}\right\rfloor +k-1
% \]
%  or 
% \[
% \left\lfloor \frac{\sum_{i=1}^{k}r_{i}+k-2}{3}\right\rfloor <k-1,
% \]
%  which is true by the assumption $\sum_{i=1}^{k}r_{i}<2k-1$.
\end{proof}

Next, we show that when the sum of the $n_i$ modulo 3 is greater than
$2k-1$ that the projective dimension of $\mathcal{H}$ is the same as
for the underlying cycle. 

\begin{lemma}\label{CycleLarge}
We adapt \Cref{CyclewithEdge}. If $\sum_{i=1}^{k}r_{i}\geq2k-1$, then
$\pd(\mathcal{H})=\pd(\mathcal{H}_{C_{\mu}})$.
\end{lemma}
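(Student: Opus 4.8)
The idea is to reduce the cycle case to the string results of \Cref{StringsSec} by deleting a single vertex of $F$. First I would record the elementary arithmetic. Since $\mu=k+\sum_{i=1}^{k}n_i$ and $0\le r_i\le 2$ while $\sum_{i=1}^{k}r_i\ge 2k-1$, every $r_i\in\{1,2\}$, at most one $r_i$ equals $1$, and in particular every $n_i\ge 1$, so $\mu\ge 2k$. Moreover $\mu\equiv k+\sum_{i=1}^k r_i$ is $\equiv 0$ or $2\pmod 3$ (according as $\sum r_i=2k$ or $2k-1$), so $\mu\not\equiv 1\pmod 3$; hence $\left\lfloor\frac{\mu-1}{3}\right\rfloor=\left\lfloor\frac{\mu-2}{3}\right\rfloor$, and by \Cref{LaundryList1} \ref{pdFormulaCycle} and \ref{pdFormulaString},
\[
\pd(\mathcal{H}_{C_\mu})=\mu-1-\left\lfloor\frac{\mu-2}{3}\right\rfloor=(\mu-1)-\left\lfloor\frac{\mu-1}{3}\right\rfloor,
\]
which is the projective dimension of an open string on $\mu-1$ vertices.

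Next, I would choose an index $j$ with $r_{j+1}=2$ (indices mod $k$; possible since at most one residue is $1$) and set $v=v_j$. Deleting $v$ opens the cycle into an open string on the remaining $\mu-1$ vertices, whose two ends are the former neighbours of $v$, while $F$ restricts to an edge $F'$ on the $k-1$ vertices $\{v_1,\dots,\widehat{v_j},\dots,v_k\}$ whose gap sequence is the cyclic rotation of $(n_1,\dots,n_k)$ starting at $n_{j+1}$. Thus, when $k\ge 3$, $\mathcal{H}_v$ is an open string on $\mu-1$ vertices together with an edge on $k-1$ vertices in the sense of \Cref{stringwithEdge}, and the residue sum of its $k$ gaps equals $\sum_{i=1}^k r_i\ge 2k-1=2(k-1)+1$; so \Cref{String2k+1} (applied with its ``$k$'' equal to $k-1$) gives
\[
\pd(\mathcal{H}_v)=(\mu-1)-\left\lfloor\frac{\mu-1}{3}\right\rfloor=\pd(\mathcal{H}_{C_\mu}).
\]
For $k=2$ the ``edge'' $F'=\{v_2\}$ merely turns $v_2$ into a closed interior vertex of the string, splitting it into two open strings glued at $v_2$, of lengths $n_1+1$ and $n_2+1$; this case I would settle by the same kind of short exact sequence bookkeeping used for the string hypergraphs in \Cref{StringsSec} (in the spirit of \Cref{2Stringedge} and \Cref{SubStinky}), again obtaining $\pd(\mathcal{H}_v)=\pd(\mathcal{H}_{C_\mu})$, and the handful of genuinely small instances by direct inspection.

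Finally I would feed $v$ into the short exact sequence of \Cref{shortexact}, $0\rightarrow \mathcal{Q}_v\rightarrow \mathcal{H}_v\rightarrow \mathcal{H}\rightarrow 0$, and show $\pd(\mathcal{Q}_v)<\pd(\mathcal{H}_v)$, which by \Cref{shortexact} forces $\pd(\mathcal{H})=\pd(\mathcal{H}_v)=\pd(\mathcal{H}_{C_\mu})$. Since every $n_i\ge 1$, the vertex $v$ has degree $3$, so $m_v$ is $x_F$ times the two cycle-edge variables at $v$, and passing to $I_v:m_v$ removes $F$ and both cycle edges at $v$. Exactly as in the $\mathcal{Q}_v$-computations inside the proofs of \Cref{String2k+1} and \Cref{SubStinky}, the two former neighbours of $v$ become isolated closed vertices, the next two vertices along the cycle become redundant generators, and what is left is an open string on $\mu-5$ vertices (now entirely open, $F$ having been removed); this is non-degenerate when $k\ge 3$ because $\mu\ge 2k\ge 6$. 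Hence by \Cref{LaundryList1} \ref{pdFormulaString},
\[
\pd(\mathcal{Q}_v)=2+(\mu-5)-\left\lfloor\frac{\mu-5}{3}\right\rfloor,
\]
and since $\left\lfloor\frac{\mu-2}{3}\right\rfloor=\left\lfloor\frac{\mu-5}{3}\right\rfloor+1$ this equals $\pd(\mathcal{H}_{C_\mu})-1<\pd(\mathcal{H}_v)$, as needed.

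The step I expect to be the real work is the combinatorial identification of the colon hypergraph $\mathcal{Q}_v$: one must verify that the collapsing at $v$ produces exactly the claimed $(\mu-5)$-vertex open string even when one of the gaps $n_{j\pm 1}$ equals $1$ or $2$ (so that the collapse absorbs an actual vertex of $F$), and one must dispose cleanly of the degenerate small cases, which cluster at $k=2$ and at small $\mu$ where $\mu-5\le 0$. The remaining content is floor-function bookkeeping of the same flavour as \Cref{lem:<2k} and \Cref{lem:>2k+1}.
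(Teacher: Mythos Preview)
Your proposal is correct and follows essentially the same route as the paper: delete a vertex $v$ of $F$ so that $\mathcal{H}_v$ is an open string on $\mu-1$ vertices with a $(k-1)$-vertex edge whose residue sum is $\ge 2(k-1)+1$, invoke \Cref{String2k+1} to get $\pd(\mathcal{H}_v)=\pd(\mathcal{H}_{C_\mu})$, compute $\pd(\mathcal{Q}_v)$ as two isolated vertices plus an open string on $\mu-5$ vertices, and conclude via \Cref{shortexact}. The paper does exactly this (choosing $v=v_1$ after arranging $r_k$ to be the possible $1$), with the arithmetic packaged as $\pd(\mathcal{H}_{C_\mu})=\sum n_i-\sum l_i$ rather than your equivalent $(\mu-1)-\lfloor(\mu-1)/3\rfloor$ via $\mu\not\equiv 1\pmod 3$.

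The one place your sketch is looser than the paper is the $k=2$ case. Your references to \Cref{2Stringedge} and \Cref{SubStinky} are not the right tools there: $\mathcal{H}_v$ is simply a string with a single closed interior vertex, and the paper computes its projective dimension directly from Theorem~3.4 of \cite{LMa1} (strings with prescribed open runs), not via the higher-edge machinery of \Cref{StringsSec}. This is a citation issue rather than a mathematical gap; the computation itself is routine. Your caveat about the degenerate small cases is also shared by the paper, which handles $k=2$ separately and does not otherwise dwell on the boundary situations where a collapsed vertex coincides with some $v_i$.
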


\begin{proof}
By \Cref{LaundryList1} \ref{pdFormulaCycle} and the assumption $2k\geq \sum_{i=1}^{k}r_{i}\geq2k-1$, we have
\begin{align*}
\pd(\mathcal{H}_{C_{\mu}}) & =k-1+\sum_{i=1}^{k}n_{i}-\left\lfloor \frac{k-2+\sum_{i=1}^{n}n_{i}}{3}\right\rfloor \\
& =k-1+\sum_{i=1}^{k}n_{i}-\left\lfloor \frac{k-2+\sum_{i=1}^{k}r_{i}+\sum_{i=1}^{k}3l_{i}}{3}\right\rfloor \\
 & =\sum_{i=1}^{k}n_{i}-\sum_{i=1}^{k}l_{i}.
\end{align*}
Since $\sum_{i=1}^{k}r_{i}\geq2k-1$,
$r_{i}<3$, and $k>1$, we have at most one $r_{i}$ such that $r_{i}=1$.
We may assume $r_{k}=1$ if there is one otherwise $r_{i}=2$ for all $i$. 
% Let $r_{1}=2$ and $r_{2}=2$ and vertex $v_{1}$ is the
% vertex between between those vertices. 
Let $\mathcal{H}_{v_{1}}=\mathcal{H}_{v}$ be the hypergraph removing the vertex $v_{1}=v$ from $\mathcal{H}$
and let $\mathcal{H}_{v}:v_{1}=\mathcal{Q}_{v}$. We have a short exact sequence 
\[
0\rightarrow\mathcal{Q}_{v}\rightarrow\mathcal{H}_{v}\rightarrow\mathcal{H}\rightarrow0.
\]
 We will show that $\pd(\mathcal{H}_{v}) >\pd(\mathcal{Q}_{v})$, then by \Cref{shortexact},
% Then with the facts that $\pd(\mathcal{H}_{v})\leq\text{max}\{\pd(\mathcal{Q}_{v}),\pd(\mathcal{H})\}$
% and \[\pd(\mathcal{H})\leq\text{max}\{\pd(\mathcal{H}_{v}),\pd(\mathcal{Q}_{v})+1\},\]
we conclude that $\pd(\mathcal{H})=\pd(\mathcal{H}_{v})=\pd(\mathcal{H}_{C_{\mu}})$. 

When $k=2$, $\mathcal{H}_{v}$ is a string of length $n_1+n_2+1$ with two open strings of $n_1-1$ and $n_2-1$ open vertices. $\mathcal{Q}_{v}$ is the union of two closed vertices and an open string of $n_1-2+n_2-2+1$ vertices. By \Cref{LaundryList1} \ref{pdFormulaString} and Theorem 3.4 in \cite{LMa1}, we have $$\pd(\mathcal{H}_{v})=n_1+n_2+1-(2+\left\lfloor \frac{n_{1}-2}{3}\right\rfloor+\left\lfloor \frac{n_{2}-2}{3}\right\rfloor)+1=2l_1+2l_2+4,$$
and $$\pd(\mathcal{Q}_{v})=2+n_1+n_2-3-\left\lfloor \frac{n_{1}+n_2-3}{3}\right\rfloor=2l_1+2l_2+3$$ when $r_1+r_2=4$, and $$\pd(\mathcal{H}_{v})=n_1+n_2+1-(2+\left\lfloor \frac{n_{1}-2}{3}\right\rfloor+\left\lfloor \frac{n_{2}-2}{3}\right\rfloor)+1=2l_1+2l_2+3,$$ $$\pd(\mathcal{Q}_{v})=2+n_1+n_2-3-\left\lfloor \frac{n_{1}+n_2-3}{3}\right\rfloor=2l_1+2l_2+2$$ when $r_1+r_2=3$.

For $k>2$, we show $\pd(\mathcal{H}_{v})=k-1+\sum_{i=1}^{k}n_{i}-\left\lfloor \frac{k-1+\sum_{i=1}^{k}n_{i}}{3}\right\rfloor =\sum_{i=1}^{k}n_{i}-\sum_{i=1}^{k}l_{i} >\pd(\mathcal{Q}_{v})$. Notice that $\mathcal{H}_{v}$ is a string of length $\mu-1=k-1+\sum_{i=1}^{k}n_{i}$
attached with a $k-2$-dimensional edge of $k-1$ vertices. Moreover,
$\sum_{i=1}^{k}r_{i}\geq2k-1=2(k-1)+1$. By \Cref{String2k+1},
\begin{align*}
\pd(\mathcal{H}_{v}) & =\pd(\mathcal{H}_{S_{\mu-1}})=k-1+\sum_{i=1}^{k}n_{i}-\left\lfloor \frac{k-1+\sum_{i=1}^{k}n_{i}}{3}\right\rfloor  =\sum_{i=1}^{k}n_{i}-\sum_{i=1}^{k}l_{i}.
\end{align*}
 On the other hand, $\mathcal{Q}_{v}$ is the union of two closed
vertices and an open string of length $n_{1}-2+k-3+\sum_{i=2}^{k}n_{i}$
hence by \Cref{LaundryList1} \ref{pdFormulaString}, 
\begin{align*}
\pd(\mathcal{Q}_{v}) & =2 +k-5+\sum_{i=1}^{k}n_{i}-\left\lfloor \frac{k-5+\sum_{i=1}^{k}n_{i}}{3}\right\rfloor \\
 & \leq k-3+\sum_{i=1}^{k}n_{i}-\left\lfloor \frac{k-5+2k-1+\sum_{i=1}^{k}3l_{i}}{3}\right\rfloor \\
 & =\sum_{i=1}^{k}n_{i}-\sum_{i=1}^{k}l_{i} -1 <\pd(\mathcal{H}_{v})
\end{align*}
 when $\sum_{i=1}^{k}r_{i}\geq2k-1$. 
 \end{proof}

Now we can show that the projective dimension is always preserved
$\mathcal{H}$ and $\mathcal{H}_{C_{\mu}}$ differ by a single higher dimensional edge $F$.  

\begin{theorem}\label{cycleF}
Let $\mathcal{H}_{C_{\mu}}$ be the cycle hypergraph with $\mu$ open vertices.
Let $\mathcal{H}=\mathcal{H}_{C_{\mu}}\cup F$ where $F$ is a $k-1$-dimensional edge with $k$ vertices.
Then $\pd(\mathcal{H})=\pd(\mathcal{H}_{C_{\mu}})=\mu-1-\left\lfloor \frac{\mu-2}{3}\right\rfloor $.
\end{theorem}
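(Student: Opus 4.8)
The plan is to reduce everything to the two lemmas just proved, mirroring the string argument of \Cref{String2k-1}, with \Cref{CycleLarge} playing the role that \Cref{String2k+1} played there. Notice that for a cycle there is no analogue of the delicate ``$\sum r_i = 2k$'' case from the string setting, so only two regimes arise and the projective dimension is always preserved.

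\textbf{Case $\sum_{i=1}^{k} r_i \geq 2k-1$.} This is exactly \Cref{CycleLarge}: $\pd(\mathcal{H}) = \pd(\mathcal{H}_{C_\mu})$, and $\pd(\mathcal{H}_{C_\mu}) = \mu - 1 - \lfloor \tfrac{\mu-2}{3}\rfloor$ by \Cref{LaundryList1} \ref{pdFormulaCycle}.

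\textbf{Case $\sum_{i=1}^{k} r_i < 2k-1$.} Here I would use the short exact sequence
\[
0 \to (\mathcal{H}:F) \to \mathcal{H} \to (\mathcal{H}, x_F) \to 0
\]
attached to the variable $x_F$ of the higher-dimensional edge. The first step is the two bookkeeping identifications. Removing $F$ returns the cycle, so $(\mathcal{H}:F) = \mathcal{H}_{C_\mu}$, and therefore $\pd(\mathcal{H}_{C_\mu}) \leq \pd(\mathcal{H})$ by \Cref{LaundryList1} \ref{LMa2-2.9c}; meanwhile $(\mathcal{H}, x_F)$ is the disjoint union of the isolated closed vertex $x_F$ with $\mathcal{H}_{V_F} = \mathcal{H}_{C_\mu} \cap (V \setminus V_F)$, which is the union of $k$ open strings of sizes $n_1, \dots, n_k$ obtained by deleting the $k$ vertices of $F$ from the cycle, so $\pd((\mathcal{H}, x_F)) = \pd(\mathcal{H}_{V_F}) + 1$. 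The second step is to invoke \Cref{CycleRemoveF}, which in this regime gives $\pd(\mathcal{H}_{V_F}) < \pd(\mathcal{H}_{C_\mu})$, hence $\pd(\mathcal{H}_{V_F}) + 1 \leq \pd(\mathcal{H}_{C_\mu})$. Feeding this into the standard bound from the short exact sequence,
\[
\pd(\mathcal{H}_{C_\mu}) \leq \pd(\mathcal{H}) \leq \max\{\pd(\mathcal{H}_{C_\mu}),\, \pd(\mathcal{H}_{V_F}) + 1\} = \pd(\mathcal{H}_{C_\mu}),
\]
so $\pd(\mathcal{H}) = \pd(\mathcal{H}_{C_\mu}) = \mu - 1 - \lfloor \tfrac{\mu-2}{3}\rfloor$.

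I do not anticipate a genuine obstacle: \Cref{CycleRemoveF} and \Cref{CycleLarge} already carry out all of the floor-function arithmetic. The only points that need a little care are the identifications $(\mathcal{H}:F) = \mathcal{H}_{C_\mu}$ and $(\mathcal{H}, x_F) = \mathcal{H}_{V_F} \sqcup \{x_F\}$ — in particular verifying that the resulting hypergraphs are separated, which is precisely where the hypothesis that every vertex of $F$ lies on the open cycle enters — together with the observation that if $F$ happens to be a union of other edges of $\mathcal{H}$ one may instead simply delete $F$ via \Cref{LaundryList1} \ref{removeMeets} and conclude immediately.
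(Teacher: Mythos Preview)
Your proof is correct and follows essentially the same approach as the paper: split into the two regimes $\sum r_i \geq 2k-1$ (handled by \Cref{CycleLarge}) and $\sum r_i < 2k-1$ (handled via the short exact sequence for $x_F$, the identifications $(\mathcal{H}:F)=\mathcal{H}_{C_\mu}$ and $(\mathcal{H},x_F)=\mathcal{H}_{V_F}\sqcup\{x_F\}$, then \Cref{CycleRemoveF} together with \Cref{LaundryList1} \ref{LMa2-2.9c}). Your additional remarks about separatedness and the redundant-edge case are helpful side observations but do not alter the argument.
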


\begin{proof}
The second equality is coming from \Cref{LaundryList1} \ref{pdFormulaCycle}. 
Let $V_F = \{v_1, \dots, v_k\}$, which is a subset of the vertex set of
$\mathcal{H}_{C_{\mu}}$, be the vertex set of $F$.  Let $n_i =
3l_i + r_i$ be the spacing between the $v_i$ and defined as before.
If $\sum_{i=1}^k r_i \geq 2k -1$ then the theorem holds by 
\Cref{CycleLarge}.  So we need only consider the case when
$\sum_{i=1}^k r_i < 2k -1$.

When $\sum_{i=1}^k r_i < 2k -1$, we will use \Cref{CycleRemoveF} and the short exact sequence 
\[0\rightarrow (\mathcal{H}:F)=\mathcal{H}_{C_{\mu}} \rightarrow\mathcal{H}\rightarrow (\mathcal{H},x_{F}) \rightarrow0\] where $x_F$ is the variable corresponding to the edge $F$.  The short exact sequence gives $\pd (\mathcal{H}) \leq \max \{ \pd (\mathcal{H}_{C_{\mu}}) , \pd (\mathcal{H},x_{F})\}$. Since $(\mathcal{H},x_{F})$ is the union of $\mathcal{H}_{V_F}$ and an isolated vertex presenting the variable of $x_F$, we have $\pd (\mathcal{H},x_{F})=\pd \mathcal{H}_{V_F}+1\leq \pd \mathcal{H}_{C_{\mu}} $ by \Cref{CycleRemoveF}. By \Cref{LaundryList1} \ref{LMa2-2.9c}, we have to $\pd \mathcal{H}_{C_{\mu}} \leq \pd \mathcal{H}$. Therefore we have
 $$\pd \mathcal{H} \leq \max \{ \pd (\mathcal{H}_{C_{\mu}}) , \pd (\mathcal{H},x_{F})\}=\pd \mathcal{H}_{C_{\mu}} \leq \pd \mathcal{H}.$$
 \end{proof}

\begin{remark}
As before, one can compute the projective dimension of an open cycle with extra edges either using  \Cref{LaundryList1} \ref{removeMeets} or above theorem. 
\end{remark}

\begin{remark}
It is natural to conjecture that given an open cycle, no matter how many higher dimensional edges are on the cycle the projective dimension of the hypergraph is the same as the projective dimension of the open cycle. There are more cases that one needs to consider for the proof of the conjecture. In particular, in \Cref{StringedgeAll} we have a case for strings where the projective dimension does not stay the same once a higher dimensional edge is removed. The proof of the case when the projective dimension jumped up by one requires a lot of steps. In light of this, new tools must be developed in order to prove the conjecture for cycles.  
\end{remark}

\end{document}